\theoremstyle{definition}
\newtheorem* {theorem*}{Theorem}
\theoremstyle{definition}
\newtheorem{theorem}{Theorem}[section]
\theoremstyle{definition}
\theoremstyle{definition}
\newtheorem{observation}{Observation}[section]
\theoremstyle{definition}
\theoremstyle{definition}
\newtheorem{lemma}{Lemma}[section]
\theoremstyle{definition}
\newtheorem{definition}{Definition}[section]
\theoremstyle{definition}
\theoremstyle{definition}
\newtheorem{proposition}{Proposition}[section]
\newtheorem{corollary}{Corollary}[section]
\newtheorem* {remark}{Remark}
\theoremstyle{definition}
\theoremstyle{definition}
\newtheorem* {remarks}{Remarks}
\numberwithin{equation}{section}
\newcommand{\Mat}{\mathrm{Mat}} 
\def\({\left(}
\def\){\right)}
 \newcommand{\FF}{\mathbb{F}}  \newcommand{\CC}{\mathbb{C}}      
   \newcommand{\cI}{\mathcal{I}}
\newcommand{\cC}{\mathcal{C}}
\def\tr{\mathrm{tr}}  \def\Hom{\mathrm{Hom}}  \def\ZZ{\mathbb{Z}}   \def\H{\mathcal{H}} \def\Ind{\mathrm{Ind}}  \def\Res{\mathrm{Res}}    \def\spanning{\textnormal{-span}}   
\def\Irr{\mathrm{Irr}}  \def\wt{\widetilde}
\def\diag{\mathrm{diag}}   
   \newcommand{\fkn}{\mathfrak{n}}
\def\fk{\mathfrak}
\def\barr{\begin{array}}
\def\earr{\end{array}}
\def\ba{\begin{aligned}}
\def\ea{\end{aligned}}
\def\be{\begin{equation}}
\def\ee{\end{equation}}
\def\ol{\widehat}
\def\olfkl{\ol{\fk l}}
\def\olfks{\ol{\fk s}}
\def\oll{\ol{L}}
\def\ols{\ol{S}}
 \def\GL{\textbf{GL}} 
\def\UT{\textbf{U}}
\def\fkt{\fk{u}}
\def\Arc{\mathrm{Arc}}
\newcommand\Bell[2]{\mathcal{B}_{#1}(#2)}
\newcommand\Cat[2]{\mathcal{N}_{#1}(#2)}
\newcommand\Fe[2]{\mathcal{F}_{#1}(#2)}
\newcommand\Del[2]{\mathcal{P}_{#1}(#2)}
\newcommand\preHe[2]{\mathcal{H}_{#1}(#2)}
\newcommand\He[2]{ \wt{\mathcal{H}}_{#1}(#2)}
\newcommand\preIn[2]{\mathcal{I}_{#1}(#2)}
\newcommand\In[2]{ \wt{\mathcal{I}}_{#1}(#2)}
\newcommand\AltBell[2]{\mathcal{B}^\hom_{#1}(#2)}
\newcommand\AltCat[2]{\mathcal{N}^\hom_{#1}(#2)}
\newcommand\AltDel[2]{\mathcal{P}^\hom_{#1}(#2)}
\newcommand\AltHe[2]{\mathcal{H}^\hom_{#1}(#2)}
\def\Lattice{\mathscr{L}}
\def\NC{\mathrm{N  C}}
\def\sK{\mathscr{L}_{\mathrm{Pell}}}
\def\sL{\mathscr{L}_{\mathrm{Heis}}}
\def\sLL{\wt{\mathscr{L}}_{\mathrm{Heis}}}
\def\uu{\uparrow\hspace{-1mm}\uparrow}
\def\ds{\displaystyle}
\def\sM{\mathscr{L}_{\mathrm{Inv}}}
\def\sMM{\wt{\mathscr{L}}_{\mathrm{Inv}}}
\def\D{D}
\def\H{D'}
\def\I{D''}
\def\shift{\text{\emph{shift}}}
\renewcommand{\@makefnmark}{\mbox{\textsuperscript{}}}
\begin{document}
\title{Heisenberg characters, unitriangular groups, and Fibonacci numbers}
\author{Eric Marberg
\footnote{This research was conducted with government support under the Department of Defense, Air Force Office of Scientific Research, National Defense Science and Engineering Graduate (NDSEG) Fellowship, 32 CFR 168a.} \\ Department of Mathematics \\ Massachusetts Institute of Technology \\ \tt{emarberg@math.mit.edu}
}
\date{}

\maketitle

\def\cX{\mathcal{X}}
\def\cY{\mathcal{Y}}

\def\SCh{\mathrm{SCh}}

\def\sP{\Pi}
\def\hom{\sigma}
\def\omdef{:}%\overset{\mathrm{def}}}

\setcounter{tocdepth}{2}

\newcommand\stirl[2]
%{S(#1,#2)}
{{ #1 \atopwithdelims \{\} #2}}
\newcommand\stirlstirl[2]
%{S_2(#1,#2)}
{\left\{ \hspace{-1.5mm}{ #1 \atopwithdelims \{\} #2} \hspace{-1.5mm}\right\}}

\newcommand\stirlb[2]
{{ #1 \atopwithdelims \{\} #2}_{\hspace{-0.5mm}\mathrm{B}}\hspace{0.5mm}}

\newcommand\stirlstirlX[2]
{\left\{ \hspace{-1.5mm}{ #1 \atopwithdelims \{\} #2} \hspace{-1.5mm}\right\}_{\hspace{-0.5mm}\mathrm{X}}\hspace{0.5mm}}
\newcommand\stirlstirlB[2]
{\left\{ \hspace{-1.5mm}{ #1 \atopwithdelims \{\} #2} \hspace{-1.5mm}\right\}_{\hspace{-0.5mm}\mathrm{B}}\hspace{0.5mm}}
\newcommand\stirlstirlD[2]
{\left\{ \hspace{-1.5mm}{ #1 \atopwithdelims \{\} #2} \hspace{-1.5mm}\right\}_{\hspace{-0.5mm}\mathrm{D}}\hspace{0.5mm}}

\newcommand\stst[2]
%{S_2(#1,#2)}
{\bigl\{ \hspace{-1.5mm}{ #1 \atopwithdelims \{\} #2} \hspace{-1.5mm}\bigr\}}

\newcommand\ststB[2]
%{S_2(#1,#2)}
{\bigl\{ \hspace{-1.5mm}{ #1 \atopwithdelims \{\} #2} \hspace{-1.5mm}\bigr\}_{\hspace{-0.5mm}\mathrm{B}}\hspace{0.5mm}}

\def\u{\uparrow}
\def\d{\nearrow}
\def\r{\to}

\def\Nat{\mathbb{N}}

\begin{abstract}
 Let $\UT_n(\FF_q)$ denote the group of unipotent $n\times n$ upper triangular matrices over a finite field with $q$ elements. 
%A character $\chi$ of a finite group $G$ is Heisenberg if $\chi$ is irreducible and $\ker \chi \supset [G,[G,G]]$. 
We show that the Heisenberg characters of $\UT_{n+1}(\FF_q)$ are indexed by lattice paths from the origin to the line $x+y=n$ using the steps $(1,0), (1,1), (0,1), (0,2)$,  which are labeled in a certain way by nonzero elements of $\FF_q$. In particular, we prove for $n\geq 1$ that the number of Heisenberg characters of $\UT_{n+1}(\FF_q)$ is a polynomial in $q-1$  with nonnegative integer coefficients and degree $n$, whose leading coefficient is the $n$th Fibonacci number.  Similarly, we find that the number of Heisenberg supercharacters of $\UT_n(\FF_q)$ is a polynomial in $q-1$ whose coefficients are Delannoy numbers and whose values give a $q$-analogue for the Pell numbers. By counting the fixed points of the action of a certain group of linear characters, we prove that the numbers of supercharacters, irreducible supercharacters, Heisenberg supercharacters, and Heisenberg characters of the subgroup 
 of $\UT_n(\FF_q)$ consisting of matrices whose superdiagonal entries sum to zero are likewise all polynomials in $q-1$ with nonnegative integer coefficients.
\end{abstract}
%
%
%\tableofcontents
%
%\newpage
\section{Introduction}

Let $\UT_n(\FF_q)$ denote the  \emph{unitriangular group}  of $n\times n$ upper triangular matrices with ones on the diagonal, over a finite field $\FF_q$ with $q$ elements.  This is a finite group with $q^{n(n-1)/2}$ elements and, writing $p$ for the prime characteristic of $\FF_q$,  a Sylow $p$-subgroup of  the general linear group $\GL_n(\FF_q)$.  Classifying the irreducible representations of $\UT_n(\FF_q)$ is a well-known wild problem (cf. the discussion in Section 2 of \cite{ADS}).  When $n<2p$, there is at least a method of constructing the irreducible representations from certain coadjoint orbits (see \cite{Sangroniz}), but even in this situation we have no way of enumerating the orbits in any accessible fashion.

Despite this, the representation theory of $\UT_n(\FF_q)$ engenders a surprising abundance of combinatorial structures--provided we refocus our attention on  irreducible representations to other  families.  
A prominent success story of this philosophy  comes from the theory of \emph{supercharacters}
introduced by Andr\'e \cite{Andre,Andre1}, simplified by Yan \cite{Yan}, and generalized by Diaconis and Isaacs \cite{DI}.
A brief overview of this subject goes as follows.

%Let $\fkt_n(\FF_q)$ denote the additive abelian group of $n\times n$ strictly upper triangular matrices and choose a nontrivial homomorphism $\theta : \FF_q^+\to \CC^\times$. %, or more generally, any
Let $\fkn$ denote a nilpotent finite-dimensional associative $\FF_q$-algebra, and write $G $ for the corresponding \emph{algebra group} of formal sums $1+X$ for $X \in \fkn$.  For example, one could take $\fkn$ to be the set $\fkt_n(\FF_q)$ of $n\times n$ strictly upper triangular matrices over $\FF_q$ so that $G = \UT_n(\FF_q)$.
Choose a nontrivial homomorphism $\theta : \FF_q^+\to \CC^\times$.
The elements of the dual space $\fkn^*$ of $\FF_q$-linear maps $\fkn\to \FF_q$ then index a basis for the group algebra $\CC G$, given by the elements
\[v_{\lambda}\omdef= \sum_{g\in G} \theta\circ\lambda(g-1)g\in \CC G,\qquad\text{for }\lambda \in \fkn^*.\] The \emph{supercharacters} of $G$ are defined as the characters of the left $G$-modules $\CC G v_\lambda$ for $\lambda \in \fkn^*$.   These characters are often reducible, but their irreducible constituents partition the set of all complex irreducible characters of the group $G$. 
In view of this and other nice properties, the set of supercharacters serves as a useful approximation to $\Irr(G)$.  In particular, the supercharacters of an algebra group form an important nontrivial example of what Diaconis and Isaacs call a \emph{supercharacter theory} of a finite group \cite{DI}.

%The same construction with $\fkn$ replaced by any nilpotent finite-dimensional associative $\FF_q$-algebra defines the supercharacters of the algebra group of formal sums $ \{ 1 +X : X \in \fkn\}$.

In contrast to the irreducible characters, the supercharacters of the untriangular group may be completely classified; viz.,  by a $q$-analogue of the set partitions of $[n]\omdef = \{ i \in \ZZ : 1\leq i \leq n\}$.
  %  For us, a \emph{set partition} refers formally to any set whose elements are nonempty, pairwise disjoint sets; the elements of a set partition are its \emph{blocks}. 
    Write $\Lambda \vdash[n]$ and say that $\Lambda$ is a \emph{partition} of $[n]$ if $\Lambda$ is a set partition the union of whose blocks is $[n]$.   
A pair $(i,j) \in [n] \times [n]$ is an \emph{arc} of $\Lambda \vdash [n]$ if $i$ and $j$ occur  in the same block of $\Lambda$ such that 
$j$ is the least element of the block greater than $i$. Let $\Arc(\Lambda)$ denote the set of arcs of $\Lambda$, so that for example
$\Lambda= \{ \{ 1,3\} ,\{2,4,6,7\}, \{5\} \}\vdash[7]$ has $\Arc(\Lambda) = \{ (1,3), (2,4), (4,6), (6,7)\}$.
%%Note that one may specify a set partition of $[n]$ by listing its arcs. 
A set partition $\Lambda\vdash[n]$ is then \emph{noncrossing} if no two arcs $(i,k),(j,l) \in \Arc(\Lambda)$ have $i<j<k<l$.  
The $q$-analogue indexing the supercharacters of $\UT_n(\FF_q)$ is now defined thus:
%We shall index the supercharacters of $\UT_n(\FF_q)$ by set partitions labeled in the sense of the following definition.

\begin{definition}
An \emph{$\FF_q$-labeled partition} of $[n]$ is a set partition $\Lambda \vdash[n]$ with a map $\Arc(\Lambda)\to \FF_q^\times$.  We denote the image of $(i,j) \in \Arc(\Lambda)$ under this map by $\Lambda_{ij}$.%, and write $\sP(n,\FF_q)$ for the set of $\FF_q$-labeled partitions of $[n]$.
\end{definition}

Let $\sP(n,\FF_q)$ and $\NC(n,\FF_q)$ denote the sets of ordinary and noncrossing $\FF_q$-labeled set partitions of $[n]$. 
We may view $\sP(n,\FF_q)$ as a subset of $\fkt_n(\FF_q)^*$ by defining \be\label{ide} \Lambda (X) \omdef =  \sum_{(i,j)\in\Arc(\Lambda)} \Lambda_{ij} X_{ij},\qquad\text{for $\Lambda \in \sP(n,\FF_q)$ and $X \in \fkt_n(\FF_q)$}.\ee
Here $X_{ij}$ denotes the entry located in the $i^{\mathrm{th}}$ row and $j^{\mathrm{th}}$ column of the matrix $X$.
Write $\chi_\Lambda$ for the supercharacter of $\UT_n(\FF_q)$ corresponding to the module generated by $v_\Lambda \in \CC \UT_n(\FF_q)$.
The following remarkable result derives from the work of Andr\'e \cite{Andre,Andre1} and Yan \cite{Yan}:
the correspondence $\Lambda \mapsto \chi_\Lambda$ is a bijection 
\be\label{bell-thm} \ba \sP(n,\FF_q) &\to
  \{ \text{Supercharacters of $\UT_n(\FF_q)$} \},\\
\NC(n,\FF_q) &\to
  \{ \text{Irreducible supercharacters of $\UT_n(\FF_q)$} \}.
  \ea
  \ee 
Hence the number of supercharacters of $\UT_n(\FF_2)$ is the $n$th Bell number and the number of irreducible supercharacters is the $n$th Catalan number.  In addition, there is a useful closed form formula describing the values of $\chi_\Lambda$ in terms of the partition $\Lambda$; see \cite[Eq.\ 2.1]{Thiem}.

Seeking to emulate this combinatorial classification, in this work we study the Heisenberg characters of $\UT_n(\FF_q)$. Recall, for example from \cite[Appendix B]{BoyDrin}, that a character $\chi$ of a group $G$ is \emph{Heisenberg} if $\chi$ is irreducible and $\ker \chi \supset [G,[G,G]]$.  Of course, one naturally identifies the Heisenberg characters of $G$ with the irreducible characters of the quotient $G/[G,[G,G]]$.
%Equivalently, an irreducible character $\chi$ is Heisenberg if $G/Z(\chi)$ is abelian, where $Z(\chi)\omdef = \{ g \in G : |\chi(g)| = \chi(1)\}$ denotes the character's center.  
The Heisenberg characters of $\UT_n(\FF_q)$ and related groups take the role usually played by linear characters in several inductive constructions introduced in \cite{BoyDrin}, which provides some motivation for examining such objects.
We also consider  characters of $\UT_n(\FF_q)$ which are invariant under multiplication by the  subgroup of linear characters
$C \omdef= \left\{ \vartheta \circ \hom : \vartheta \in \Hom\(\FF_q^+, \CC^\times\) \right\}$,
 where $\hom : \UT_n(\FF_q)\to \FF_q$ is the homomorphism which assigns to $g$   the sum of its entries on the first superdiagonal: \[\hom (g)\omdef = g_{1,2} + g_{2,3} + \dots+ g_{n-1,n},\qquad\text{for $g \in \UT_n(\FF_q)$}.\]
Our investigation of  $C$-invariant characters will allow us to count various families of characters of the interesting subgroup $\UT^\hom_n(\FF_q) \omdef =\{ g \in \UT_n(\FF_q) : \hom(g) = 0\}$, whose supercharacters are described in \cite[Example 5.1]{M_normal}.

Our main result is a classification of the Heisenberg characters of $\UT_n(\FF_q)$ in terms of certain labeled lattice paths.
Fix a subset $S \subset \Nat^2$, where $\Nat = \{0,1,2,\dots\}$ is the set of nonnegative integers.  
Formally, a \emph{lattice path} with steps in $S$ shall refer to a finite sequence $P = (s_1,s_2,\dots,s_k)$ of elements of $S$.  
The sequence $P$ represents the path which begins at $(0,0)$ and travels to $s_1+s_2+\dots + s_t \in \Nat^2$ at time $t \in [k]$, ending at the point $(a,b) = s_1+s_2+\dots +s_k$.
The most obvious way of labeling a lattice path would be to assign labels to each step, but instead we require the following slightly more subtle definition.

\begin{definition}
An \emph{$\FF_q$-labeled lattice path} is a lattice path with a rule assigning to each step $(i,j)$ in the path a sequence of labels $(x_1,\dots,x_j) \in (\FF_q^\times)^j$.
Thus, the number of nonzero labels assigned to a step is the step's height, and we consider steps of the form $(i,0)$ to be unlabeled.    
 \end{definition}

For each integer $n$, let $\Lattice(n,\FF_q,S)$ be  the set of $\FF_q$-labeled lattice paths with steps in $S\subset\Nat^2$ which end on the line $\{ (x,y) : x+y=n-1\}$.
 As special cases, we define
\begin{enumerate}
\item[] $\sK(n,\FF_q) \omdef=\Lattice\(n,\FF_q,S\)$ where $S = \{ (1,0),(1,1),(0,1)\}$,
\item[] $\sL(n,\FF_q) \omdef=\Lattice\(n,\FF_q,S'\)$ where $S'=\{ (1,0),(1,1),(0,1),(0,2)\}$,
\item[] $\sM(n,\FF_q) \omdef=\Lattice\(n,\FF_q,S''\)$ where $S''=\{ (2,1),(1,2),(0,1)\}.$
\end{enumerate}
We also define two modified families of labeled lattice paths 
\begin{enumerate}
\item[] $\sLL(n,\FF_q) \omdef= \Big\{\text{Paths in $\sL(n,\FF_q)$ which do not begin with the step $(0,2)$}\Big\}$,
\item[]
$\sMM(n,\FF_q) \omdef= \Big\{\text{Nonempty paths in $\sM(n,\FF_q)\cup\sM(n-1,\FF_q)$ beginning with $(0,1)$}\Big\}$.
\end{enumerate}
The relevance of these definitions derives from the following result, which we prove as Theorems \ref{heis-thm}, \ref{del-thm}, and \ref{c-heis-thm} below:

\begin{theorem}\label{thm1}
For each integer $n\geq 1$ there are bijections
\begin{enumerate}
\item[(1)] $\sK(n,\FF_q) \leftrightarrow \{\text{Heisenberg supercharacters of $\UT_n(\FF_q)$} \}$;
\item[(2)] $\sLL(n,\FF_q) \leftrightarrow \{\text{Heisenberg characters of $\UT_n(\FF_q)$} \}$;
\item[(3)] $\sMM(n,\FF_q) \leftrightarrow \{\text{$C$-invariant Heisenberg characters of $\UT_{n+1}(\FF_q)$} \}$.
\end{enumerate}
\end{theorem}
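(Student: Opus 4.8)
The plan is to reduce every part to the two-step nilpotent quotient on which Heisenberg characters live, and then to parametrize its irreducibles by the symplectic data of a family of tridiagonal alternating forms. First I would compute the lower central series of $G=\UT_n(\FF_q)$. Writing $\fkt_n^{(k)}$ for the span of the matrix units $E_{ij}$ with $j-i\geq k$, a short computation with elementary matrices (using that a group commutator $[1+sE_{ij},1+tE_{kl}]$ lies in $1+\FF_q E_{il}$ when $j=k$) gives $[G,G]=1+\fkt_n^{(2)}$ and $[G,[G,G]]=1+\fkt_n^{(3)}$. Hence a character is Heisenberg exactly when it is an irreducible character of $H:=G/(1+\fkt_n^{(3)})$, a group of nilpotency class at most two in which the image $Z$ of $1+\fkt_n^{(2)}$ (the second superdiagonal) is central and equals $[H,H]$. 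Concretely $H$ is generated by the superdiagonal one-parameter subgroups $x_1,\dots,x_{n-1}\cong\FF_q$ subject to $[x_i,x_{i+1}]=y_i$ (a generator of the $i$th central factor of $Z$) and $[x_i,x_j]=1$ for $|i-j|\geq 2$; thus $H$ is the class-two group attached to the path graph on $n-1$ vertices.

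For part (1) I would identify the Heisenberg supercharacters as the irreducible supercharacters that are trivial on $1+\fkt_n^{(3)}$. Using the closed form for supercharacter values \cite[Eq.~2.1]{Thiem} together with the bijection \eqref{bell-thm}, the goal is to show that $\chi_\Lambda$ is trivial on $1+\fkt_n^{(3)}$ precisely when every arc of $\Lambda$ has length at most $2$; combined with noncrossingness this pins down the indexing set. The bijection with $\sK(n,\FF_q)$ then comes from scanning $1,\dots,n$ from left to right under a local rule that converts the three admissible local configurations of $\Lambda$ into the three step types: a point starting no arc gives $(1,0)$, a length-one arc gives $(0,1)$ carrying its label, and a length-two arc gives $(1,1)$ carrying its label. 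Noncrossingness makes the scan well defined and forces the terminal point onto the line $x+y=n-1$.

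Part (2) is the crux, since a general Heisenberg character need not be a supercharacter and one must genuinely leave the supercharacter combinatorics. Here I would classify $\Irr(H)$ directly by Clifford theory over the central subgroup $Z$. For each $\zeta\in\widehat{Z}\cong\FF_q^{\,n-2}$ with coordinates $(c_1,\dots,c_{n-2})$, the commutator pairing descends to the alternating form $B_\zeta$ on $\FF_q^{\,n-1}$ whose only nonzero entries are $\pm c_i$ in positions $(i,i+1)$. This form is tridiagonal, so its $\rank$ and radical are determined run-by-run by the pattern of nonzero $c_i$ (the radical gains one dimension from each even-length run of nonzero entries and from each index isolated between two vanishing $c_i$). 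By Stone--von Neumann the characters of $H$ lying over $\zeta$ are indexed by the characters of the radical of $B_\zeta$ and all have degree $q^{\rank B_\zeta/2}$. Summing over $\zeta$ and the radical characters should produce the bijection with $\sL(n,\FF_q)$, where each ``on'' edge $c_i\neq 0$ and each radical character contributes labels in $\FF_q^\times$; the new step $(0,2)$, which carries two labels, records exactly the residual radical degree of freedom that an even run or isolated index supplies, while an initial such contribution is degenerate and is removed by forbidding an opening $(0,2)$, yielding $\sLL(n,\FF_q)$. The main obstacle is precisely this passage: controlling the rank and radical of the tridiagonal form run-by-run and upgrading the resulting enumeration into an explicit, label-respecting bijection rather than a mere count.

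For part (3) I would analyze the action of $C=\{\vartheta\circ\hom\}\cong\widehat{\FF_q^+}$ on the Heisenberg characters of $\UT_{n+1}(\FF_q)$ by $\chi\mapsto\chi\cdot(\vartheta\circ\hom)$. In the class-two quotient $H$ (now with $A=\langle x_1,\dots,x_n\rangle\cong\FF_q^{\,n}$) the homomorphism $\hom$ is the linear functional $\sum_i\mathrm{coord}_i$ on $A$ and is trivial on $Z$, so tensoring by $\vartheta\circ\hom$ fixes each central character $\zeta$ and merely translates the radical-character datum by the restriction of $\hom$ to the radical $R_\zeta$ of $B_\zeta$. Hence a Heisenberg character over $\zeta$ is $C$-invariant iff $\hom$ vanishes on $R_\zeta$, i.e.\ iff the all-ones vector $\mathbf{1}\in\FF_q^{\,n}$ lies in $R_\zeta^{\perp}=\mathrm{im}\,B_\zeta$. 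The plan is to show that this linear-algebra condition translates, again run-by-run, into the defining constraint of $\sMM(n,\FF_q)$: nonempty paths in $\sM(n,\FF_q)\cup\sM(n-1,\FF_q)$ beginning with $(0,1)$, whose step set $\{(2,1),(1,2),(0,1)\}$ reflects that edges must now be grouped so that $\mathbf{1}$ lands in the column span of $B_\zeta$. Counting these fixed points as a polynomial in $q-1$ is then expected to yield the Delannoy coefficients and Pell values; the obstacle is translating the solvability condition $\mathbf{1}\in\mathrm{im}\,B_\zeta$ into the exact grouping of steps and handling the index shift between $n$ and $n+1$ recorded by the union $\sM(n,\FF_q)\cup\sM(n-1,\FF_q)$.
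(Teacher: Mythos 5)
Your reduction to the class-two quotient $H=\UT_n(\FF_q)/(1+\fkt_n(\FF_q)^3)$ and the Stone--von Neumann analysis over its center is a genuinely different route from the paper's, and for parts (1) and (2) it is sound. The paper never leaves the algebra-group framework: it first shows every Heisenberg character equals one of the induced characters $\xi_\lambda$ of Section \ref{xi} (Proposition \ref{heis-cor}, which leans on the quasi-monomial functionals of \cite{M_X}), then parametrizes the relevant $\lambda$ by block decompositions with blocks of types (a), (b), (c), with two coadjoint-orbit lemmas (Lemmas \ref{tech-lem1} and \ref{tech-lem}) supplying the linear algebra. Your parametrization is equivalent to the paper's set $\cX$: your central character $\zeta=(c_1,\dots,c_{n-2})$ is the second-superdiagonal data of $\lambda$, your runs of nonzero $c_i$ are the type (b)/(c) blocks, your isolated indices are the type (a) blocks, and your radical-character datum supplies exactly the free first-superdiagonal entries distinguishing types (a) and (c). Your run-by-run radical formula (one dimension per even-length run and per isolated index) is correct, and it is what those two lemmas encode. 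What your route buys is self-containedness --- no appeal to \cite{M}, \cite{M_X}, \cite{M_C} --- and the degree formula $q^{\rank B_\zeta/2}$ of Corollary \ref{deg-cor} comes for free. Two small repairs are needed: in your part (1) scan, a point lying \emph{under} a length-two arc must emit no step (otherwise a path with $c$ long arcs ends on $x+y=n-1+c$, not $x+y=n-1$); and the irreducibles over $\zeta$ form a torsor under the character group of the radical rather than being canonically indexed by it, so the path labels must be pinned down by a choice of splitting --- harmless, but it is exactly what the paper's functional coordinates do.

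Part (3) is where there is a genuine gap. Your invariance criterion is right: tensoring by $\vartheta\circ\hom$ fixes $\zeta$ and translates the extension datum by the restriction of $\vartheta\circ\hom$ to the radical, so a character over $\zeta$ is $C$-invariant if and only if $\hom$ kills the radical, equivalently $\mathbf{1}\in\mathrm{im}\,B_\zeta$; in particular invariance forces the support pattern of $\zeta$ to have no isolated index, so $\zeta$ determines a composition of $n$ into parts at least $2$. But the proposal stops exactly where the work begins. You still need (a) for each odd-span run of length $m$, the count of tuples in $(\FF_q^\times)^{m-1}$ for which the radical vector has coordinate sum zero --- this is Lemma \ref{tech-lem1}(3), proved by a nontrivial recursion and equal to $\frac{1}{q}\left((q-1)^{m-1}-(-1)^{(m-1)/2}(q-1)^{(m-1)/2}\right)$ --- and (b) the identity that the resulting sum over compositions (Lemma \ref{compos}) equals $\In{n}{q-1}=|\sMM(n,\FF_q)|$. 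For (b) your stated plan is to translate $\mathbf{1}\in\mathrm{im}\,B_\zeta$ run-by-run into the step-grouping of $\sMM(n,\FF_q)$; this is precisely what the paper says it was \emph{unable} to do (``We do not know of a combinatorial bijection\dots''), and it instead proves equinumerosity by showing both quantities satisfy the recurrence $a_n=(q-1)a_{n-1}+q(q-1)a_{n-3}$ (proof of Theorem \ref{c-heis-thm}). Without either that recurrence argument or the direct translation you hope for, part (3) remains unproved. Finally, a misstatement to correct: Delannoy coefficients and Pell values govern part (1) (the polynomial $\Del{n}{x}$ counting $\sK(n,\FF_q)$), not part (3), which is governed by $\In{n}{x}$ and the step set $\{(2,1),(1,2),(0,1)\}$.
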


Our proof of this theorem derives essentially from the method of decomposing the supercharacters of an algebra group described in \cite{M}.
We compute from these correspondences explicit polynomial formulas for the numbers of Heisenberg supercharacters, Heisenberg characters, and $C$-invariant Heisenberg characters of the unitriangular group. 
Notably, the number of Heisenberg supercharacters of $\UT_n(\FF_q)$ is a polynomial in $q-1$ whose coefficients are  Delannoy numbers and whose value at $q=2$ is the $n$th Pell number.  Concerning the Heisenberg characters of $\UT_n(\FF_q)$, we have the following theorem:

%Let $\{f_n\}_{n=0}^\infty = (0,1,1,2,3,5,8,\dots) $ denote the Fibonacci numbers.  

\begin{theorem}\label{thm2}
For integers $n\geq 1$ and $e\geq 0$,
the number of Heisenberg characters of $\UT_{n+1}(\FF_q)$ 
with degree $q^e$ is 
\[  q^{n-e-1}\(\binom{n-e}{e}(q-1)^e + \binom{n-e-1}{e}(q-1)^{e+1}\).\]
Hence the number of all Heisenberg characters of $\UT_{n+1}(\FF_q)$ is a polynomial in $q-1$ with nonnegative integer coefficients and degree $n$, whose leading coefficient is the $n$th Fibonacci number.
\end{theorem}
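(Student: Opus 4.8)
The plan is to establish the degree-refined count first, since that is the substance, and then read off the ``Hence'' sentence by a routine substitution. By Theorem~\ref{thm1}(2) the Heisenberg characters of $\UT_{n+1}(\FF_q)$ are in bijection with the labeled paths in $\sLL(n+1,\FF_q)$, i.e.\ with $\FF_q$-labeled lattice paths on steps $\{(1,0),(1,1),(0,1),(0,2)\}$ that end on the line $x+y=n$ and do not begin with $(0,2)$. The first thing I would extract from the construction underlying that bijection is the degree dictionary: a path $P$ corresponds to a character of degree $q^{e(P)}$, where $e(P)$ is the number of steps of $P$ equal to $(1,1)$ or $(0,2)$. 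Granting this, counting the degree-$q^e$ Heisenberg characters amounts to summing $(q-1)^{h(P)}$ over all unlabeled shapes $P$ with $e(P)=e$, where the total height $h(P)$ (equal to the $y$-coordinate of the endpoint) is exactly the number of $\FF_q^\times$-labels carried by $P$.

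The cleanest way to organize this sum is through a generating function in which $z$ marks the length $x+y$ of the endpoint (so that the coefficient of $z^n$ selects $\UT_{n+1}(\FF_q)$), $u$ marks the exponent $e(P)$, and the labeling weight $(q-1)^{h(P)}$ is built into the coefficients. Weighting the four steps by $z$, $z^2u(q-1)$, $z(q-1)$, $z^2u(q-1)^2$ respectively, and using that a path is just a finite sequence of steps, I would compute the one-step generating function $qz+qu(q-1)z^2$ and hence, after deleting the sequences that begin with a $(0,2)$ step, conclude that the generating function whose coefficient of $z^nu^e$ is the number of degree-$q^e$ Heisenberg characters of $\UT_{n+1}(\FF_q)$ equals
\[ \frac{1-u(q-1)^2z^2}{\,1-qz-qu(q-1)z^2\,}. \]
Expanding $\bigl(1-qz-qu(q-1)z^2\bigr)^{-1}$ as a geometric series in $qz+qu(q-1)z^2$ and reading off the coefficient of $z^nu^e$ produces
\[ \binom{n-e}{e}q^{\,n-e}(q-1)^e-\binom{n-e-1}{e-1}q^{\,n-e-1}(q-1)^{e+1}, \]
which Pascal's rule $\binom{n-e}{e}=\binom{n-e-1}{e}+\binom{n-e-1}{e-1}$ converts into the asserted expression $q^{\,n-e-1}\bigl(\binom{n-e}{e}(q-1)^e+\binom{n-e-1}{e}(q-1)^{e+1}\bigr)$.

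For the final sentence I would substitute $t=q-1$, so that each summand becomes $\binom{n-e}{e}(1+t)^{\,n-e-1}t^{e}+\binom{n-e-1}{e}(1+t)^{\,n-e-1}t^{e+1}$. Every factor $(1+t)^{\,n-e-1}$ appearing with a nonzero binomial coefficient has $n-e-1\geq 0$, hence expands with nonnegative integer coefficients, so the total number of Heisenberg characters is manifestly a polynomial in $q-1$ with nonnegative integer coefficients. The two families of summands have $t$-degrees $n-1$ and $n$, so the polynomial has degree $n$, and the coefficient of $t^n$ is fed only by the terms $\binom{n-e-1}{e}(1+t)^{\,n-e-1}t^{e+1}$, each contributing $\binom{n-e-1}{e}$; summing over $e$ gives $\sum_{e\geq 0}\binom{n-1-e}{e}$, the standard binomial expression for the $n$th Fibonacci number.

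The main obstacle is the very first step: pinning down the degree dictionary $e(P)=\#\{(1,1)\text{- and }(0,2)\text{-steps}\}$ requires tracing the character degree through the decomposition of \cite{M} that yields the bijection of Theorem~\ref{thm1}(2), rather than through the lattice-path combinatorics itself. Once that dictionary is in hand the rest is bookkeeping: the geometric-series expansion, the single application of Pascal's rule, and the Fibonacci evaluation are all routine. A secondary point deserving care is the boundary condition defining $\sLL$ (paths not beginning with $(0,2)$): it is precisely what contributes the numerator factor $1-u(q-1)^2z^2$, and hence, after Pascal's rule, the second binomial term—omitting it would overcount exactly the characters attached to paths with a leading $(0,2)$ step.
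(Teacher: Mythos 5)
Your proposal is correct in its enumerative and polynomial bookkeeping, but it has a genuine gap precisely at the step you yourself flag: the degree dictionary $\xi_P(1)=q^{e(P)}$, where $e(P)$ counts the $(1,1)$- and $(0,2)$-steps of $P$, is \emph{not} available from Theorem~\ref{thm1}(2), which is a pure bijection carrying no degree information. In the paper this dictionary is a separate result, Corollary~\ref{deg-cor}(1), and its proof is the representation-theoretic heart of Theorem~\ref{thm2}: one reduces to a single block of the block decomposition of $\lambda_P$ using the factorization $\xi_P=\prod_i\xi_{P_i}$ established inside the proof of Theorem~\ref{heis-thm} via \cite[Lemma 2.1]{M_C}; one then computes the coadjoint orbit sizes of functionals supported on the second superdiagonal (Lemmas~\ref{tech-lem1} and~\ref{tech-lem}, which require solving the tridiagonal linear system~(\ref{linsys})); and finally one converts orbit size into degree through the Kirillov-function formula~(\ref{xi-formula}), which gives $\xi_\lambda(1)=\sqrt{|\lambda^G|}$ for irreducible $\xi_\lambda$. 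Writing ``granting this'' defers exactly the content that distinguishes a theorem about character degrees from a count of lattice paths, so a complete proof must either reproduce this computation or explicitly invoke Corollary~\ref{deg-cor}(1) --- and that corollary, together with Corollary~\ref{fib}, \emph{is} the paper's proof of Theorem~\ref{thm2}.

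Modulo that gap, the rest of your argument is sound and is essentially the paper's enumeration in generating-function clothing. The paper counts directly: there are $(q-1)^eq^e$ labeled paths on the steps $(1,1),(0,2)$ reaching the line $x+y=2e$, then $\binom{n-e}{e}q^{n-2e}$ ways to interleave the labeled steps $(1,0),(0,1)$, and finally one subtracts the $\binom{n-e-1}{e-1}(q-1)^{e+1}q^{n-e-1}$ paths beginning with $(0,2)$; your coefficient extraction $\binom{n-e}{e}q^{n-e}(q-1)^e-\binom{n-e-1}{e-1}q^{n-e-1}(q-1)^{e+1}$ from the rational function $\bigl(1-u(q-1)^2z^2\bigr)/\bigl(1-qz-qu(q-1)z^2\bigr)$ is the same count, and you apply the same instance of Pascal's rule to reach the stated form. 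Your closing deduction --- nonnegativity by expanding $q^{n-e-1}=(1+(q-1))^{n-e-1}$, degree $n$ coming only from the second family of summands, and leading coefficient $\sum_{e\geq 0}\binom{n-1-e}{e}=f_n$ --- is likewise correct and agrees with the paper's Corollaries~\ref{He-formulas} and~\ref{fib}.
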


Finally, using Clifford theory with these classifications, we are able to produce formulas proving  following result, given below as Theorem \ref{alt-thm}.

\begin{theorem}\label{thm3}
The numbers of supercharacters, irreducible supercharacters, Heisenberg supercharacters, and Heisenberg characters of the subgroup $\UT^\hom_n(\FF_q) \subset \UT_n(\FF_q)$ are polynomials in $q-1$ with nonnegative integer coefficients. %for $n\geq 2$.
\end{theorem}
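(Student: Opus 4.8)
The plan is to prove that each of the four enumerating quantities associated to $\UT^\hom_n(\FF_q)$ is a polynomial in $q-1$ with nonnegative integer coefficients by using Clifford theory to express the desired count of characters of the subgroup in terms of $C$-invariant and $C$-noninvariant characters of the ambient group $\UT_n(\FF_q)$, and then invoking the polynomiality results already established for those ambient families. First I would fix notation: $\UT^\hom_n(\FF_q)$ is the kernel of the superdiagonal-sum homomorphism $\hom : \UT_n(\FF_q) \to \FF_q^+$, so it is a normal subgroup of index $q$ with cyclic quotient $\UT_n(\FF_q)/\UT^\hom_n(\FF_q) \cong \FF_q^+$. The group of linear characters $C = \{ \vartheta \circ \hom : \vartheta \in \Hom(\FF_q^+, \CC^\times)\}$ is precisely the group of linear characters of $\UT_n(\FF_q)$ that are trivial on $\UT^\hom_n(\FF_q)$, and $|C| = q$.

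The key tool is that, for a normal subgroup $N \trianglelefteq G$ of prime-power index $q$ with $G/N \cong \FF_q^+$ abelian, the characters of $N$ are governed by the action of $C \cong \widehat{G/N}$ on $\Irr(G)$ by tensoring. By Clifford theory, each irreducible character $\chi$ of $G$ restricts to $N$ either as a single irreducible character (when the inertia/stabilizer of the constituents is all of $G$, equivalently when $\chi$ is \emph{not} $C$-invariant) or as a sum of $q$ distinct $G$-conjugate irreducible characters; conversely, the $C$-invariant characters of $G$ each split into $q$ distinct irreducibles of $N$ upon restriction. The counting identity I would extract is the standard Brauer-type fixed-point formula: the number of irreducible characters of $N$ invariant under the induced $G$-action equals the number of fixed points of $C$ acting on $\Irr(G)$, and more usefully, the total number of irreducible characters of $N$ can be written as a combination of $|\Irr(G)|$ and the number of $C$-fixed (i.e. $C$-invariant) characters of $G$. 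Concretely, I would establish that for each of the four families (all supercharacters, irreducible supercharacters, Heisenberg supercharacters, Heisenberg characters), the count for $\UT^\hom_n(\FF_q)$ equals
\[
\frac{1}{q}\Bigl( |\mathcal{X}(G)| - |\mathcal{X}(G)^C|\Bigr) + q\,|\mathcal{X}(G)^C|,
\]
where $\mathcal{X}(G)$ denotes the relevant family in $G = \UT_n(\FF_q)$ and $\mathcal{X}(G)^C$ its $C$-invariant members; the first term counts characters of $N$ arising from $C$-orbits of size $q$ (each such orbit restricting to one irreducible of $N$), and the second counts the $q$-fold splittings of $C$-invariant characters. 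One must check that this bookkeeping is compatible with the supercharacter theory of $\UT^\hom_n(\FF_q)$ from \cite[Example 5.1]{M_normal}, so that the supercharacter and irreducible-supercharacter counts genuinely match the Clifford-theoretic formula, and similarly that the Heisenberg condition $\ker \supset [G,[G,G]]$ is preserved appropriately under restriction and induction.

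Given such a formula, polynomiality is immediate: $|\mathcal{X}(G)|$ is a polynomial in $q-1$ (these are the ordinary counts, known or computable from the bijections of Theorem \ref{thm1} and Theorem \ref{thm2}), and $|\mathcal{X}(G)^C|$ is likewise a polynomial in $q-1$ — for the Heisenberg-character case this is exactly the content of part (3) of Theorem \ref{thm1}, giving the bijection with $\sMM(n,\FF_q)$, from which a polynomial count follows, and the analogous $C$-invariant supercharacter counts follow from the same lattice-path/Clifford analysis. The division by $q$ and the extra factors of $q$ then recombine into a polynomial in $q-1$; the nonnegativity of the coefficients is the delicate point, since a naive expansion of $\frac{1}{q}(\,\cdots\,)$ need not manifestly have nonnegative coefficients. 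I expect the main obstacle to be precisely this: verifying that after the division by $q$ the resulting polynomial in $q-1$ has \emph{nonnegative} integer coefficients. The cleanest route around this is not to compute the difference and divide, but rather to produce a direct combinatorial model — a set of labeled lattice paths (or a suitable subfamily obtained by the same reduction used for $\sMM$) that biject with each family of characters of $\UT^\hom_n(\FF_q)$ — so that the polynomial appears from the outset as a positive sum over path-shapes of monomials $(q-1)^k$ with nonnegative coefficients, making nonnegativity automatic and sidestepping the apparent cancellation entirely.
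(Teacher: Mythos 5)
Your overall strategy---Clifford theory for the index-$q$ normal subgroup $\UT^\hom_n(\FF_q)$, organized around the $C$-action on characters of $\UT_n(\FF_q)$---is exactly the paper's (its Lemma \ref{alt-lem}), but your uniform counting formula $\frac{1}{q}\bigl(|\mathcal{X}(G)|-|\mathcal{X}(G)^C|\bigr)+q\,|\mathcal{X}(G)^C|$ is correct for only one of the four families, the Heisenberg characters. The splitting behavior under restriction is \emph{not} uniform: a $C$-invariant supercharacter of $\UT_n(\FF_q)$ restricts to a \emph{single} (generally reducible) supercharacter of $\UT^\hom_n(\FF_q)$, so for supercharacters the second term must have coefficient $1$, not $q$ (the paper's $\AltBell{n+1}{x}=\tfrac{1}{x+1}(\Bell{n+1}{x}-\Fe{n}{x})+\Fe{n}{x}$). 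Worse, a $C$-invariant \emph{irreducible} supercharacter restricts to a reducible supercharacter of the subgroup, whose $q$ irreducible constituents are not supercharacters of $\UT^\hom_n(\FF_q)$ at all (each irreducible is a constituent of exactly one supercharacter); hence for irreducible supercharacters and Heisenberg supercharacters the invariant characters contribute \emph{nothing}, and the correct counts are $\tfrac{1}{q}\bigl(\Cat{n+1}{q-1}-|\mathcal{X}^C|\bigr)$ and $\tfrac{1}{q}\bigl(\Del{n+1}{q-1}-|\mathcal{X}^C|\bigr)$ with no second term. Only Heisenberg characters, being irreducible by definition, genuinely split into $q$ distinct Heisenberg characters when $C$-invariant, which is where your formula (and the paper's $\AltHe{n+1}{x}$) applies.

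The second gap is that you correctly identify nonnegativity after dividing by $q$ as the delicate point, but then defer it to a hoped-for direct bijection with labeled lattice paths; this is not available---the paper explicitly states it does not know such a bijection even for the $C$-invariant Heisenberg characters versus $\sMM(n,\FF_q)$ (it only proves equinumerosity via a recurrence). The paper's actual resolution is algebraic: the identities in Eq.\ (\ref{alt-eq}) and Proposition \ref{preHe-formulas} express $\Bell{n+1}{x}$, $\Cat{n+1}{x}$, $\Del{n+1}{x}$, $\preHe{n+1}{x}$ as nonnegative integer combinations of monomials $x^k(x+1)^m$, so that subtracting the invariant count removes precisely the terms with $m=0$ and the division by $q=x+1$ lowers each remaining exponent $m$ by one, leaving a visibly nonnegative expansion (together with the divisibility of $\He{n+1}{x}$ and $\In{n}{x}$ by $x+1$ for $n\geq 4$ in the Heisenberg case). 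Without either this algebraic mechanism or an actual bijection, your proposal does not yet yield the nonnegativity assertion of the theorem.
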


We organize this article as follows.  In Section \ref{prelim-sect} we discuss in slightly greater depth the supercharacters of an algebra group, showing in particular how our brief definition above coincides with the standard notions developed in \cite{Andre,Andre1,DI,Yan}.  We then review the method of decomposing supercharacters into constituents described in \cite{M}, and prove in Section \ref{prep-sect} that such constituents exhaust all Heisenberg characters of $\UT_n(\FF_q)$.  In Sections \ref{heis-sect} and \ref{c-heis-sect} we classify the Heisenberg characters of $\UT_n(\FF_q)$, in particular proving Theorems \ref{thm1} and \ref{thm2}. Finally, in Section \ref{alt-sect}, we count the supercharacters, irreducible supercharacters, Heisenberg supercharacters, and Heisenberg characters of $\UT^\hom_n(\FF_q)$ in order to prove Theorem \ref{thm3}.

\section{Preliminaries}
\label{prelim-sect}

%Throughout, $q$ is a prime power greater than one, $\FF_q$ is a finite field with $q$ elements, and $\FF_q^\times$ is the multiplicative group of nonzero elements of $\FF_q$.  

Here we review some of the properties of the supercharacters mentioned in the introduction.

\subsection{Supercharacters of algebra groups}\label{superchar-defs}

Our reference for the following statements is \cite{DI}.
As in the introduction, let $\fkn$ be a nilpotent $\FF_q$-algebra which is associative and finite-dimensional, and   write $G = 1+\fkn$ for the corresponding {algebra group} of formal sums $1+X $ with $X \in \fkn$.
Recall  that 
 $\theta : \FF_q^+ \to \CC^\times$ is a fixed nontrivial homomorphism.  
 
 The distinct irreducible characters of the additive group of $\fkn$ are the functions $\theta\circ \lambda$ with $\lambda$ ranging over all elements of the dual space $\fkn^*$.  Hence, if we define   $\theta_\lambda : G \to \CC^\times$ for $\lambda \in \fkn^*$ by
\[ \theta_\lambda(1+X) \omdef = \theta\circ\lambda(X)\text{ for }X \in \fkn\]
%\qquad\text{so that}\qquad v_\lambda =\sum_{g \in G} \theta_{\lambda}(g)g \in \CC G.\]
%The complex conjugation in the right hand definition is a notational convention which simplifies some later formulas.  
then the maps $\{ \theta_\lambda : \lambda \in \fkn^* \}$ form an orthonormal basis with respect to the usual $L^2$-inner product $\langle \cdot,\cdot\rangle$ of the vector space of functions $G \to \CC$.  
Observe that in the notation of the introduction we have $v_\lambda =\sum_{g \in G} \theta_{\lambda}(g)g \in \CC G$.
%Thus, in the notation of the introduction, $v_\lambda$ is the precisely the element $v_\psi$ with $\psi =\overline{\theta\circ\lambda} = \theta\circ(-\lambda) \in \Irr(\fkn)$. 
%
%as the function $\theta_\lambda(1+X) = \theta\circ \lambda(X)$, and let $v_\lambda \in \CC G$ be the element of the group algebra 
%$v_\lambda =\sum_{g \in G} \overline{\theta_{\lambda}(g)}g$. 
%In particular, the elements $v_\lambda$ provide a basis for $\CC G$.  

\begin{definition} \label{sch-def}
%In the notation of the introduction we have $v_\lambda = v_{\theta\circ\lambda}$. 
 Given $\lambda \in \fkn^*$, denote by $\chi_\lambda$ the \emph{complex conjugate} of the character of the left $G$-module $\CC G v_{\lambda} = \CC\spanning \{ g v_{\lambda} : g\in G\}$.
  \end{definition}
 
 \begin{remark}
Since $\chi_\lambda$ is the character of the module $\CC G v_{-\lambda}$, each $\chi_\lambda$ is a supercharacter according to the definition in the introduction and the set of supercharacters of $G$ is precisely $\{ \chi_\lambda : \lambda \in \fkn^*\}$. The complex conjugation in this definition is a notational convention meant to accommodate the formula (\ref{superchar-def}) below.
\end{remark}
 
 The group $G$ acts on the left and right on %$\fkn$  by multiplication, and on 
 $\fkn^*$ by $(g , \lambda) \mapsto g\lambda$ and $( \lambda,g) \mapsto \lambda g$ where we define
 %$g\lambda(X) = \lambda(g^{-1}X)$ and $\lambda g(X) = \lambda(Xg^{-1})$ for $\lambda \in \fkn^*$ and $g\in G$.
 \[ g\lambda(X) = \lambda(g^{-1}X)\qquad\text{and}\qquad \lambda g(X) = \lambda(Xg^{-1}),\qquad\text{for }\lambda \in \fkn^*,\ g \in G,\ X \in \fkn.\]
 These actions commute, in the sense that $(g\lambda) h = g(\lambda h)$ for $g,h \in G$, so there is no ambiguity in removing all parentheses and writing expressions like $g\lambda h$.   We denote the left, right, and two-sided orbits of $\lambda \in \fkn^*$ by  $G\lambda$, $\lambda G$, and $G\lambda G$; then $G\lambda$ and $\lambda G$ have the same cardinality and 
$ |G\lambda G| = \frac{|G\lambda||\lambda G|}{|G\lambda \cap \lambda G|}$ by \cite[Lemmas 3.1 and 4.2]{DI}.
A simple calculation shows
 %\[ gv_\lambda = \overline{\theta_\lambda(g^{-1})} v_{g\lambda}\qquad\text{and}\qquad  v_\lambda = \overline{\theta_\lambda(g^{-1})} v_{\lambda g},\qquad\text{for }g \in G,\ \lambda \in \fkn^*.\]
$gv_\lambda \in \CC\spanning\{ v_{g\lambda}\}$ and $v_\lambda g \in \CC\spanning\{ v_{\lambda g}\}$ for $g \in G$.
%It follows from these identities that $\{ v_\mu : \mu \in G\lambda\}$ is a basis for $\CC G v_\lambda$, whence $\chi_\lambda(1) = |G\lambda| = |\lambda G|$.
%
%These identities show that 
%\begin{enumerate}
%\item[(i)] The set $\{ v_\mu : \mu \in G\lambda\}$ is a basis for $\CC G v_\lambda$, whence $\chi_\lambda(1) = |G\lambda| = |\lambda G|$.
%\item[(ii)] Right multiplication by $g \in G$ defines an isomorphism $\CC G v_\lambda \cong \CC G v_{\lambda g}$, and thus $\chi_\lambda = \chi_\mu$ if $\mu \in G\lambda G$.  
%\item[(iii)] $\CC G$ is the direct sum of the distinct two-sided ideals       $\CC G v_\lambda \CC G = \CC\spanning\{ v_\mu : \mu \in G\lambda G\}$, and thus $\langle \chi_\lambda,\chi_\mu\rangle = 0$ if $\mu \notin G\lambda G$
%\end{enumerate} 
  It follows that $\CC G v_\lambda \cong \CC G v_\mu$ if $\mu \in G\lambda G$ and that $\CC G$ is the direct sum of the distinct two-sided ideals     $\CC G v_\lambda \CC G = \CC\spanning\{ v_\mu : \mu \in G\lambda G\}$.
%  \begin{enumerate}
%  \item[(1)] $\{ v_\mu : \mu \in G\lambda\}$ is a basis for $\CC G v_\lambda$; 
%  \item[(2)]  $\CC G v_\lambda \cong \CC G v_\mu$ if $\mu \in G\lambda G$;
%  \item[(3)] $\CC G$ is the direct sum of the distinct two-sided ideals     $\CC G v_\lambda \CC G = \CC\spanning\{ v_\mu : \mu \in G\lambda G\}$.  
%  \end{enumerate}
  Consequently, $\chi_\lambda = \chi_\mu$ if $\mu \in G\lambda G$ and $\langle \chi_\lambda,\chi_\mu\rangle = 0$ if $\mu \notin G\lambda G$.  This means that the number of supercharacters of $G$ is the number of two-sided $G$-orbits in $\fkn^*$, and that each irreducible character of $G$ is a constituent of exactly one supercharacter.
 
  Since $\frac{1}{|G|} \sum_{\lambda \in \fkn^*} v_\lambda = 1$, the element $\frac{1}{|G|} \sum_{\mu \in G\lambda G} v_\mu$ is a central idempotent for $\CC G v_\lambda \CC G$.  By an elementary result in character theory (e.g., \cite[Proposition 14.10]{JL}), the coefficient of $g \in G$ in this idempotent is the complex conjugate of the value of the character of $\CC G v_\lambda \CC G$ at $g$. This character is just $\overline{\chi_\lambda}$ times $|G\lambda G|/|G\lambda|$, the number of left $G$-orbits in $G \lambda G$, and so
\be\label{superchar-def} 
 \chi_\lambda = \frac{|G\lambda|}{|G\lambda G|} \sum_{\mu \in G \lambda G} \theta_\mu.
\ee 
The orthogonality of the functions $\{ \theta_\mu\}$ implies that $\langle \chi_\lambda,\chi_\lambda \rangle = \frac{|G\lambda|^2}{|G\lambda G|} = |G\lambda \cap \lambda G|$.  Thus, $\chi_\lambda$ is irreducible if and only if $G\lambda \cap \lambda G = \{\lambda\}$.  As $\frac{|G\lambda G|}{|G\lambda|} \chi_\lambda$ is the character of the two-sided ideal $\CC G v_\lambda \CC G$, an irreducible constituent $\psi$ of $\chi_\lambda$ appears with multiplicity $\frac{|G\lambda|}{|G\lambda G|} \psi(1)$.  

\subsection{Decomposing supercharacters}
\label{xi}

\def\rad{\mathrm{rad}}
\def\radl{\ker_{\mathrm{L}}}
\def\radr{\ker_{\mathrm{R}}}

In practice, one can often decompose the set of supercharacters of an algebra group into a larger family of orthogonal characters by adopting a slightly more intricate construction.  This ability will prove useful in classifying the Heisenberg characters of $\UT_n(\FF_q)$,  so we briefly describe it here.

Proofs of the following statements appear in \cite{M}.  For each $\lambda \in \fkn^*$, define two sequences of subspaces $\fk l_\lambda^i, \fk s_\lambda^i \subset \fkn$ for $i\geq 0$ by the inductive formulas 
\[ \ba \fk l_\lambda ^0 &= \{0\}, \\ %\\[-10pt]
\fk s_\lambda^0 &= \fkn, \ea \qquad \text{and}\qquad 
\ba \fk l_\lambda^{i+1} & = \left\{ X \in \fk s_\lambda^i : \lambda(XY) = 0 \text{ for all }Y \in \fk s_\lambda^i \right\}, \\ %\\[-10pt]
\fk s_\lambda^{i+1} & =\left \{ X \in \fk s_\lambda^i : \lambda(XY) = 0 \text{ for all }Y \in \fk l_\lambda^{i+1}\right \}. \ea\]
%If $B_\lambda : \fkn\times \fkn \to \FF_q$ denotes the bilinear form $(X,Y) \mapsto \lambda(XY)$, then we may alternatively define the subspaces $\fk l_\lambda^i$, $\fk s_\lambda^i$ for $i>0$ by
%\[ \ba \fk l_\lambda^i & =\text{the left kernel of the restriction of $B_\lambda$ to $\fk s_\lambda^{i-1}\times \fk s_{\lambda}^{i-1}$,} \\
%\fk s_\lambda^i &=\text{the left kernel of the restriction of $B_\lambda$ to $\fk s_\lambda^{i-1} \times \fk l_\lambda^i$.}\ea\]
One can show that 
$ \chi_\lambda(1)= |G\lambda|= |\fkn| / |\fk l _\lambda^1|$ and $\langle \chi_\lambda,\chi_\lambda\rangle =|G\lambda\cap\lambda G|=  |\fk s_\lambda^1| / |\fk l_\lambda^1|.$
In addition, the subspaces $\fk l_\lambda^i$, $\fk s_\lambda^i$ are all subalgebras of $\fkn$ satisfying the following chain of inclusions:
\[\label{chain} 0  =\fk l_\lambda^0 \subset  \fk l_\lambda^1 \subset \fk l_\lambda^2 \subset \cdots\subset \fk s_\lambda^2 \subset \fk s_\lambda^1  \subset \fk s_\lambda^0 =  \fkn.\]
These chains eventually stabilize since our algebras are finite-dimensional; we therefore let $\olfkl_\lambda = \bigcup_i \fk l_\lambda^i$ and $\olfks_\lambda = \bigcap_i \fk s_\lambda^i$ and define $\oll_\lambda = 1+\olfkl_\lambda$ and $\ols_\lambda = 1+ \olfks_\lambda$ as the corresponding algebra subgroups of $G$.
The function $\theta_\lambda: g \mapsto \theta\circ\lambda(g-1)$ restricts to a linear character of $\oll_\lambda$ and we define $\xi_\lambda$ as the corresponding induced character of $G$:
\[\xi_\lambda = \Ind_{\oll_\lambda}^G(\theta_\lambda).\]
This character is a possibly reducible constituent  of the supercharacter $\chi_\lambda$; these are its key properties:
\begin{enumerate}
\item[(i)] Each supercharacter $\chi_\lambda$ for $\lambda \in \fkn^*$ is a linear combination of the characters $\{\xi_\mu : \mu \in G\lambda G\}$ with positive integer coefficients.
 
\item[(ii)] Either $\xi_\lambda = \xi_\mu$ or $\langle \xi_\lambda,\xi_\mu\rangle = 0$ for each $\lambda,\mu \in \fkn^*$.

\item[(iii)] We have $\langle\xi_\lambda,\xi_\lambda\rangle = |\ols_\lambda|/|\oll_\lambda|$, and so $\xi_\lambda$ is irreducible if and only if $\oll_\lambda = \ols_\lambda$.  When $\xi_\lambda$ is irreducible, it has the formula 
\be\label{xi-formula}\xi_\lambda = \frac{1}{\sqrt{|\lambda^G|}} \sum_{\mu \in \lambda^G} \theta_\mu,\qquad\text{where }\lambda^G \omdef=\left \{ g\lambda g^{-1} : g\in G\right\}.\ee  The set $\lambda^G$ is the \emph{coadjoint orbit} of $\lambda$.  The right hand side of this formula is a well-defined function $G \to \CC$ (though not necessarily a character)  for all $\lambda \in \fkn^*$; functions on algebra groups with this form are typically called  \emph{Kirillov functions}.

\end{enumerate}
%There is a more general formula for $\xi_\lambda$ which holds even when the character is reducible, but we do not have great use for this here; see \cite[Theorem 3.3]{M}.  

\section{Constructions and lemmas}
\label{prep-sect}

In this section we prove that the characters $\{\xi_\lambda\}$ introduced in Section \ref{xi} include all Heisenberg characters of $\UT_n(\FF_q)$.  Our results  will put us in good standing to  enumerate the Heisenberg characters of $\UT_n(\FF_q)$ in Section \ref{heis-sect}.

Our first lemma applies to all algebra groups.  

\begin{lemma}\label{heis-lem1} Fix a nilpotent $\FF_q$-algebra $\fkn$ which is finite-dimensional and associative, let $G = 1+\fkn$ be the corresponding algebra group, and choose $\lambda \in \fkn^*$ such that $\ker \lambda \supset \fkn^3$.  If $\ols_\lambda = \oll_\lambda$, then the supercharacter $\chi_\lambda$ is equal to $|\fk s_\lambda^1|/|\olfks_\lambda|$ times a sum of $\frac{|\olfks_\lambda|^2}{|\fk l_\lambda^1| |\fk s_\lambda^1|} $ %= |G\lambda G| \frac{|\oll_\lambda|}{|G|^2}$
 distinct irreducible characters of degree $|\fkn |/|\olfks_\lambda|$.  The set of these irreducible constituents is given by $\{ \xi_\mu : \mu \in G\lambda G\}$. 
 \end{lemma}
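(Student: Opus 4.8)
The plan is to combine the orthogonality and induction machinery of Section~\ref{xi} with a single structural observation about the two-sided orbit $G\lambda G$: the hypothesis $\ker\lambda\supset\fkn^3$ forces the bilinear form $(X,Y)\mapsto\lambda(XY)$ to be an invariant of the entire orbit. Granting this, every $\xi_\mu$ with $\mu\in G\lambda G$ is irreducible of one common degree, and the two quantitative assertions—the multiplicity $|\fk s_\lambda^1|/|\olfks_\lambda|$ and the count $|\olfks_\lambda|^2/(|\fk l_\lambda^1||\fk s_\lambda^1|)$ of distinct constituents—fall out of the inner-product identities already recorded.

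First I would establish the key observation. For $\mu=g\lambda h\in G\lambda G$ and $Z\in\fkn^2$, writing $g=1+A$ and $h=1+C$ so that $g^{-1}=1-A+A^2-\cdots$ and $h^{-1}=1-C+\cdots$, one computes $g^{-1}Zh^{-1}=Z+(\text{terms in }\fkn^3)$, every correction term involving a product of $Z\in\fkn^2$ with a further factor from $\fkn$; hence $\mu(Z)=\lambda(g^{-1}Zh^{-1})=\lambda(Z)$ because $\ker\lambda\supset\fkn^3$. Thus $\mu$ and $\lambda$ agree on $\fkn^2$, and in particular $\mu(XY)=\lambda(XY)$ for all $X,Y\in\fkn$. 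Since the subspaces $\fk l_\mu^i,\fk s_\mu^i$ are defined purely through the values $\mu(XY)$, an easy induction yields $\fk l_\mu^i=\fk l_\lambda^i$ and $\fk s_\mu^i=\fk s_\lambda^i$ for all $i$ and all $\mu\in G\lambda G$, whence $\olfkl_\mu=\olfkl_\lambda$ and $\olfks_\mu=\olfks_\lambda$. In particular the hypothesis $\ols_\lambda=\oll_\lambda$ propagates to $\ols_\mu=\oll_\mu$ for every $\mu\in G\lambda G$, so by property (iii) each $\xi_\mu$ is irreducible of degree $[G:\oll_\mu]=|\fkn|/|\olfkl_\lambda|=|\fkn|/|\olfks_\lambda|$; write $d$ for this common degree.

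Next I would assemble the decomposition. The two-sided orbit $G\lambda G$ is a disjoint union of coadjoint orbits, and for each such orbit $\mathcal O$ the irreducible $\xi_\mu$ (any $\mu\in\mathcal O$) has the Kirillov form $\xi_\mu=|\mathcal O|^{-1/2}\sum_{\nu\in\mathcal O}\theta_\nu$ of property (iii); comparing degrees gives $|\mathcal O|=d^2$, so all these coadjoint orbits share the size $d^2$ and $\sum_{\nu\in\mathcal O}\theta_\nu=d\,\xi_\mu$. Substituting into $\chi_\lambda=\frac{|G\lambda|}{|G\lambda G|}\sum_{\mu\in G\lambda G}\theta_\mu$ exhibits $\chi_\lambda$ as $\frac{|G\lambda|d}{|G\lambda G|}$ times the sum of the distinct irreducibles $\xi_\mu$, which are pairwise orthogonal by property (ii). Using $|G\lambda|=|\fkn|/|\fk l_\lambda^1|$ together with $|G\lambda G|=|G\lambda|^2/|G\lambda\cap\lambda G|$ and $|G\lambda\cap\lambda G|=|\fk s_\lambda^1|/|\fk l_\lambda^1|$, the coefficient collapses to $|\fk s_\lambda^1|/|\olfks_\lambda|$, the asserted multiplicity. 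Finally the number $k$ of distinct constituents follows from $\langle\chi_\lambda,\chi_\lambda\rangle=|\fk s_\lambda^1|/|\fk l_\lambda^1|$: since the constituents are orthonormal, $\langle\chi_\lambda,\chi_\lambda\rangle$ equals $k$ times the square of the multiplicity, giving $k=|\olfks_\lambda|^2/(|\fk l_\lambda^1||\fk s_\lambda^1|)$.

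The only genuinely structural step is the first—that passing to the two-sided orbit leaves the form $(X,Y)\mapsto\lambda(XY)$ unchanged—and this is precisely where $\ker\lambda\supset\fkn^3$ enters, so I expect it to be the main point requiring care (in particular, verifying that the correction terms $g^{-1}Zh^{-1}-Z$ really lie in $\fkn^3$ for $Z\in\fkn^2$). Everything afterward is bookkeeping with the formulas of Section~\ref{xi}, and as a consistency check one notes that the multiplicity $|\fk s_\lambda^1|/|\olfks_\lambda|$ is a power of $q$ (since $\olfks_\lambda\subset\fk s_\lambda^1$), in agreement with property (i).
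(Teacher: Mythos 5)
Your proposal is correct, and its structural core coincides exactly with the paper's: both hinge on the observation that $gXh-X\in\fkn^3$ for $X\in\fkn^2$ and $g,h\in G$, so that $\ker\lambda\supset\fkn^3$ makes the form $(X,Y)\mapsto\lambda(XY)$ constant on $G\lambda G$, whence $\olfkl_\mu=\olfkl_\lambda$ and $\olfks_\mu=\olfks_\lambda$ for all $\mu\in G\lambda G$ and every $\xi_\mu$ is irreducible of the common degree $|\fkn|/|\olfks_\lambda|$. Where you diverge is in the quantitative bookkeeping. The paper invokes property (i) of Section \ref{xi} (every irreducible constituent of $\chi_\lambda$ is some $\xi_\mu$, $\mu\in G\lambda G$) together with the multiplicity formula recorded at the end of Section \ref{superchar-defs} (a constituent $\psi$ appears with multiplicity $\frac{|G\lambda|}{|G\lambda G|}\psi(1)$), and then obtains the number of constituents by dividing $\chi_\lambda(1)$ by multiplicity times degree. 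You instead bypass property (i) entirely: you partition $G\lambda G$ into coadjoint orbits, use the Kirillov formula (\ref{xi-formula}) to collapse $\sum_{\nu\in\mathcal{O}}\theta_\nu=d\,\xi_{\mu}$ for each orbit $\mathcal{O}$ (after noting $|\mathcal{O}|=d^2$ by comparing degrees), substitute into (\ref{superchar-def}), and then count constituents from $\langle\chi_\lambda,\chi_\lambda\rangle=|\fk s_\lambda^1|/|\fk l_\lambda^1|$. Your route is somewhat more self-contained---it reconstructs the full decomposition of $\chi_\lambda$ directly from the two displayed formulas of Section \ref{prelim-sect}--\ref{xi} rather than importing the positivity statement (i) from \cite{M}---and it makes the identification of the constituent set $\{\xi_\mu:\mu\in G\lambda G\}$ transparent; the paper's version is shorter precisely because it leans on that imported machinery. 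Both yield the same multiplicity $|\fk s_\lambda^1|/|\olfks_\lambda|$ and the same count $|\olfks_\lambda|^2/(|\fk l_\lambda^1||\fk s_\lambda^1|)$.
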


\begin{proof}
Let $\mu \in G\lambda G$.  If $X \in \fkn^2$ then $gXh-X \in \fkn^3$ for all $g,h \in G$.  Since $\ker \lambda \supset\fkn^3$, it follows that $\lambda(XY) = \mu(XY)$ for all $X,Y \in \fkn$, and from this it is clear that $\ols_\lambda = \ols_\mu$ and $\oll_\lambda = \oll_\mu$.   Assuming $\ols_\lambda = \oll_\lambda$, it follows by property (i) in Section \ref{xi}  that every irreducible constituent of $\chi_\lambda$ is a character $\xi_\mu$ for some $\mu \in G\lambda G$, and in particular, these constituents all have degree $|G|/|\oll_\lambda|$.  From the discussion at the end of Section \ref{superchar-defs}, it follows that these irreducible constituents each appear in $\chi_\lambda$ with multiplicity $\frac{|G\lambda|}{|G\lambda G|} \frac{|G|}{|\oll_\lambda|} = \frac{|\fk s_\lambda^1|}{|G|} \frac{|G|}{|\ols_\lambda|} = |\fk s_\lambda^1|/|\olfks_\lambda|$.  Dividing $\chi_\lambda(1) = |G|/|\fk l_\lambda^1|$ by the product of this multiplicity with $\xi_\lambda(1) = |G|/|\oll_\lambda| = |G| / |\olfks_\lambda|$, we find the $\chi_\lambda$ must have $\frac{|\olfks_\lambda|^2}{|\fk l_\lambda^1| |\fk s_\lambda^1|} $ distinct irreducible constituents.
\end{proof}

In the following lemma we use the term  \emph{pattern algebra} to mean a subalgebra of  $\fkt_n(\FF_q)$ for some $n$ and $q$ spanned by a subset of the elementary matrices $\{ e_{ij} : 1\leq i<j \leq n\}$. Likewise, a  \emph{pattern group} shall mean an algebra group corresponding to a pattern algebra.  Equivalently, a pattern group is the group of unipotent elements of the incidence algebra of a finite poset over a finite field.
Pattern groups form a highly accessible family of algebra groups and serve as fundamental examples in supercharacter theory; for background and discussion, see \cite{DT}.  Our second lemma is a straightforward observation concerning the lower central series and Heisenberg characters of pattern groups.  Here we employ the notation $[g,h] = ghg^{-1}h^{-1}$ for $g,h \in G$, and for subsets $S,T\subset G$ write $[S,T]$ to denote the group generated by the commutators $[g,h]$ for $g \in S$ and $h \in T$.
Also, $\Mat_n(\FF)$ denotes the set of $n\times n$ matrices over a field $\FF$.

\begin{lemma}\label{heis-obs} Let $\fkn$ be a pattern algebra with $G = 1+\fkn$ the corresponding pattern group.
\begin{enumerate}
\item[(i)] Let $G_1=G$ for $i\geq 1$ and set $G_{i+1} =  [G_i,G]$.  Then $G_i = 1 + \fkn^i$.
%Consequently:
%\begin{enumerate}
%\item[(a)] %The quotient $G/[G,G]$ may  be identified with the algebra group $1+\fkn/\fkn^2$, and it follows that 
%The linear characters of $G$ are precisely the functions
% $\theta_\lambda$ for $\lambda \in \fkn^*$ with $\ker \lambda \supset \fkn^2$.  Notably, each such function is a Kirillov function of $G$.
% 
% \item[(b)] The sets of Heisenberg characters and strongly Heisenberg characters of $G$ coincide.
%\end{enumerate}

\item[(ii)] If $\fkn = \fkt_n(\FF_q)$ and $k$ is a positive integer, then $\fkn^k = \left\{ X \in \Mat_n(\FF_q) : X_{ij} = 0\text{ if } j < i+k\right\}$.

\item[(iii)] A character $\psi \in \Irr(G)$ is Heisenberg if and only if $\ker \psi \supset 1+\fkn^3$, which occurs if and only if $\psi$ is a constituent of a supercharacter $\chi_\lambda$ indexed by a functional $\lambda \in \fkn^*$ with $\ker \lambda \supset \fkn^3$.

\end{enumerate}
\end{lemma}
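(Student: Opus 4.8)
The three parts are essentially independent, and I would prove them in the order (i), (ii), (iii).

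\emph{Part (i).} I would induct on $i$, with $G_1 = G = 1+\fkn$ as the base case. The inclusion $G_{i+1}\subseteq 1+\fkn^{i+1}$ holds for any algebra group: using the identity $[g,h]-1 = (gh-hg)g^{-1}h^{-1}$ and writing (by the inductive hypothesis $G_i = 1+\fkn^i$) $g = 1+X$ with $X\in\fkn^i$ and $h = 1+Y$ with $Y\in\fkn$, one has $gh-hg = XY-YX\in\fkn^{i+1}$, and since $\fkn^{i+1}$ is an ideal the whole product lies in $\fkn^{i+1}$; as $1+\fkn^{i+1}$ is a subgroup it contains every such commutator and hence the subgroup $G_{i+1}$ they generate. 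The reverse inclusion is where the pattern hypothesis enters. Each elementary matrix $e_{ab}\in\fkn^{i+1}$ admits a factorization $e_{ab} = e_{ac}e_{cb}$ with $e_{ac}\in\fkn^i$ and $e_{cb}\in\fkn$ coming from a length-$(i+1)$ chain of generators, and a direct computation using $e_{ac}^2 = e_{cb}^2 = 0 = e_{cb}e_{ac}$ gives the exact identity $[1+te_{ac},\,1+e_{cb}] = 1+te_{ab}$ for every $t\in\FF_q$. By induction $1+te_{ac}\in G_i$ and $1+e_{cb}\in G$, so each transvection $1+te_{ab}$ lies in $G_{i+1}$; since these transvections generate $1+\fkn^{i+1}$ (a routine filtration argument for the unipotent group of an ideal spanned by elementary matrices), the reverse inclusion follows.

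\emph{Part (ii).} This is a direct computation. The algebra $\fkt_n(\FF_q)^k$ is spanned by products $e_{a_1b_1}\cdots e_{a_kb_k}$, which are nonzero only when $b_1=a_2,\dots,b_{k-1}=a_k$, in which case the product equals $e_{a_1b_k}$ with $b_k-a_1\geq k$; conversely any $e_{ij}$ with $j-i\geq k$ factors this way. Hence $\fkt_n(\FF_q)^k$ is the $\FF_q$-span of $\{e_{ij}:j\geq i+k\}$, i.e. $\{X:X_{ij}=0\text{ if }j<i+k\}$.

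\emph{Part (iii).} Put $N = 1+\fkn^3$, a normal subgroup of $G$. Since $[G,[G,G]] = [G,G_2] = [G_2,G] = G_3 = N$ by part (i), a character $\psi\in\Irr(G)$ is Heisenberg exactly when $N\subseteq\ker\psi$, which is the first equivalence. For the second, I would restrict to $N$. If $\ker\lambda\supseteq\fkn^3$, then every $\mu\in G\lambda G$ also kills $\fkn^3$, because $\mu(X) = \lambda(g^{-1}Xh^{-1})$ and $g^{-1}Xh^{-1}\in\fkn^3$ whenever $X\in\fkn^3$; thus each $\theta_\mu$ is trivial on $N$, and (\ref{superchar-def}) gives $\chi_\lambda|_N = \chi_\lambda(1)\mathbf{1}_N$, so $N\subseteq\ker\chi_\lambda = \bigcap_i\ker\psi_i$ and every constituent $\psi$ of $\chi_\lambda$ kills $N$. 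Conversely, suppose $\psi$ is a constituent of $\chi_\lambda$ with $N\subseteq\ker\psi$. Then $\chi_\lambda|_N$ contains the trivial character of $N$, so
\[\langle\chi_\lambda|_N,\mathbf{1}_N\rangle = \frac{|G\lambda|}{|G\lambda G|}\cdot\frac{1}{|N|}\sum_{\mu\in G\lambda G}\,\sum_{X\in\fkn^3}\theta(\mu(X)) = \frac{|G\lambda|}{|G\lambda G|}\cdot\#\{\mu\in G\lambda G : \mu|_{\fkn^3}=0\}\]
is positive, where the inner sum $\sum_{X\in\fkn^3}\theta(\mu(X))$ equals $|N|$ or $0$ according as $\mu|_{\fkn^3}$ vanishes or not (orthogonality of characters of the additive group $\fkn^3$). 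Hence some $\mu\in G\lambda G$ kills $\fkn^3$; but the substitution $X\mapsto g^{-1}Xh^{-1}$ is a bijection of $\fkn^3$, so $\mu|_{\fkn^3}=0$ forces $\lambda|_{\fkn^3}=0$, i.e. $\ker\lambda\supseteq\fkn^3$.

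I expect the main obstacle to be the reverse inclusion in part (i): unlike the forward inclusion, it genuinely uses the pattern hypothesis, through the exact transvection identity and the claim that these transvections generate $1+\fkn^{i+1}$. The converse direction in part (iii) is the other delicate point, but once one restricts to $N$ it reduces cleanly to the orthogonality computation displayed above.
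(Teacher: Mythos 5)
Your proof is correct, and parts (i) and (ii) follow the paper's argument essentially verbatim: the same forward inclusion $[1+\fkn^i,G]\subset 1+\fkn^{i+1}$ valid for all algebra groups, the same factorization of an elementary matrix in $\fkn^{i+1}$ into a factor in $\fkn^i$ times a factor in $\fkn$ (the paper groups the factors in the opposite order, which is immaterial), the same exact commutator identity producing the transvections $1+te_{j\ell}$, and the same appeal to the fact that such transvections generate the pattern group $1+\fkn^{i+1}$ --- a fact the paper attributes to Section 3.2 of \cite{DT} rather than calling routine. Where you genuinely diverge is the second equivalence in part (iii): the paper disposes of it by citing Proposition 3.3 of \cite{M_normal} together with standard inflation arguments, in effect passing to the quotient algebra group $G/(1+\fkn^3)\cong 1+\fkn/\fkn^3$, whereas you prove it directly by restricting $\chi_\lambda$ to $N=1+\fkn^3$ and computing $\langle\chi_\lambda|_N,\mathbf{1}_N\rangle$ via orthogonality of the additive characters $\theta\circ\mu|_{\fkn^3}$. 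Your computation is self-contained --- it uses only the formula (\ref{superchar-def}) and the fact, recorded in Section \ref{superchar-defs}, that every irreducible character is a constituent of exactly one supercharacter, which your converse direction implicitly requires --- and it yields the marginally sharper conclusion that \emph{every} functional $\lambda$ indexing a supercharacter with a Heisenberg constituent must kill $\fkn^3$, not merely some representative of the two-sided orbit. What the paper's route buys in exchange is brevity and the placement of the statement inside a general framework for quotients of algebra groups by ideals. One small point worth making explicit in your argument: the vanishing of the inner sum $\sum_{X\in\fkn^3}\theta(\mu(X))$ when $\mu|_{\fkn^3}\neq 0$ uses the $\FF_q$-linearity of $\mu$, so that $\mu(\fkn^3)$ is either $0$ or all of $\FF_q$ and hence cannot lie inside $\ker\theta$; this is the same standard observation that makes the functions $\theta_\lambda$, $\lambda\in\fkn^*$, distinct characters of the additive group of $\fkn$.
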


Isaacs proves the special case $[G,G] = 1+\fkn^2$ of the first part as \cite[Corollary 2.2]{I}.

\begin{proof}
 The inclusion $[1+\fkn^i,G] \subset 1+\fkn^{i+1}$ holds for all algebra groups $G$ and is straightforward to check.  For the reverse direction, embed $\fkn \subset \fkt_n(\FF_q)$ and suppose the elementary matrix $e_{j\ell} \in \fkn^{i+1}$.  Because pattern algebras are spanned by elementary matrices, there must exist $k$ with $e_{jk} \in \fkn$ and $e_{k\ell} \in \fkn^i$.  Necessarily $j<k<\ell$ and we have $1+te_{j\ell} = [1+e_{jk}, 1+te_{k\ell} ] \in [G,G_i]$ for all $t \in \FF_q$.  Elements of the form $1+t e_{j\ell}$ generate the pattern group $1+\fkn^{i+1}$ (see the discussion in Section 3.2 of \cite{DT}), so our inclusion in an equality.

Part (ii) is a simple calculation, and the first statement in (iii) is immediate from (i).  The claim that $\psi \in \Irr(G)$ has $\ker \psi \supset 1+\fkn^3$ if and only if $\psi$ is a constituent of a supercharacter $\chi_\lambda$ with $\ker \lambda \supset \fkn^3$ follows from \cite[Proposition 3.3]{M_normal} and standard results in character theory concerning inflation.
 \end{proof}

We now have our promised result.

\begin{proposition}\label{heis-cor}
Each Heisenberg character of $\UT_n(\FF_q)$ is equal to $\xi_\lambda$ for some $\lambda \in \fkt_n(\FF_q)^*$.
\end{proposition}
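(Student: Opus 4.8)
The plan is to combine Lemma~\ref{heis-obs}(iii) with Lemma~\ref{heis-lem1}, applied to the pattern algebra $\fkn = \fkt_n(\FF_q)$. By Lemma~\ref{heis-obs}(iii), any Heisenberg character $\psi$ of $G = \UT_n(\FF_q)$ is an irreducible constituent of some supercharacter $\chi_\lambda$ with $\ker\lambda \supset \fkn^3$. So it suffices to show that for every such $\lambda$, each irreducible constituent of $\chi_\lambda$ is of the form $\xi_\mu$ for some $\mu \in \fkn^*$. The natural route is to verify that the hypotheses of Lemma~\ref{heis-lem1} are met, since the conclusion of that lemma explicitly identifies the irreducible constituents of $\chi_\lambda$ as the set $\{\xi_\mu : \mu \in G\lambda G\}$.

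First I would fix a Heisenberg character $\psi$ and use Lemma~\ref{heis-obs}(iii) to produce a functional $\lambda$ with $\ker\lambda \supset \fkn^3$ such that $\psi$ is a constituent of $\chi_\lambda$. The only gap between this and a direct application of Lemma~\ref{heis-lem1} is the extra hypothesis $\ols_\lambda = \oll_\lambda$ appearing in that lemma. The key observation is that $\ols_\lambda = \oll_\lambda$ is \emph{automatic} when $\ker\lambda \supset \fkn^3$: the condition $\lambda(XY)=0$ defining the subspaces $\fk l_\lambda^i, \fk s_\lambda^i$ only sees the bilinear form $(X,Y)\mapsto \lambda(XY)$, and since $XY \in \fkn^2$, this form factors through $\fkn^2/\fkn^3$, on which $\lambda$ induces a nondegenerate-type pairing. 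I would show that under $\ker\lambda\supset\fkn^3$ the stabilization happens already at the first step, giving $\olfks_\lambda = \fk s_\lambda^1$ and $\olfkl_\lambda = \fk l_\lambda^1$, and moreover that $\fk s_\lambda^1 = \fk l_\lambda^1$, i.e. the radical of the form equals its own orthogonal complement. This is where the $\fkn^3$ hypothesis does its real work: the associativity combined with $\ker\lambda\supset\fkn^3$ makes $\lambda(XY)$ symmetric enough (modulo $\fkn^3$) that the left and right orthogonality conditions coincide.

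Once $\ols_\lambda = \oll_\lambda$ is established, Lemma~\ref{heis-lem1} applies directly and tells us that the irreducible constituents of $\chi_\lambda$ are exactly $\{\xi_\mu : \mu \in G\lambda G\}$. Hence $\psi = \xi_\mu$ for some $\mu \in G\lambda G \subset \fkt_n(\FF_q)^*$, which is precisely the claim. I would also note in passing that the proof of Lemma~\ref{heis-lem1} already records that $\ols_\lambda = \ols_\mu$ and $\oll_\lambda = \oll_\mu$ for all $\mu \in G\lambda G$ under the $\fkn^3$ hypothesis, so the irreducibility criterion $\ols_\mu = \oll_\mu$ transfers across the two-sided orbit with no further checking.

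The main obstacle I anticipate is verifying the equality $\ols_\lambda = \oll_\lambda$ cleanly, i.e. showing that the left-orthogonal and right-orthogonal radicals of the form $\lambda(XY)$ coincide when $\ker\lambda\supset\fkn^3$. This requires being careful about the noncommutativity of $\fkn$: $\lambda(XY)$ and $\lambda(YX)$ need not agree in general, and one must use that their difference $\lambda(XY-YX) = \lambda([X,Y])$ vanishes because $[X,Y] \in \fkn^2$ is being paired against elements landing in $\fkn^3$ at the relevant stage. Handling the inductive definition of the $\fk l^i_\lambda, \fk s^i_\lambda$ and confirming the chain collapses after one step is the delicate bookkeeping; everything downstream is a formal consequence of Lemma~\ref{heis-lem1}.
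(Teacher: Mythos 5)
Your overall skeleton (Lemma~\ref{heis-obs}(iii) to reduce to constituents of $\chi_\lambda$ with $\ker\lambda \supset \fkn^3$, then Lemma~\ref{heis-lem1} to identify those constituents as $\xi_\mu$'s) matches the paper, but the step you flag as "where the $\fkn^3$ hypothesis does its real work" is false, and it is exactly the step the paper handles differently. You claim that $\ker\lambda \supset \fkn^3$ forces the chain to stabilize at the first step with $\fk l_\lambda^1 = \fk s_\lambda^1 = \olfkl_\lambda = \olfks_\lambda$. The paper's own Lemma~\ref{tech-lem} (and Lemma~\ref{tech-lem1}) supplies explicit counterexamples: take $n=4$ and $\lambda = e_{1,3}^* + e_{2,4}^* \in \fkt_4(\FF_q)^*$, which certainly kills $\fkn^3$. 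A direct computation (or the formulas displayed in the proof of Lemma~\ref{tech-lem}) gives
\[
\fk l_\lambda^1 = \{X : X_{1,2}=X_{2,3}=0\} \subsetneq \fk s_\lambda^1 = \{X : X_{2,3}=0\},
\qquad
\olfkl_\lambda = \olfks_\lambda = \fk s_\lambda^1 \neq \fk l_\lambda^1 ,
\]
so the left radical does not equal its orthogonal complement and stabilization does not occur at step one. The symmetry heuristic behind your claim also fails at the root: $\lambda([X,Y])$ need not vanish, since $[X,Y]$ lies only in $\fkn^2$, not $\fkn^3$ (e.g.\ $\lambda = e_{1,3}^*$ on $\fkt_3(\FF_q)$ has $\lambda([e_{1,2},e_{2,3}]) = \lambda(e_{1,3}) = 1$), and correspondingly the left and right radicals of the form $(X,Y)\mapsto\lambda(XY)$ genuinely differ.

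What is true — and what the paper proves — is the weaker statement that $\olfkl_\lambda = \olfks_\lambda$ after full stabilization, and even this is not a formal consequence of $\ker\lambda\supset\fkn^3$: the paper obtains it by invoking the classification (\ref{bell-thm}) to choose a ``quasi-monomial'' representative $\mu$ in the two-sided orbit $G\lambda G$ (a functional coming from a labeled set partition), citing \cite[Theorem 3.1]{M_X} for the equality $\oll_\mu = \ols_\mu$, and then transferring that equality back to $\lambda$ via the orbit-invariance of $\oll$ and $\ols$ established in the first lines of the proof of Lemma~\ref{heis-lem1} (valid precisely because $\ker\lambda\supset\fkn^3$). You mention this orbit-invariance only ``in passing,'' but it is the load-bearing mechanism: without the external input from \cite{M_X} (or an equivalent argument producing a good orbit representative), your proof has no way to verify the hypothesis $\ols_\lambda = \oll_\lambda$ of Lemma~\ref{heis-lem1}, and the route you propose for verifying it would fail on the simplest nontrivial Heisenberg characters of degree $q$.
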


\begin{proof}
Each Heisenberg character is a constituent of a supercharacter $\chi_\lambda$ for some $\lambda \in \fkt_n(\FF_q)^*$ with $\ker \lambda \supset (\fkt_n(\FF_q))^3$.  By  (\ref{bell-thm}), there exists $\mu$ in the two-sided $\UT_n(\FF_q)$-orbit of $\lambda$ which is ``quasi-monomial'' in the sense \cite[Section 3.1]{M_X} and which consequently has $\oll_\mu = \ols_\mu$ by \cite[Theorem 3.1]{M_X}.  By Lemma \ref{heis-lem1},  the irreducible constituents of $\chi_\lambda=\chi_\mu$ are therefore the characters $\xi_\nu$ for the maps $\nu$ in the two-sided $\UT_n(\FF_q)$-orbit of $\lambda$. %, all of which vanish on $(\fkt_n(\FF_q))^3$.
\end{proof}

In preparation for Sections \ref{heis-sect} and \ref{c-heis-sect}, we now prove two somewhat more technical results.
Recall that if $\fkn$ is a nilpotent $\FF_q$-algebra and $G=1+\fkn$, then 
the coadjoint orbit of $\lambda \in \fkn^*$ is the set of functionals $\{ g\lambda g^{-1}  : g \in G\}$.
We write $e_{ij}^* \in \fkt_n(\FF_q)^*$ for the linear functional with $e_{ij}^*(X) = X_{ij}$ for $1\leq i<j\leq n$.  
%As usual, $n$ is a positive integer and $q>1$ is a prime power.

\begin{lemma}\label{tech-lem1} 
Choose integers $j,k$ with $1 \leq j \leq k \leq n-2$, and let $
\lambda_t = \sum_{ i=j}^k t_i e_{i,i+2}^*
\in \fkt_n(\FF_q)^*$ for some 
$t=(t_j,\dots,t_k) \in (\FF_q^\times)^{k-j+1}$.
Assume $k-j$ is odd and let $d=(k-j+1)/2$.  The following statements then hold:
%Choose integers $j,k$ such that $1 \leq j < k \leq n-2$ and $k-j$ is odd.  Let $d = (k-j+1)/2$ and fix $t=(t_j,t_{j+1},\dots,t_{k}) \in (\FF_q^\times)^{2d}$.  If \[\lambda_t \omdef=\sum_{i=j}^{k} t_i e_{i,i+2}^*
%%= e_{ t_d e_{j,j+2}^* + t_{d-1} e_{j+1,j+3}^*+t_{d-2} e_{j+2,j+4}^*+ \dots + t_1 e_{k,k+2}^*
%\in \fkt_n(\FF_q)^*
%\]
%determined by the  equation
%$P_{t,d}(u_{2d}, u_{2d-2},\dots, u_2,u_0)  = 0.$ 
%Consequently:
\begin{enumerate}

\item[(1)] The coadjoint orbit of $\lambda_t$ is the set 
\[ \left\{ \lambda_t + \sum_{i=0}^{2d} u_i e_{i+j,i+j+1}^* : 
u_1,\dots,u_{2d}\in \FF_q\text{ and } u_0 = \sum_{i=1}^d a_iu_{2i} \right\}\] where  $a_1,a_2,\dots,a_d \in \FF_q^\times$ are elements determined by $t$.

\item[(2)] The coadjoint orbits of $\lambda_t + u e_{j,j+1}^*$ for $u \in \FF_q$ are pairwise disjoint.

\item[(3)] Let $\gamma = e_{j,j+1}^*+e_{j+1,j+2}^* +\dots + e_{k,k+1}^* + e_{k+1,k+2}^* \in \fkt_n(\FF_q)^*$.  Then 
%$\lambda +\FF_q\spanning\{\gamma\}$ is a subset of the coadjoint orbit of $\lambda$ if and only if $\lambda +\kappa + \FF_q\spanning\{\gamma\}$ is a subset of the coadjoint orbit of $\lambda+\kappa$ for each $\kappa \in \FF_q\spanning\{e_{j,j+1}^*\}$.  
%
%\item[(3)] The 
the cardinality of the set 
\[ \left\{ t \in (\FF_q^\times)^{2d} :  \text{$\lambda_t + \gamma$ belongs to the coadjoint orbit of $\lambda_t$
} 
\right\}
\]
is $\frac{1}{q}\((q-1)^{2d}- (-1)^d(q-1)^d\)$.

\end{enumerate}
\end{lemma}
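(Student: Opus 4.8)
The plan is to reduce membership of $\lambda_t+\gamma$ in the coadjoint orbit of $\lambda_t$ to a single scalar equation in the data $a_1,\dots,a_d$ furnished by part (1), and then to count how often this equation holds as $t$ varies. To begin, the functional $\gamma=\sum_{i=0}^{2d}e_{i+j,i+j+1}^*$ is exactly the first-superdiagonal part of an orbit element all of whose coordinates $u_0,u_1,\dots,u_{2d}$ equal $1$, and it contributes nothing on the second superdiagonal. Since every element of the orbit has the same second-superdiagonal part $\lambda_t$, the point $\lambda_t+\gamma$ lies in the orbit if and only if the all-ones vector satisfies the orbit's defining constraint $u_0=\sum_{i=1}^d a_iu_{2i}$; that is, if and only if $\sum_{i=1}^d a_i=1$. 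Hence the set to be counted is precisely $\{t\in(\FF_q^\times)^{2d}:\sum_{i=1}^d a_i(t)=1\}$.

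Next I would determine the dependence of $a_i$ on $t$, reusing the coadjoint computation underlying (1); after shifting indices I may assume $j=1$, so $\lambda_t=\sum_{i=1}^{2d}t_ie_{i,i+2}^*$. Writing $g=1+N\in\UT_n(\FF_q)$ and extracting the coefficient of $X_{p,p+1}$ from $g\lambda_tg^{-1}(X)=\lambda_t(g^{-1}Xg)$, only $i\in\{p-1,p\}$ contribute and one obtains $u_{p-1}=t_pN_{p+1,p+2}-t_{p-1}N_{p-1,p}$, with the convention $t_i=0$ for $i\notin[1,2d]$. Setting $n_p=N_{p,p+1}$, the even coordinates $u_0,u_2,\dots,u_{2d}$ involve only $n_2,n_4,\dots,n_{2d}$; eliminating these $d$ quantities yields the relation of (1) with $a_i=-\prod_{m=1}^{i}t_{2m-1}/t_{2m}\in\FF_q^\times$. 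The structural consequence I need is that, through the ratios $r_m=t_{2m-1}/t_{2m}$ and their partial products, the map $t\mapsto(a_1,\dots,a_d)$ is a surjection of $(\FF_q^\times)^{2d}$ onto $(\FF_q^\times)^d$ every fiber of which has exactly $(q-1)^d$ elements.

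Finally I would perform the count. By the preceding paragraph the desired cardinality is $(q-1)^d\cdot N$, where $N=\#\{(a_1,\dots,a_d)\in(\FF_q^\times)^d:\sum_{i=1}^d a_i=1\}$. A standard additive-character argument shows that for any $c\in\FF_q^\times$,
\[
\#\Bigl\{(x_1,\dots,x_d)\in(\FF_q^\times)^d:\ \ts\sum_{i}x_i=c\Bigr\}=\tfrac1q\bigl((q-1)^d-(-1)^d\bigr),
\]
because the trivial character contributes $(q-1)^d$, each nontrivial character contributes $(-1)^d$, and $\sum_{\psi\neq1}\psi(-c)=-1$ when $c\neq0$. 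Taking $c=1$ and multiplying by $(q-1)^d$ gives $\tfrac1q\bigl((q-1)^{2d}-(-1)^d(q-1)^d\bigr)$, as claimed.

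I expect the main obstacle to be the middle step: correctly carrying out the coadjoint bookkeeping and the elimination that exhibit the product form of the $a_i$, equivalently the uniform $(q-1)^d$-to-one fiber structure of $t\mapsto(a_1,\dots,a_d)$. Once this is secured, both the reduction to the single equation $\sum_i a_i=1$ and the closing character-sum count are routine.
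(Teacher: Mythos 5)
Your proposal is correct where it engages the lemma, and for part (3) it takes a genuinely different counting route from the paper. Both arguments share the same setup: the coadjoint action reduces to a linear system in the superdiagonal entries of $g$, every orbit element is $\lambda_t$ plus a first-superdiagonal functional, and membership of $\lambda_t+\gamma$ becomes the single scalar equation obtained by plugging the all-ones vector into the orbit constraint. Your computation checks out: only $i\in\{p-1,p\}$ contribute to the coefficient of $X_{p,p+1}$, the even-indexed coordinates involve only $n_2,n_4,\dots,n_{2d}$, and the elimination yields $a_i=-\prod_{m\le i}t_{2m-1}/t_{2m}$ (I verified $d=1,2$ directly), so the step you flag as the main obstacle is in fact secure. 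From there the two proofs diverge. The paper never makes the $a_i$ explicit: it encodes the constraint in recursively defined polynomials $P_{t,i}$, observes that $P_t(1,\dots,1)=(t_j/t_{j+1})\bigl(P_{t'}(1,\dots,1)-1\bigr)$ where $t'$ omits the first two coordinates, and deduces the recurrence $f_d(q)=(q-1)\bigl((q-1)^{2d-2}-f_{d-1}(q)\bigr)$, which it then solves. You instead exploit the closed form of the $a_i$: the map $t\mapsto(a_1,\dots,a_d)$ is a $(q-1)^d$-to-one surjection onto $(\FF_q^\times)^d$ (the ratios $t_{2m-1}/t_{2m}$ are equidistributed, and the $a_i$ are a triangular bijection of those ratios), so the count reduces to the standard character-sum evaluation $\#\{a\in(\FF_q^\times)^d:\sum_i a_i=1\}=\tfrac1q\bigl((q-1)^d-(-1)^d\bigr)$. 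Your route replaces the paper's induction on $d$ with a one-step closed-form count, and as a by-product makes part (1) sharper than the paper states it.

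The one omission: you never address part (2). With your explicit form of (1) it is immediate, but a complete write-up must say so: since $ue^*_{j,j+1}$ vanishes on $\fkn^2$, it is fixed by the two-sided action, so the coadjoint orbit of $\lambda_t+ue^*_{j,j+1}$ equals $ue^*_{j,j+1}$ plus the coadjoint orbit of $\lambda_t$, i.e.\ the affine subspace cut out by $u_0=u+\sum_{i=1}^d a_iu_{2i}$; these subspaces are pairwise disjoint as $u$ ranges over $\FF_q$.
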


%\begin{remark} Concerning (2), each $\kappa \in \FF_q\spanning \{e_{j,j+1}^*\}$ is invariant under the two-sided action of $\UT_n(\FF_q)$, and so $\lambda_t + \FF_q\spanning\{\gamma\}$ is a subset of the coadjoint orbit of $\lambda_t$ if and only if $\lambda_t+\kappa + \FF_q\spanning\{\gamma\}$ is a subset of the coadjoint orbit of $\lambda_t+\kappa$ for each $\kappa \in \FF_q\spanning \{e_{j,j+1}^*\}$.
%\end{remark}

\begin{proof}
Direct calculation shows that every element of the coadjoint orbit of $\lambda_t$ has the form $\lambda_t + \kappa$ for some $\kappa \in \FF_q\spanning\{ e_{i,i+1}^* :  j\leq i \leq k+1 \}$, and that, since every such $\kappa$ is invariant under the two-sided action of $\UT_n(\FF_q)$, one has $g^{-1} \lambda_t g = \lambda_t+\kappa$ for $g \in \UT_n(\FF_q)$ if and only if $g^{-1}\lambda_t - \lambda_t g^{-1} = \kappa$.
As an abbreviation, define $g_i = g_{i,i+1}$.  Then the map $g\mapsto g^{-1}\lambda_t -\lambda_t g^{-1}$ is a linear function of the entries $g_{j}$, $g_{j+1}$, \dots, $g_{k}$, $g_{k+1}$;
in particular, 
we compute $g^{-1}\lambda_t -\lambda_t g^{-1} = \sum_{i=0}^{2d} u_i e_{i+j,i+j+1}^*$ where 
\be\label{linsys}
\(\barr{lllll}
 0 & -t_{j} \\
   t_{j} & 0 & \ddots \\
    & t_{j+1}& \ddots & -t_{k-1}\\
    && \ddots & 0 & -t_{k} \\
&&&  t_{k} & 0 
%\\
% &&&& & t_1 & 0
  \earr\)\(\barr{l} g_{j} \\ g_{j+1} \\ \vdots  \\ g_{k} \\ g_{k+1} \earr\)
  = \(\barr{l} u_0 \\ u_{1} \\  \vdots  \\ u_{2d-1} \\ u_{2d} \earr\).
  \ee 
  Define  linear polynomials  $P_{t,i}(x) = P_{t,i}(x_1,\dots,x_d)$ for $1\leq i\leq d$ by
  the recurrence \[ P_{t,1}(x) = x_1\qquad\text{and}\qquad P_{t,i+1}(x) =x_{i+1} + (t_{k-2i+1}/t_{k-2i+2})P_{i}(x)\text{ for }1\leq i< d,\] and set $P_t(x) = -(t_j/t_{j+1}) P_{t,d}(x)$.  
It is  a straightforward exercise  to show by induction that the linear system (\ref{linsys}) has a solution $g \in \UT_n(\FF_q)$ if and only if $u_0 =  P_{t}(u_{2d},u_{2d-2},\dots,u_2)$, and this suffices to confirm our description of the coadjoint orbit of $\lambda_t$.

%%    If we let $e_0,\dots,e_{2d}$ denote the standard basis for $\FF_q^{2d+1}$, then it is apparent that that the odd columns of the $(2d+1)\times (2d+1)$ matrix on the left form a basis for $\FF_q\spanning \{ e_1,e_3,\dots,e_{2d-1}\}$.
%%In addition, if $u=(u_{2d},u_{2d-2},\dots,u_2,u_0) \in \FF_q^{d+1}$ then it follows easily by induction that the linear system 
%%  \[ \left [ \barr{rrrr|l}
%%  -t_{2d}  & & & & u_0 \\
%%  t_{2d-1} & -t_{2d-2} & && u_2 \\
%%  & t_{2d-3}  & \ddots & & \vdots \\
%%  & & \ddots & -t_2 & u_{2d-2} \\
%%   & & & t_1 & u_{2d}
%%   \earr
%%   \right]
%%   \quad
%%   \text{row reduces to}
%%   \quad
%%     \left [ \barr{rrrr|l}
%%0  & & & & P_{t,d}(u) \\
%%  t_{2d-1} & 0 & && P_{t,d-1}(u) \\
%%  & t_{2d-3}  & \ddots & & \vdots \\
%%  & & \ddots & 0 & P_{t,1}(u) \\
%%   & & & t_1 & P_{t,0}(u)
%%   \earr
%%   \right].\]
%%   
%   
%   We conclude from this that $
%\lambda_t + \sum_{i=0}^{2d} u_i e_{i+j,i+j+1}^* $ belongs to the coadjoint orbit of $\lambda_t$ %for $u_1,\dots,u_{2d}\in \FF_q$ 
%if and only if $P_{t,d}(u_{2d}, u_{2d-2},\dots,u_2,u_0) = 0$,
%and that every element in the coadjoint orbit of $\lambda_t$ has this form.
  
If $\kappa,\kappa' \in \FF_q\spanning\{e_{j,j+1}^*\}$, then $\lambda_t+\kappa$ and $\lambda_t+\kappa'$ belong to the same coadjoint orbit if and only if $\lambda_t$ and $\lambda_t+(\kappa-\kappa')$ belong to the same coadjoint orbit.  Our claim in (2) follows immediately.  For the second part, let $f_d(q)$ denote the cardinality of the set in (3) and note that $\lambda_t +\gamma$ belongs to the coadjoint orbit of $\lambda_t$ if and only if $ P_{t}(1,\dots,1) = 1$.  
It follows immediately that $f_1(q)=q-1$.  Observe that if $t' = (t_{j+2},t_{j+3},\dots,t_k) \in (\FF_q^\times)^{2d-2}$ then $P_{t}(1,\dots,1) = (t_j/t_{j+1}) (P_{t'}(1,\dots,1) - 1)$.
Consequently  if $t'$ is fixed, then the number of choices for the pair $(t_{j}, t_{j+1}) \in (\FF_q^\times)^2$ such that $\lambda_t +\gamma$ belongs to the coadjoint orbit of $\lambda_t$ is 
$q-1$ if $P_{t'}(1,\dots,1) \neq 1$ and 0 otherwise.
%\[\begin{cases} q-1,&\text{if $P_{t'}(1,\dots,1) \neq 1$,}\\0,&\text{otherwise.}\end{cases}\]
Hence $f_d(q) = (q-1)\( (q-1)^{2d-2}-f_{d-1}(q)\)$ for $d> 1$ and $f_1(q)=q-1$,   from which we deduce that $f_d(q)$ is equal to the given expression.
\end{proof}

Almost as a corollary, we have this second lemma.

\begin{lemma}\label{tech-lem}
Choose integers $j,k$ with $1 \leq j \leq k \leq n-2$, and let $
\lambda = \sum_{ i=j}^k t_i e_{i,i+2}^*
\in \fkt_n(\FF_q)^*$ for some 
% ,\qquad\text{for some 
$t_j,\dots,t_k \in \FF_q^\times$.  The following properties then hold:
%.}\]
\begin{enumerate}
\item[(1)] If $k-j$ is even then
%$\olfkl_\lambda = \olfks_\lambda = \{ X \in \fkt_n(\FF_q) : X_{j,j+1} = X_{j+2,j+3} = \dots = X_{k,k+1} = 0\}$ and 
the coadjoint orbit of $\lambda$ is $\lambda +\FF_q\spanning\left\{ e_{i+j,i+j+1}^* :0 \leq i \leq k-j+1\right\}$ and
$\xi_\lambda$ is the unique irreducible constituent of $\chi_\lambda$.

\item[(2)] If $k-j$ is odd then $\chi_\lambda$ has exactly $q$ irreducible constituents, given by the characters $\xi_{\lambda+\kappa}$ for $\kappa \in \FF_q\spanning\{ e_{j,j+1}^* \}$.
\end{enumerate}
\end{lemma}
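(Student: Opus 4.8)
The plan is to reduce both parts to a single question — how the two-sided orbit $G\lambda G$ breaks up into coadjoint orbits — and then to read off the answer from the linear-algebra computation already set up for Lemma~\ref{tech-lem1}. Since $\lambda$ is supported on the second superdiagonal we have $\ker\lambda\supset(\fkt_n(\FF_q))^3$, so Proposition~\ref{heis-cor} together with Lemma~\ref{heis-lem1} applies: every irreducible constituent of $\chi_\lambda$ is a character $\xi_\mu$ with $\mu\in G\lambda G$, all such $\xi_\mu$ are irreducible, and they share a common degree. By the Kirillov formula (\ref{xi-formula}) an irreducible $\xi_\mu$ equals $\frac{1}{\sqrt{|\mu^G|}}\sum_{\nu\in\mu^G}\theta_\nu$, so $\xi_\mu(1)=\sqrt{|\mu^G|}$ and $\xi_\mu=\xi_{\mu'}$ precisely when $\mu^G=(\mu')^G$. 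Because $G\lambda G$ is conjugation-invariant it is a union of coadjoint orbits, and the common degree forces all of these orbits to have the same cardinality $|\lambda^G|$. Hence the distinct constituents of $\chi_\lambda$ correspond bijectively to the coadjoint orbits contained in $G\lambda G$, and there are exactly $|G\lambda G|/|\lambda^G|$ of them.

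Next I would compute $G\lambda G$ explicitly. Expanding $g^{-1}=1-N+N^2-\cdots$ and using that $\lambda$ annihilates everything off the second superdiagonal, a short calculation gives $g\lambda=\lambda-\sum_{i=j}^k t_i\,g_{i,i+1}\,e_{i+1,i+2}^*$ and $\lambda g=\lambda-\sum_{i=j}^k t_i\,g_{i+1,i+2}\,e_{i,i+1}^*$, while any functional supported on the first superdiagonal is fixed under both actions. Since each $t_i\neq0$, the left action sweeps out the positions $j+1,\dots,k+1$ and the right action the positions $j,\dots,k$, so together
\[ G\lambda G \;=\; \lambda + \FF_q\text{-span}\bigl\{ e_{i,i+1}^* : j\leq i\leq k+1\bigr\},\]
a set of size $q^{k-j+2}$. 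This description is uniform in the parity of $k-j$.

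For part (2), where $k-j$ is odd and $d=(k-j+1)/2$, Lemma~\ref{tech-lem1}(1) gives $|\lambda^G|=q^{2d}=q^{k-j+1}$, so $G\lambda G$ splits into exactly $q^{k-j+2}/q^{k-j+1}=q$ coadjoint orbits. The $q$ functionals $\lambda+\kappa$ with $\kappa\in\FF_q\text{-span}\{e_{j,j+1}^*\}$ all lie in $G\lambda G$ (the position $(j,j+1)$ is among those allowed above), and by Lemma~\ref{tech-lem1}(2) they lie in pairwise distinct coadjoint orbits; being $q$ in number they therefore represent all of the orbits. Consequently the $q$ distinct characters $\xi_{\lambda+\kappa}$ are exactly the irreducible constituents of $\chi_\lambda$, which is the claim.

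For part (1), where $k-j$ is even, I would rerun the coadjoint-orbit computation from the proof of Lemma~\ref{tech-lem1}: the equation $g^{-1}\lambda g=\lambda+\sum_i u_i e_{i+j,i+j+1}^*$ is again governed by the tridiagonal skew-symmetric system (\ref{linsys}), but now of even size $(k-j+2)\times(k-j+2)$. Such a matrix is nonsingular, its determinant being the square of a Pfaffian equal to a nonzero product of the $t_i$; hence the map $g\mapsto(u_i)$ is surjective and the coadjoint orbit of $\lambda$ is the full set $\lambda+\FF_q\text{-span}\{e_{i+j,i+j+1}^*:0\le i\le k-j+1\}$, which coincides with $G\lambda G$. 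Thus $G\lambda G$ is a single coadjoint orbit and $\xi_\lambda$ is the unique irreducible constituent of $\chi_\lambda$. The one genuinely computational point — and the main obstacle — is the parity-dependent rank of the tridiagonal skew-symmetric matrix: one must verify full rank (surjectivity) in the even case and reconcile it with the corank-one behaviour that produced the single linear constraint $u_0=P_t(\dots)$ in the odd case of Lemma~\ref{tech-lem1}.
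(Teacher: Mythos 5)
Your proof is correct, but it routes through the orbit geometry rather than through the paper's subalgebra computation, and the two arguments are genuinely different in where they do the work. The paper verifies the hypothesis of Lemma \ref{heis-lem1} head-on, by computing $\fk l_\lambda^1$, $\fk s_\lambda^1$, and $\olfkl_\lambda=\olfks_\lambda$ explicitly (the ``straightforward if tedious exercise''), and then reads off the constituent count ($1$ in the even case, $q$ in the odd case) from the formula $\frac{|\olfks_\lambda|^2}{|\fk l_\lambda^1||\fk s_\lambda^1|}$; in the even case it then reverses direction and deduces the coadjoint orbit from character theory (all $\xi_\mu$ for $\mu\in G\lambda G$ coincide, so by (\ref{xi-formula}) and linear independence of the $\theta_\mu$ the coadjoint and two-sided orbits agree), leaving the identification of $G\lambda G$ as an exercise. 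You go the opposite way: you compute $G\lambda G=\lambda+\FF_q\spanning\{e_{i,i+1}^*: j\leq i\leq k+1\}$ directly, count constituents as $|G\lambda G|/|\lambda^G|$ (using the common degree to force all coadjoint orbits in $G\lambda G$ to have equal size), and settle the even case by showing the even-size alternating tridiagonal system (\ref{linsys}) is nonsingular via its Pfaffian $\pm t_jt_{j+2}\cdots t_k$ --- so orbit geometry implies the character statement rather than conversely. This buys you a cleaner treatment (no subalgebra towers, and the parity dichotomy is explained structurally by the singular/nonsingular dichotomy for odd/even alternating matrices), at the cost of one extra linear-algebra computation the paper avoids. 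One point to tighten: Lemma \ref{heis-lem1} has the hypothesis $\ols_\lambda=\oll_\lambda$, which you never verify directly; it does hold here, but your citation of Proposition \ref{heis-cor} really invokes the quasi-monomial argument inside its proof (which shows some $\mu\in G\lambda G$ has $\oll_\mu=\ols_\mu$, and hence, since $\ker\lambda\supset\fkn^3$, that $\oll_\lambda=\ols_\lambda$ as well); you should say this explicitly, since otherwise the claims ``all such $\xi_\mu$ are irreducible, with common degree'' are unsupported. With that sentence added, the argument is complete.
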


\begin{proof}
It is a straightforward if tedious exercise to show that 
\[ \ba \fk l_\lambda^1 &= \left\{ X \in \fkt_n(\FF_q) : X_{j,j+1} = X_{j+1,j+2} = \dots = X_{k,k+1} = 0\right\}, \\
\fk s_\lambda^1 & = \left\{ X \in \fkt_n(\FF_q) : X_{k,k+1} = 0\right\},
\ea
\] and 
\[ \olfkl_\lambda = \olfks_\lambda =
 \begin{cases}
\left\{ X \in \fkt_n(\FF_q) : X_{j,j+1} = X_{j+2,j+3} = \dots = X_{k,k+1} = 0\right\}, &\text{if $k-j$ is even}; 
\\
\left\{ X \in \fkt_n(\FF_q) : X_{j+1,j+2} = X_{j+3,j+4} = \dots = X_{k,k+1} = 0\right\}, &\text{if $k-j$ is odd}.
\end{cases}
\] 
From this, it follows by Lemma \ref{heis-lem1} that $\chi_\lambda$ has a unique irreducible constituent (necessarily given by $\xi_\lambda$) if $k-j$ is even and exactly $q$ irreducible constituents if $k-j$ is odd.  In the even case, the irreducible characters $\xi_\mu$ for $\mu$ in the two-sided orbit of $\lambda$ are all equal, so from the formula (\ref{xi-formula}) we deduce that the coadjoint orbit and  two-sided orbit of $\lambda$  coincide.  Checking that the two-sided orbit is $\lambda +\FF_q\spanning\left\{ e_{i+j,i+j+1}^* :0 \leq i \leq k-j+1\right\}$ is an exercise in linear algebra, similar to but much more straightforward than the proof of our previous lemma.

Assume $k-j$ is odd.  Then the $q$ characters $\xi_{\lambda+\kappa}$ for $\kappa \in \FF_q\spanning\{ e_{j,j+1}^* \}$ are all irreducible constituents of $\chi_\lambda$ since $\lambda + \kappa$ lies in the right $\UT_n(\FF_q)$-orbit of $\lambda$, so it suffices to show that these characters are distinct.   
Noting the formula (\ref{xi-formula}) above and the fact that the functions $\{\theta_\mu\}$ are linearly independent, we deduce that the irreducible characters $\xi_{\lambda+\kappa}$ are distinct if and only if
the coadjoint orbits of $\lambda+\kappa$ are pairwise disjoint, which was shown in the previous lemma.
\end{proof}

%
%We conclude this section by recalling two well-known properties of the supercharacters of $\UT_n(\FF_q)$ noted \cite[Section 2.3]{Thiem}.
%This lemma involves one new definition:
%a \emph{crossing} of $\Lambda \in \sP(n,\FF_q)$ is a pair of arcs $(i,k),(j,l) \in \Arc(\Lambda)$ with $i<j<k<l$.
%
%\begin{lemma} The following properties hold:
%\begin{enumerate}
%\item[(i)] Suppose $\{\Lambda_i : 1\leq i \leq m\}$ is a subset of $\sP(n,\FF_q)$ have the property that for all $i\neq j$, if $(k,\ell) \in \Arc(\Lambda_i)$ and $(k',\ell') \in \Arc(\Lambda_j)$ then $k\neq k'$ and $\ell\neq \ell'$.  Then 
%\[\label{prod-obs}\prod_{i=1}^m \chi_{\Lambda_i}  = \chi_{\Lambda_1+\dots +\Lambda_m}, \]
%where $\Lambda_1+\dots +\Lambda_m \in \fkt_n(\FF_q)^*$ is the sum of the partitions $\Lambda_i$ viewed as elements of $\fkt_n(\FF_q)^*$. Note that this sum corresponds to
%the $\FF_q$-labeled partition of $[n]$ whose arc set is the disjoint union $\bigcup_{i=1}^m \Arc(\Lambda_i)$ and whose labeling map is defined in the obvious way.
%
%\item[(ii)]   If $\Lambda \in \sP(n,\FF_q)$ then 
%$\langle \chi_\Lambda,\chi_\Lambda \rangle = q^k$ where $k$ is the number of crossings of $\Lambda \in \sP(n,\FF_q)$.
%\end{enumerate}
%\end{lemma}

\section{Heisenberg characters and lattice paths}
\label{heis-sect}

In this section we use the preceding results to derive a bijection between the set of Heisenberg characters of $\UT_n(\FF_q)$ and the set of $\FF_q$-labeled lattice paths $\sLL(n,\FF_q)$. Leading to this, we first establish a few properties of the polynomials counting the five families of $\FF_q$-labeled lattice paths 
defined in the introduction.

Let 
  $\D(a,b)$, $\H(a,b)$, and $\I(a,b)$ denote the respective numbers of (unlabeled) lattice paths ending at $(a,b) \in \ZZ^2$ with steps in the sets 
  \[S = \{ (1,0),(1,1),(0,1)\},\quad S'=\{(1,0),(1,1),(0,1),(0,2)\},\quad\text{and}\quad S''=\{(2,1),(1,2),(0,1)\}.\] The integers $\D(a,b)$ are  the well-known {Delannoy numbers} by definition.  By construction, the polynomials 
\begin{enumerate}
\item[] $\ds\Del{n+1}{x} \omdef = \sum_{k=0}^n \D(n-k,k) x^{k}$,

\item[] $\ds\preHe{n+1}{x} \omdef = \sum_{k=0}^n \H(n-k,k) x^{k}$,

\item[] $\ds\preIn{n+1}{x}\omdef =  \sum_{k=0}^n \I(n-k,k) x^{k}$,

\end{enumerate}
%\[  \ba  \Del{n+1}{r} &\omdef = \sum_{k=0}^n \D(n-k,k) r^{k},\\
%   \preHe{n+1}{r} &\omdef = \sum_{k=0}^n \H(n-k,k) r^{k}, \\
%    \preIn{n+1}{r} &\omdef =  \sum_{k=0}^n \I(n-k,k) r^{k},
%\ea     \]
 give the respective cardinalities of $\sK(n+1,\FF_q)$, $\sL(n+1,\FF_q)$, and $\sM(n+1,\FF_q)$, when $n\geq 0$ and $x=q-1$.  We set  $\Del{-n}{x} = \preHe{-n}{x} =\preIn{-n}{x} = 0$ for $n\geq 0$ so that this relationship holds for all integers $n$, and note 
the ordinary generating functions
\begin{enumerate}
\item[] $\ds\sum_{n\geq 0} \Del{n}{x} z^n= \frac{z}{1 - (x+1)z - xz^2},$

\item[] $\ds\sum_{n\geq 0} \preHe{n}{x} z^n = \frac{z}{1 - (x+1)z - x(x+1) z^2}$,

\item[] $ \ds\sum_{n\geq 0} \cI_n(x) z^n = \frac{z}{1-xz -x(x+1)z^3}.$
\end{enumerate}
%\[\ba
%\sum_{n\geq 0} \Del{n}{r} x^n&= \frac{x}{1 - (r+1)x - rx^2},
% \\
%\sum_{n\geq 0} \preHe{n}{r} x^n& = \frac{x}{1 - (r+1)x - r(r+1) x^2},
%\\
% \sum_{n\geq 0} \cI_n(r) x^n &= \frac{x}{1-rx -r(r+1)x^3},
%\ea
%\] 
%are then apparent.
The first of these suffices to show that $\{\Del{n}{1}\}_{n=0}^\infty=(0,1,2,5,12,29,70,\dots)$ is the sequence of \emph{Pell numbers}  \cite[A000129]{OEIS}, which explains our notation.

The following proposition gives more explicit formulas for these polynomials.

\begin{proposition}\label{preHe-formulas} For   integers $a,b,n\geq 0$, the following identities hold:
\begin{enumerate}
\item[(1)] $\ds
\Del{n+1}{x}  =  \sum_{k=0}^{\lfloor n/2\rfloor}
 \binom{n-k}{k} x^k (1+x)^{n-2k}$ and $\ds\D(a,b) = \sum_{k\geq 0} \binom{a+b-k}{k} \binom{a+b-2k}{b-k}$.

\item[(2)] $\ds
\preHe{n+1}{x} = \sum_{k=0}^{\lfloor n/2 \rfloor}
 \binom{n-k}{k} x^k (1+x)^{n-k}$ and $\ds\H(a,b) = \sum_{k\geq 0} \binom{k}{a+b-k} \binom{k}{a}$.

\item[(3)] $\ds
\preIn{n+1}{x} = \sum_{k=0}^{\lfloor n/3 \rfloor}
 \binom{n-2k}{k} x^{n-2k} (1+x)^k$ and $\ds\I(a,b) = \sum_{k\geq 0} \binom{a+b-2k}{k} \binom{k}{a-k}$.

\end{enumerate}
\end{proposition}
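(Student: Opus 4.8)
The plan is to establish the three polynomial identities first, working directly from the generating functions recorded above, and then to obtain the three coefficient formulas for $\D(a,b)$, $\H(a,b)$, and $\I(a,b)$ by reading off a single coefficient of the relevant polynomial. This reduction is immediate: since $\Del{n+1}{x}=\sum_{k=0}^n \D(n-k,k)x^k$, the integer $\D(a,b)$ is precisely the coefficient of $x^b$ in $\Del{a+b+1}{x}$, and analogously $\H(a,b)=[x^b]\,\preHe{a+b+1}{x}$ and $\I(a,b)=[x^b]\,\preIn{a+b+1}{x}$. Thus every statement in the proposition follows once the closed forms for the three polynomials are known and their coefficients of $x^b$ (with $n=a+b$) are extracted.

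For the polynomial identities I would expand each generating function as a geometric series and apply the binomial theorem. For $\Del{n+1}{x}$, write $\frac{z}{1-(x+1)z-xz^2}=z\sum_{m\ge 0}\big((x+1)z+xz^2\big)^m$ and expand the $m$th summand as $\sum_{j}\binom{m}{j}(x+1)^{m-j}x^j z^{m+j}$; the coefficient of $z^{n+1}$ gathers the terms with $m+j=n$, and the constraint $j\le m$ (equivalently, the vanishing of $\binom{m}{j}$ outside the usual range) forces $j\le\lfloor n/2\rfloor$, yielding $\sum_{k}\binom{n-k}{k}x^k(1+x)^{n-2k}$. For $\preHe{n+1}{x}$ the key observation is the factorization $(x+1)z+x(x+1)z^2=(x+1)z(1+xz)$, so the $m$th summand becomes $(x+1)^m z^m(1+xz)^m$ and the same extraction gives $\sum_k\binom{n-k}{k}x^k(1+x)^{n-k}$. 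For $\preIn{n+1}{x}$ one factors $xz+x(x+1)z^3=xz\big(1+(x+1)z^2\big)$, expands $\big(1+(x+1)z^2\big)^m$ as $\sum_j\binom{m}{j}(x+1)^j z^{2j}$, and extracts $[z^{n+1}]$; here $m+2j=n$ forces $j\le\lfloor n/3\rfloor$, producing $\sum_k\binom{n-2k}{k}x^{n-2k}(1+x)^k$. Each of these is routine bookkeeping once the correct factorization is spotted.

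With the polynomials in hand, the coefficient formulas follow by expanding the remaining power of $(1+x)$ and collecting the coefficient of $x^b$ with $n=a+b$. For $\D$ this is transparent: $[x^b]\sum_k\binom{n-k}{k}x^k(1+x)^{n-2k}=\sum_k\binom{n-k}{k}\binom{n-2k}{b-k}$, which is exactly $\sum_k\binom{a+b-k}{k}\binom{a+b-2k}{b-k}$. The case of $\I$ is equally direct after rewriting $\binom{k}{2k-a}=\binom{k}{a-k}$. The one genuinely fiddly point, and the step I expect to demand the most care, is $\H$: the same expansion produces $\H(a,b)=\sum_k\binom{a+b-k}{k}\binom{a+b-k}{b-k}$, which agrees with the stated $\sum_k\binom{k}{a+b-k}\binom{k}{a}$ only after rewriting $\binom{a+b-k}{b-k}=\binom{a+b-k}{a}$ and performing the index reflection $k\mapsto a+b-k$; keeping track of the binomial conventions so that this reindexing is valid term by term is where I would be most careful. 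As an independent check on all three coefficient formulas, I would classify the lattice paths ending at $(a,b)$ according to the multiplicities of each step type; this produces a multinomial coefficient (parametrized by the number of diagonal steps for $\D$, and by the number of $x$-increasing steps for $\I$) that factors directly into the claimed product of two binomial coefficients, giving a purely combinatorial derivation parallel to the generating-function one.
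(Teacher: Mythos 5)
Your proof is correct, but its main step runs along a genuinely different track from the paper's. The paper establishes the three polynomial identities combinatorially rather than from the generating functions: it observes that the $k$th term of each sum is itself the number of $\FF_q$-labeled paths ending on the line $x+y=n$ having exactly $k$ steps in a set $R$ and all remaining steps in a set $T$, where $(R,T)$ equals $(\{(1,1)\},\ \{(1,0),(0,1)\})$ in case (1), $(\{(1,1),(0,2)\},\ \{(1,0),(0,1)\})$ in case (2), and $(\{(2,1),(1,2)\},\ \{(0,1)\})$ in case (3); for instance, in case (2) such a path has $n-2k$ steps in $T$, hence $\binom{n-k}{k}$ interleavings, with each $R$-step carrying weight $x+x^2=x(1+x)$ and each $T$-step weight $1+x$, which multiplies out to $\binom{n-k}{k}x^k(1+x)^{n-k}$. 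The coefficient-extraction step producing $\D(a,b)$, $\H(a,b)$, $\I(a,b)$ is then identical in both arguments, so the ``independent check'' you sketch at the end is in fact the paper's actual proof, and your primary argument---geometric-series expansion after the factorizations $(x+1)z(1+xz)$ and $xz\bigl(1+(x+1)z^2\bigr)$---is the part that differs. What each approach buys: the paper's count is self-contained from the definitions of the path families, whereas your route leans on the generating functions, which the paper merely \emph{notes} without proof just before the proposition; these do follow routinely from the one-step decomposition of a path, so there is no circularity, but the dependency should be acknowledged. In return, your expansion makes the bookkeeping mechanical---the bounds $\lfloor n/2\rfloor$ and $\lfloor n/3\rfloor$ drop out of the constraint $j\le m$ automatically---and your explicit treatment of the $\H(a,b)$ case, rewriting $\binom{a+b-k}{b-k}=\binom{a+b-k}{a}$ and reflecting $k\mapsto a+b-k$ (valid term by term because out-of-range binomial coefficients vanish), supplies precisely the detail that the paper compresses into ``extracting the coefficients of $x^k$ in these sums gives our formulas.''
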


\begin{proof}
Supposing $x=q-1$, it is a routine exercise to show that the $k$th term in each of the three sums gives the number of $\FF_q$-labeled lattice paths ending on the line $x+y=n$ with $k$ steps in $R$ and the rest of its steps in $T$, where 
\[ R = \begin{cases} \{(1,1)\}, &\text{in part (1)}, \\
 \{(1,1), (0,2)\}, &\text{in part (2)}, \\
\{  (2,1), (1,2)\}, &\text{in part (3)},
\end{cases}
\qquad\text{and}\qquad 
T = \begin{cases} \{(1,0),(0,1)\}, &\text{in part (1)}, \\
 \{(1,0), (0,1)\}, &\text{in part (2)}, \\
\{  (0,1)\}, &\text{in part (3)}.
\end{cases}
\]
This observation is enough to establish our formulas for $\Del{n+1}{x}$, $\preHe{n+1}{x}$, and $\preIn{n+1}{x}$. Extracting the coefficients of $x^k$ in these sums gives our formulas  for  $\D(a,b)$, $\H(a,b)$, and $\I(a,b)$. \end{proof}

For each integer $n$, we additionally define \[ 
  \He{n}{x} \omdef = \preHe{n}{x} - x^2\preHe{n-2}{x}
 \qquad\text{and}\qquad
  \In{n}{x} \omdef = x\preIn{n-1}{x} + x\preIn{n-2}{x},
  \]
 so that 
 $|\sLL(n,\FF_q)| = \He{n}{q-1}$ and $|\sMM(n,\FF_q)| = \In{n}{q-1}$. Then $\He{1}{x} = 1$ and $\In{1}{x}  = 0$, and  the preceding proposition with the standard recurrence for the binomial coefficients implies the following formulas for $n\geq 2$.  
 
\begin{corollary}\label{He-formulas} For each  integer $n\geq 1$, the following identities hold:
\begin{enumerate}
\item[(1)] $\ds\He{n+1}{x} = \sum_{k=0}^{\lfloor n/2 \rfloor} \(\binom{n-k}{k} + \binom{n-k-1}{k}x \) x^k (1+x)^{n-k-1}$.
\item[(2)]
$\ds\In{n+1}{x} = \sum_{k=0}^{\lfloor (n-1)/3 \rfloor} \(\binom{n-2k-2}{k} + \binom{n-2k-1}{k}x\) x^{n-2k-1} (1+x)^k.$
\end{enumerate}
\end{corollary}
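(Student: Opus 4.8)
The plan is to prove both identities directly from the defining relations $\He{n+1}{x}=\preHe{n+1}{x}-x^2\preHe{n-1}{x}$ and $\In{n+1}{x}=x\preIn{n}{x}+x\preIn{n-1}{x}$, substituting the closed forms for $\preHe{}{}$ and $\preIn{}{}$ supplied by Proposition \ref{preHe-formulas} and then reindexing. Throughout I adopt the convention $\binom{m}{k}=0$ unless $0\le k\le m$, under which all out-of-range terms vanish automatically; the index-$1$ values $\He{1}{x}=1$ and $\In{1}{x}=0$ lie outside the claimed formulas and are computed directly from the conventions $\preHe{0}{x}=\preIn{0}{x}=0$.

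For part (1), I would begin with $\preHe{n+1}{x}=\sum_{k}\binom{n-k}{k}x^k(1+x)^{n-k}$ and split off one factor of $(1+x)$, writing $(1+x)^{n-k}=(1+x)(1+x)^{n-k-1}$. Distributing, $\preHe{n+1}{x}$ becomes $\sum_k\binom{n-k}{k}x^k(1+x)^{n-k-1}$ — already the $\binom{n-k}{k}$ half of the target — plus a residual sum $\sum_k\binom{n-k}{k}x^{k+1}(1+x)^{n-k-1}$. To the residual I apply Pascal's rule $\binom{n-k}{k}=\binom{n-k-1}{k}+\binom{n-k-1}{k-1}$: the first summand yields the $\binom{n-k-1}{k}x$ half of the target, while the second, after the shift $k\mapsto k+1$, collapses to $\sum_j\binom{n-j-2}{j}x^{j+2}(1+x)^{n-j-2}=x^2\preHe{n-1}{x}$. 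Subtracting $x^2\preHe{n-1}{x}$ cancels this leftover exactly and leaves the claimed expression.

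Part (2) requires no binomial recurrence at all. Substituting the closed form for $\preIn{}{}$ gives $x\preIn{n}{x}=\sum_k\binom{n-2k-1}{k}x^{n-2k}(1+x)^k$ and $x\preIn{n-1}{x}=\sum_k\binom{n-2k-2}{k}x^{n-2k-1}(1+x)^k$, and these two sums are termwise precisely the $\binom{n-2k-1}{k}x$ and $\binom{n-2k-2}{k}$ halves of the claimed formula for $\In{n+1}{x}$. The nominal upper limits $\lfloor(n-1)/3\rfloor$ and $\lfloor(n-2)/3\rfloor$ differ, but any extra terms carry a binomial $\binom{n-2k-2}{k}$ whose top is smaller than its bottom and hence vanish, so the two ranges may be harmonized freely.

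I expect no conceptual difficulty; the only real care lies in the index bookkeeping — verifying that each degenerate boundary term (the $k=0$ contribution $\binom{n-1}{-1}$ produced by Pascal, the out-of-range binomials in part (2), and the index-$0$ polynomials that appear when $n=1$) genuinely vanishes under the stated conventions, so that the reindexed sums line up with $x^2\preHe{n-1}{x}$ and with $x\preIn{n}{x}+x\preIn{n-1}{x}$ without any spurious surviving terms.
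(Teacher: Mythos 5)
Your proof is correct and is essentially the paper's own argument: the paper derives Corollary \ref{He-formulas} in one line from Proposition \ref{preHe-formulas} ``with the standard recurrence for the binomial coefficients,'' which is precisely your substitution of the closed forms followed by Pascal's rule (needed only in part (1)) and reindexing, with the defining relations $\He{n+1}{x}=\preHe{n+1}{x}-x^2\preHe{n-1}{x}$ and $\In{n+1}{x}=x\preIn{n}{x}+x\preIn{n-1}{x}$ quoted correctly. The degenerate boundary terms you flag (the $\binom{n-1}{-1}$ from Pascal, the out-of-range binomials reconciling the summation limits, and the $n=1$ case via $\preHe{0}{x}=\preIn{0}{x}=0$) all vanish as claimed, so the details are complete.
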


Thus  $\He{n}{x}$ and $\In{n}{x}$ both have nonnegative integer coefficients.
The preceding results show that   the leading coefficient of $\preHe{n+1}{x}$ and $\He{n+2}{x}$ is $\sum_k\binom{n-k}{k}$, a well-known formula for the $(n+1)$th Fibonacci number.  We note this briefly as a  second corollary.

\begin{corollary}\label{fib}  Let $\{f_n\}_{n=0}^\infty = (0,1,1,2,3,5,8,\dots) $ denote the Fibonacci numbers.  
Then $f_n$ is
the leading coefficient of  $\preHe{n}{x}$ and $\He{n+1}{x}$  for $n\geq 1$. 
\end{corollary}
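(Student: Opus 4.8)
The plan is to read the two leading coefficients directly off the closed forms already established in Proposition \ref{preHe-formulas}(2) and Corollary \ref{He-formulas}(1), and then to recognize the resulting sum as a standard expression for the Fibonacci numbers. The only external fact I will use is the classical identity $f_n = \sum_{k\geq 0}\binom{n-1-k}{k}$, which one verifies in a line by checking that the right-hand side satisfies the Fibonacci recurrence via Pascal's rule $\binom{m}{k}=\binom{m-1}{k}+\binom{m-1}{k-1}$ together with the correct base values.

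First I would treat $\preHe{n}{x}$. Substituting $n\mapsto n-1$ in Proposition \ref{preHe-formulas}(2) gives
\[ \preHe{n}{x}=\sum_{k=0}^{\lfloor(n-1)/2\rfloor}\binom{n-1-k}{k}\,x^k(1+x)^{n-1-k}. \]
Each summand is $x^k$ times $(1+x)^{n-1-k}$ scaled by a positive constant, so it has degree exactly $n-1$; hence $\preHe{n}{x}$ has degree $n-1$, and since the coefficient of $x^{n-1}$ in the $k$th summand is $\binom{n-1-k}{k}\geq 0$ there is no cancellation. The leading coefficient is therefore $\sum_k\binom{n-1-k}{k}=f_n$, as claimed.

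Next I would treat $\He{n+1}{x}$, whose formula from Corollary \ref{He-formulas}(1) reads
\[ \He{n+1}{x}=\sum_{k=0}^{\lfloor n/2\rfloor}\Bigl(\tbinom{n-k}{k}+\tbinom{n-k-1}{k}x\Bigr)\,x^k(1+x)^{n-k-1}. \]
The key observation is that inside the $k$th summand the constant part $\binom{n-k}{k}$ contributes only in degree $n-1$, whereas the linear part $\binom{n-k-1}{k}x$ supplies the unique degree-$n$ monomial $\binom{n-k-1}{k}x^n$. Thus $\He{n+1}{x}$ has degree $n$, and extracting the coefficient of $x^n$ (again with no cancellation, by nonnegativity of the binomial coefficients) yields $\sum_k\binom{n-1-k}{k}=f_n$. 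Note that any term with $k>(n-1)/2$ has $\binom{n-1-k}{k}=0$ and so drops out; this is precisely why $\preHe{n}{x}$ and $\He{n+1}{x}$ share the same leading coefficient even though their degrees, $n-1$ and $n$, differ.

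The argument is essentially bookkeeping, so I do not expect a genuine obstacle; the only points requiring a moment's care are confirming that each summand attains the stated degree without cancellation (guaranteed by the nonnegativity of binomial coefficients) and, in the case of $\He{n+1}{x}$, correctly isolating the linear part of the factor $\binom{n-k}{k}+\binom{n-k-1}{k}x$ as the source of the top-degree term. The range $n\geq 1$ is covered by these formulas directly, with the smallest instances $\preHe{1}{x}=1$ and $\He{2}{x}=1+x$ both having leading coefficient $f_1=1$, so no separate base cases are needed.
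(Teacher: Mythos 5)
Your proposal is correct and follows essentially the same route as the paper: the paper likewise reads the leading coefficients directly from Proposition \ref{preHe-formulas}(2) and Corollary \ref{He-formulas}(1), identifying both as the diagonal binomial sum $\sum_k \binom{n-1-k}{k}$, which is the standard formula for $f_n$. Your extra bookkeeping (degree count, no cancellation by nonnegativity, isolating the linear part of $\binom{n-k}{k}+\binom{n-k-1}{k}x$ as the source of the top-degree term in $\He{n+1}{x}$) just makes explicit what the paper leaves as a remark.
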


\begin{remark}
The second-to-leading coefficients of these polynomials, counting lattice paths ending on the line $x=1$, are more obscure but have also appeared in other places. 
For example, the coefficients 
 $\{ \H(1,n)\}_{n=0}^\infty = (1,3,7,15,30,58,109,\dots)$ of $x^n$ in $\preHe{n+2}{x}$ form the sequence \cite[A023610]{OEIS} given by convolving the Fibonacci numbers with a shifted version of themselves.
On the other hand, the coefficients  $\{ \H(1,n)-\H(1,n-2)\}_{n=0}^\infty 
 = (1,3,6,12,23,43,79,\dots)$ of $x^n$ in $\He{n+2}{x}$
provide a sequence  \cite[A055244]{OEIS} counting certain stackings of $n$ squares on a double staircase.  Turban calls such configurations ``lattice animals'' and studies their enumeration in \cite{Turban}.  It is a reasonably straightforward exercise to show using \cite[Eq.\ (3.9)]{Turban} that the numbers which Turban labels $G_k$ have $G_{k+1} = \H(1,k) - \H(1,k-2)$ for $k\geq 0$. 
\end{remark}

To establish the promised correspondence between elements of $\sLL(n,\FF_q)$ and Heisenberg characters of $\UT_n(\FF_q)$, we use the symbols $\r$, $\d$, $\u$, $\uu$ to denote
 the steps $(1,0)$, $(1,1)$, $(0,1)$, $(0,2)$
and we indicate labels with subscripts.  Thus, we may list the paths in $\sL(4,\FF_q)$ 
ending at $(0,3)$ as  $(\uu_{x,y}, \u_z)$, $(\u_x, \uu_{y,z})$,  $(\u_x, \u_y, \u_z)$
%\[ \ba \text{Paths ending at (0,3): }& (\uu_{x,y}, \u_z),\ (\u_x, \uu_{y,z}),\ (\u_x, \u_y, \u_z), 
%\\
% \text{Paths ending at (1,2): }& (\uu_{x,y}, \r),\ (\r, \uu_{x,y}),\ (\u_x, \u_y, \r),\ (\u_x, \r, \u_y),\\& (\r, \u_x, \u_y),\ (\u_x, \d_y),\ (\d_x, \u_y), 
%\\
% \text{Paths ending at (2,1): }& (\u_x,\r,\r),\ (\r, \u_x, \r),\ (\r, \r, \u_x),\ (\d_x, \r),\ (\r, \d_x), 
%\\
% \text{Paths ending at (3,0): }&
%(\r,\r,\r),
%\ea\] 
where $x,y,z \in \FF_q^\times$ are arbitrary; note that the path  $(\uu_{x,y}, \u_z)$ is excluded from $\sLL(4,\FF_q)$.

Define a  map $\sLL(n,\FF_q) \to \fkt_n(\FF_q)^*$ in the following way.
Fix a labeled path
\[ P = (s_1,s_2,\dots,s_\ell) \in \sLL(n,\FF_q).\]  %Denote the $k$th step of $P$ by $s_k$ and write $x_k \in (\FF_q^\times)^{j_k}$ for the corresponding label.  
Let $d_k(P)$ be the sum of the coordinates of  the starting point of the $k$th step of $P$, so that this starting point lies on the line $x+y = d_k(P)$.  Now, %writing $e_{ij}^*: \fkt_n(\FF_q) \to \FF_q$  for the standard linear functional $X \mapsto X_{ij}$,  
define $\lambda_{P,k} \in \fkt_n(\FF_q)^*$ for $k=1,\dots,\ell$ by
\[\lambda_{P,k}  = 
\begin{cases} 
0, &\text{if $s_k =\ \r$};\\
t e_{i,i+1}^*, &\text{if $s_k =\ \u_t$ for some $t \in \FF_q^\times$, where $i=d_k(P)+1$};\\
t e_{i,i+2}^*, &\text{if $s_k =\ \d_t$ for some $t \in \FF_q^\times$, where $i=d_k(P)+1$};\\
t e_{i-1,i+1}^* + u e_{i,i+2}^*, &\text{if $s_k =\ \uu_{t,u}$ for some $t,u \in \FF_q^\times$, where $i=d_k(P)+1$}.
\end{cases}
\]
Finally let $\lambda_P \in \fkt_n(\FF_q)^*$ be the sum 
$ \lambda_P = \lambda_{P,1} + \lambda_{P,2} + \dots +\lambda_{P,\ell}$. %\sum_{k=1}^\ell \lambda_{P,k}$.  
As an example, if 
\[ P = (\r,\d_a,\u_b,\uu_{c,d})\qquad\text{then}\qquad \lambda_P = a e_{2,4}^* + b e_{4,5}^* + ce_{4,6}^* + d e_{5,7}^*.\] 
Recall the definition of the characters $\xi_\lambda$ in Section \ref{xi}, and write $\xi_P$ for the character indexed by $\lambda_P \in \fkt_n(\FF_q)^*$ for $P \in \sLL(n,\FF_q)$.  
We now assert the following.

\begin{theorem}\label{heis-thm} %Fix a positive integer $n$ and a prime power $q>1$.  Then 
The map  
%$P \mapsto \xi_P$ is a bijection $\sLL(n,\FF_q^\times) \to
%  \{ \text{Heisenberg characters of $\UT_n(\FF_q)$}\bigr \}$.
\[\barr{ccc} \sLL(n,\FF_q) &\to
&
  \bigl\{ \text{Heisenberg characters of $\UT_n(\FF_q)$}\bigr \}  \\
P & \mapsto &  \xi_{P}
\earr\]
is a bijection, and the number of Heisenberg characters of $\UT_n(\FF_q)$
 is $\He{n}{q-1}$.
\end{theorem}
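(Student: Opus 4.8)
The plan is to recognise the Heisenberg characters as exactly the irreducible characters $\xi_\mu$ attached to functionals $\mu \in \fkt_n(\FF_q)^*$ supported on the first two superdiagonals, and then to show that $P\mapsto \lambda_P$ selects one representative of each from $\sLL(n,\FF_q)$. I would first assemble the reductions already in hand. By Proposition \ref{heis-cor} every Heisenberg character is some $\xi_\mu$, and by Lemma \ref{heis-obs}(ii)--(iii) one may take $\ker\mu \supset \fkt_n(\FF_q)^3$, i.e. $\mu \in \FF_q\spanning\{e_{i,i+1}^*, e_{i,i+2}^*\}$; since the two-sided action fixes the second-superdiagonal part of such a $\mu$ and only shifts the first, its entire two-sided orbit stays of this form. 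Conversely, for any $\mu$ of this form, combining Proposition \ref{heis-cor} with Lemma \ref{heis-lem1} shows that every constituent of $\chi_\mu$ is an irreducible $\xi_\nu$ with $\nu \in G\mu G$; as $\xi_\mu$ is always a constituent of $\chi_\mu$, this already gives that each $\xi_{\lambda_P}$ is an irreducible Heisenberg character, so the map is well defined. Finally, because these characters are irreducible, formula (\ref{xi-formula}) together with the linear independence of the $\{\theta_\nu\}$ shows that $\xi_\mu = \xi_{\mu'}$ if and only if $\mu$ and $\mu'$ share a coadjoint orbit, so the theorem reduces to showing that the $\lambda_P$ form a transversal of the coadjoint orbits of such functionals.

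The key structural step is a run decomposition. I would split the second-superdiagonal part of $\mu$ into its maximal runs $\sum_{i=j}^{k} t_i e_{i,i+2}^*$ of consecutive nonzero entries, leaving the first-superdiagonal entries as the remaining data. The crucial claim is a localisation principle: because the bilinear form $(X,Y)\mapsto \mu(XY)$ pairs only entries lying in overlapping diagonal ranges, the subspaces $\fk l_\mu^1$, $\fk s_\mu^1$, $\olfkl_\mu$, $\olfks_\mu$ and the coadjoint orbit all factor as products indexed by the runs, so that the single-run computations of Lemmas \ref{tech-lem1} and \ref{tech-lem} control the global picture. Granting this, Lemma \ref{tech-lem} provides the dichotomy that drives everything: a run with $k-j$ even yields one constituent and absorbs every first-superdiagonal slot within its span into the coadjoint orbit, whereas a run with $k-j$ odd yields exactly $q$ constituents, distinguished by a coadjoint invariant $\kappa$ whose value on the canonical representative is the coefficient of the leading entry $e_{j,j+1}^*$ (Lemma \ref{tech-lem1}(1)--(2)), while its interior first-superdiagonal slots are again absorbed. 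First-superdiagonal entries disjoint from every run span are coadjoint invariants in their own right.

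It then remains to check that $P\mapsto\lambda_P$ performs exactly this bookkeeping. Reading the path along the antidiagonal coordinate, an $\r$ leaves a first-superdiagonal slot empty, a $\u_t$ records an isolated first-superdiagonal entry, a $\d_t$ places a single second-superdiagonal entry, and a block of $m$ consecutive $\uu$ steps produces $2m$ consecutive second-superdiagonal entries; one verifies directly that a $\d$ followed by $m$ copies of $\uu$ builds an odd-length run of length $2m+1$, that $m$ copies of $\uu$ build an even-length run of length $2m$, and that no $\u$ step ever lands on an absorbed interior slot. For an even-length run the invariant $\kappa$ is recorded by whether the block of $\uu$ steps is immediately preceded by a $\u$ step on $e_{j,j+1}^*$: the bare block realises $\kappa=0$, and the $q-1$ admissible labels on a preceding $\u$ realise the $q-1$ nonzero values of $\kappa$, matching the count $q$ of Lemma \ref{tech-lem}(2); the impossible boundary case $j=0$ is precisely why paths beginning with $\uu$ are barred from $\sLL(n,\FF_q)$. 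Gauging the absorbed slots to zero via the coadjoint action turns each $\mu$ into exactly one $\lambda_P$, which gives surjectivity, while injectivity follows since distinct paths then yield functionals in distinct coadjoint orbits, using Lemma \ref{tech-lem1}(2) for the even-run invariants and direct comparison of supports and labels elsewhere. The final count is immediate from $|\sLL(n,\FF_q)| = \He{n}{q-1}$.

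The step I expect to be the main obstacle is the localisation principle of the second paragraph: proving rigorously that well-separated runs do not interact, so that $\olfkl_\mu=\olfks_\mu$ and the coadjoint orbit genuinely factor over the runs and reduce to the isolated computations of Lemmas \ref{tech-lem1} and \ref{tech-lem}. This amounts to analysing the block structure of the form $\mu(XY)$ for a multi-run $\mu$, a several-run elaboration of the single-run calculations carried out in those lemmas. A secondary subtlety is the even-run bookkeeping: one must verify that the first-superdiagonal directions declared absorbed by Lemma \ref{tech-lem1}(1) are exactly the slots the path never occupies, and that after gauging them away the surviving coefficient on $e_{j,j+1}^*$ is the coadjoint invariant $\kappa$ recorded by the optional leading $\u$ step.
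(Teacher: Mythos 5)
Your proposal is correct in outline, and much of its skeleton coincides with the paper's: the same preliminary reductions (Proposition \ref{heis-cor}, Lemma \ref{heis-obs}, Lemma \ref{heis-lem1}, and Eq.~(\ref{xi-formula}) with linear independence of the $\theta_\nu$ to convert equality of characters into equality of coadjoint orbits), the same single-run inputs (Lemmas \ref{tech-lem1} and \ref{tech-lem}), and the same path dictionary, including the role of the optional leading $\u$ step as the invariant $\kappa$ of an even run and the boundary explanation for barring initial $\uu$ steps. Where you genuinely diverge is the multi-run step, which you correctly identify as the crux. The paper never proves an orbit-level localisation principle: it factors at the level of characters instead, writing $\lambda=\lambda_1+\cdots+\lambda_\ell$ block by block, quoting the multiplicativity $\chi_\lambda=\prod_i\chi_{\lambda_i}$ and $\langle\chi_\lambda,\chi_\lambda\rangle=\prod_i\langle\chi_{\lambda_i},\chi_{\lambda_i}\rangle$ from \cite[Section 2.3]{Thiem}, and then citing \cite[Lemma 2.1]{M_C}, which says that under exactly these hypotheses the product map $\prod_i\Irr(\chi_{\lambda_i})\to\Irr(\chi_\lambda)$ is a bijection; Eq.~(\ref{xi-formula}) and linear independence then identify $\prod_i\xi_{\mu_i}$ with $\xi_{\mu_1+\cdots+\mu_\ell}$. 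That citation buys the factorization for free but imports a nontrivial external result. Your alternative is viable and in fact easier than you fear: since $\ker\nu\supset\fkn^3$, the coadjoint action is \emph{exactly} (not merely to first order) affine-linear in $Y=g-1$, namely $g\nu g^{-1}$ agrees with $\nu$ on $\fkn^2$ while $(g\nu g^{-1}-\nu)(e_{i,i+1})=Y_{i+1,i+2}\,\nu(e_{i,i+2})-Y_{i-1,i}\,\nu(e_{i-1,i+1})$; a variable $Y_{j,j+1}$ thus enters only through runs containing row $j-1$ or row $j$, and since those rows are adjacent no variable couples two distinct runs, so the orbit is $\nu$ plus a direct sum of per-run subspaces supported on the disjoint run spans, and your gauging argument then lands precisely on the paper's canonical set $\cX$ of functionals with blocks of types (a), (b), (c). (The paper reaches $\cX$ by an explicit inverse algorithm from block decompositions to paths rather than by gauge-fixing; that difference is cosmetic.) One point to tighten: your appeal to ``Proposition \ref{heis-cor} combined with Lemma \ref{heis-lem1}'' for irreducibility of every $\xi_\nu$ with $\ker\nu\supset\fkn^3$ uses facts from \emph{inside} the proof of Proposition \ref{heis-cor} (every two-sided orbit of such a $\nu$ contains a quasi-monomial functional, and $\oll_\nu$, $\ols_\nu$ depend only on the two-sided orbit), not its bare statement; this is recoverable but should be said explicitly.
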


\begin{remark} The sequence $\left\{ \He{n}{1} \right\}_{n=1}^\infty=(1,2,5,14,38,104, 284,\dots)$ counting the Heisenberg characters of $\UT_n(\FF_2)$ also counts the number of compositions of $n-1$ whose odd parts are labeled in one of  two different ways \cite[A052945]{OEIS}.
\end{remark} 

%\begin{corollary}
%The number of Heisenberg characters of $\UT_{n+1}(\FF_q)$ is a polynomial in $q-1$ with nonnegative integer coefficients and degree $n$, whose leading coefficient is the $n$th Fibonacci number for $n\geq 1$.
%\end{corollary}
 
 Before proceeding, we must introduce a few technical definitions.  First, define the \emph{upper form} of a matrix $X \in \fkt_n(\FF_q)$ to be the $(n-1)$-by-$(n-1)$ matrix given by deleting the first column and last row of $X$.  The upper form uniquely determines $X \in \fkt_n(\FF_q)$ because the row and column deleted contain all zeros.   Next, define the \emph{matrix} of a functional $\lambda \in \fkt_n(\FF_q)^*$ to be the  matrix $X = \sum_{1\leq i<j\leq n} \lambda(e_{ij}) e_{ij}$; this is the unique matrix $X \in \fkt_n(\FF_q)$ for which $\lambda(Y) = \tr(X^T Y)$.% for all $Y \in \fkt_n(\FF_q)$.  
 
 We now define the \emph{block decomposition} of $\lambda \in \fkt_n(\FF_q)^*$ as the maximal sequence of square matrices $(B_1,\dots,B_\ell)$ such that the upper form of the matrix of $\lambda$ is $\diag(B_1,\dots,B_\ell)$.
 As an example, the block decomposition of $\lambda=0$ is $(B_1,\dots,B_{n-1})$ where each $B_i$ is the 1-by-1 zero matrix, and 
if
 $r,s,t,u,v \in \FF_q^\times$ then the block decomposition of 
\be\label{lambda-ex}\lambda = r e_{1,3}^* + s e_{4,5}^* + t e_{4,6}^* + u e_{5,7}^* + v e_{7,8}^*  = \(\barr{cccccccc}
0& 0& r \\
&0 &0 \\
& & 0 & 0\\
& & & 0 & s & t & 0 \\
& & & & 0 &0 & u \\
& & & & & 0 & 0 \\
& & & & & & 0 & v \\
& & & & & & & 0 
\earr\) \in \fkt_8(\FF_q)^*\ee is $(B_1,B_2,B_3,B_4)$ where 
\[ B_1 = \(\barr{cc} 0 & r\\ 0 & 0 \earr\),\qquad B_2 = \(\barr{c} 0 \earr\),\qquad B_3 = \(\barr{ccc} s & t & 0 \\ 0 & 0 & u \\ 0 & 0 & 0 \earr\),\qquad B_4 = \( \barr{c} v \earr\).\]
%The main idea of the proof of Theorem \ref{heis-thm} is that every Heisenberg character of $\UT_n(\FF_q)$ is indexed by a functional $\lambda \in \fkt_n(\FF_q)^*$ whose block decomposition composition contains only matrices of the preceding four types.  The reverse bijection from Heisenberg characters to lattice paths is obtained by replacing  blocks $B_1$ with the step $\d_r$,  $B_2$ with the step $\r$, $B_3$ with the pair of steps   $(\u_s, \uu_{t,u})$ or $(\r,\uu_{t,u})$ if $s=0$, and $B_4$ with the step $\u_v$.

\begin{proof}[Proof of Theorem \ref{heis-thm}]
Let $\cX$  be the set of functionals $\lambda \in \fkt_n(\FF_q)^*$ such that each matrix in the block decomposition of $\lambda$ is of one of the following types:
\begin{enumerate}
\item[(a)] $X$ is 1-by-1.

\item[(b)] $X$ is $m$-by-$m$ for some $m>1$ and $X_{ij} \neq 0$ if and only if $j=i+1$.

\item[(c)] $X$ is $m$-by-$m$ for some  $m>1$ with $n$ odd and $X_{ij} \neq 0$ if and only if $j=i+1$ or $i=j=1$.  
\end{enumerate}
As the first step of our proof, we show that $P \mapsto \lambda_P$ defines a bijection $\sLL(n,\FF_q) \to \cX$.  It is tedious but not difficult to check that $\lambda_P \in \cX$ for all $P \in \sLL(n,\FF_q)$.  We proceed by constructing an inverse map $\cX \to \sLL(n,\FF_q)$.  To this end, associate to each $\lambda \in \cX$ the $\FF_q$-labeled lattice path $P_\lambda$ defined by the following algorithm:

\begin{enumerate}
\item[(1)] Suppose $\lambda \in \cX$ has block decomposition $(B_1,B_2,\dots,B_\ell)$.

\item[(2)] Begin with $P_\lambda$ equal to the empty sequence, and for $B=B_1,B_2,\dots,B_\ell$ do the following:
\begin{enumerate}
\item[$\bullet$] If $B$ is of type (a), append to $P_\lambda$ the step $\begin{cases} \r,&\text{if $B=0$}; \\ \u_B,&\text{if $B\neq 0$}.\end{cases}$

\item[$\bullet$] If $B$ 
%\[B = \(\barr{cccc} 0 & x_1  \\ & 0 & \ddots \\ & & \ddots & x_{n-1} \\ & & & 0 \earr\)\] 
is $m\times m$ and of type (b), with $t_i = B_{i,i+1}$, append to $P_\lambda$ the sequence of steps 
\[\begin{cases} \(\r,\ \uu_{t_1,t_2},\ \uu_{t_3,t_4},\ \dots,\ \uu_{t_{m-2},t_{m-1}}\),&\text{if $m$ is odd}; \\
\(\d_{t_1},\ \uu_{t_2,t_3},\ \uu_{t_4,t_5},\ \dots,\ \uu_{t_{m-2},t_{m-1}}\),&\text{if $m$ is even}.\end{cases}\]  In particular, if $m=2$ then append the single step $\d_{t_{1}}$.
 
 \item[$\bullet$] If $B$ 
%\[B = \(\barr{cccc} y & x_1  \\ & 0 & \ddots \\ & & \ddots & x_{n-1} \\ & & & 0 \earr\)\] 
is $m\times m$ and of type (c), with $t_i = B_{i,i+1}$ and $u = B_{1,1}$, append to $P_\lambda$ the sequence of steps $ \(\u_u,\ \uu_{t_1,t_2},\ \uu_{t_3,t_4},\ \dots,\ \uu_{t_{m-2},t_{m-1}}\)$.
 \end{enumerate}
\end{enumerate}
For example, with $n=8$, the functional  $\lambda = r e_{1,3}^* + s e_{4,5}^* + t e_{4,6}^* + u e_{5,7}^* + v e_{7,8}^* \in \cX$ from (\ref{lambda-ex}) corresponds to 
\[(\d_r,\r,\u_s,\uu_{t,u},\u_v) \in \sLL(n,\FF_q),\] assuming $r,s,t,u,v \in \FF_q^\times$.
If at the beginning of an iteration of (2), the current endpoint of the path $P_\lambda$ lies on the line $\{x +y = k\}$, then at the end of the iteration, $P_\lambda$ will have advanced to a point on the line $\{x +y =k+m\}$ where $m$ is the size of the current block $B$.  As $P_\lambda$ begins at the origin and as the sum of the block sizes  is $n-1$,  the output path $P_\lambda$ indeed lies in $\sL(n,\FF_q)$.  Since by construction the first step in $P_\lambda$ is not $\uu$, in fact we have $P_\lambda \in \sLL(n,\FF_q)$.

Thus $\lambda \mapsto P_\lambda$ gives a well-defined map $\cX \to \sLL(n,\FF_q)$.  Checking that this map is the inverse of $P\mapsto \lambda_P$ is straightforward from the definitions.
To complete the proof of the theorem, we now must show that the characters $ \xi_\lambda$ for $\lambda \in \cX$ are the distinct Heisenberg characters of $\UT_n(\FF_q)$.
By (\ref{bell-thm}) and Lemma \ref{heis-obs}, an irreducible character of $\UT_n(\FF_q)$ is Heisenberg if and only if it appears as a constituent of a supercharacter $\chi_\Lambda$ indexed by a labeled set partition $\Lambda \in \sP(n,\FF_q)$ whose arcs are of the form $(i,i+1)$ or $(i,i+2)$.  
The set of such $\Lambda$ corresponds via (\ref{ide}) to precisely the set of functionals $\cY\subset \cX$ consisting of $\lambda \in \fkt_n(\FF_q)^*$ with the following property: the upper form of the matrix of $\lambda$ is block diagonal with blocks of types (a) or (b).

It suffices to show that if $\lambda \in \cY$ has block decomposition $(B_1,\dots,B_\ell)$,
then 
the distinct irreducible constituents of $\chi_\lambda$ are the characters $\xi_\mu$, where $\mu$ ranges over all functionals in $\cX$  whose block decompositions are obtained from the block decomposition of $\lambda$ by the following operation: place an arbitrary (possibly zero) element of $\FF_q$ in the upper left corner of each block $B_i$ of type (b) with odd dimension (yielding a block of type (c) if the element is nonzero).

If $\lambda=0$ then $\chi_\lambda=\xi_\lambda$ is irreducible and this statement holds trivially. Suppose the block decomposition of $\lambda$ has a single nonzero block.   If the nonzero block has type (a) then $\chi_\lambda = \xi_\lambda$ is irreducible and the desired statement again holds trivially, and if the block has type (b) then the desired statement 
is simply
Lemma \ref{tech-lem}.  

To treat the general case, write $\lambda = \lambda_1 + \lambda_2+\dots + \lambda_\ell$ where $\lambda_i \in \cY$ in the unique functional with block decomposition 
\be\label{decomp}(\underbrace{0,0,0,0,0\dots,0}_{m_1+\dots+m_{i-1}\text{ zeros}},\ B_i,\ \underbrace{0,0,0,0,0,\dots,0}_{m_{i+1}+\dots+m_\ell\text{ zeros}}),\qquad\text{where block $B_j$ is $m_j\times m_j$}.\ee  Write $\Irr(\chi)$ for the set of irreducible constituents of a character $\chi$ of $\UT_n(\FF_q)$.  It is enough to show two things: that  if we have $\mu_i \in \fkt_n(\FF_q)^*$ such that $\xi_{\mu_i} \in \Irr(\chi_{\lambda_i})$ for each $i \in [\ell]$, then $\xi_{\mu} \in \Irr(\chi_\lambda)$ for $\mu = \mu_1+\dots+\mu_\ell$; and that every irreducible constituent of $\chi_\lambda$ has this form.
To this end,
we note that well-known properties of the supercharacters of $\UT_n(\FF_q)$ (see \cite[Section 2.3]{Thiem}) imply that $\chi_\lambda = \prod_{i=1}^\ell \chi_{\lambda_i}$ and $\langle \chi_\lambda ,\chi_\lambda \rangle = \prod_{i=1}^\ell \langle \chi_{\lambda_i}, \chi_{\lambda_i} \rangle$.  The result \cite[Lemma 2.1]{M_C}  asserts that under these hypotheses the product map 
\be\label{prod-map} \barr{ccc} \prod_{i=1}^\ell \Irr(\chi_{\lambda_i}) & \to&  \Irr(\chi_{\lambda})\\
(\psi_1,\dots,\psi_\ell) & \mapsto & \prod_{i=1}^\ell \psi_i
\earr
\ee is a bijection.  Since the elements of $\Irr(\chi_{\lambda_i})$ and $\Irr(\chi_{\lambda})$ are all characters of the form $\xi_\mu$ by Proposition \ref{heis-cor},  if $\mu_i \in \fkt_n(\FF_q)^*$ such that $\xi_{\mu_i} \in \Irr(\chi_{\lambda_i})$ then $\prod_{i=1}^\ell \xi_{\mu_i} = \xi_{\mu}$ for some $\mu \in \fkt_n(\FF_q)^*$.  In light of the formula (\ref{xi-formula}) and the linear independence of the functions $\theta_\nu$ for $\nu \in \fkt_n(\FF_q)^*$, we deduce that we can take $\mu = \mu_1 + \dots +\mu_\ell$, which completes our argument.
\end{proof}

The degree of the Heisenberg character $\xi_P$ has a  simple formula in terms of $P \in \sLL(n,\FF_q)$.
\begin{corollary}\label{deg-cor} Let $n$ be any positive integer and $q>1$ any prime power.
\begin{enumerate}
\item[(1)] The Heisenberg character $\xi_P$ indexed by $P \in \sLL(n,\FF_q)$ has degree $\xi_P(1) = q^{k+\ell}$, where $k,\ell$ are the numbers of times the steps $\d, \uu$ respectively occur  in $P$.
%$ \begin{cases} k =\text{the number of times the step $\d$ occurs in $P$,} \\\ell =\text{the number of times the step $\uu$ occurs in $P$.}\end{cases}$

\item[(2)] For integers $n\geq 2$ and $e\geq 0$,  the number of Heisenberg characters of $\UT_n(\FF_q)$ with degree $q^e$  is 
\[  q^{n-e-2}\(\binom{n-e-1}{e}(q-1)^e + \binom{n-e-2}{e}(q-1)^{e+1}\).\]
\end{enumerate}
\end{corollary}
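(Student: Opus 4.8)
The plan is to deduce both parts from the bijection of Theorem~\ref{heis-thm} together with the product decomposition established in its proof. For part~(1), recall that if $\lambda_P \in \cX$ has block decomposition with diagonal blocks, then writing $\nu_1,\dots,\nu_m$ for the restrictions of $\lambda_P$ to these blocks, the product map \eqref{prod-map} gives $\xi_P = \xi_{\lambda_P} = \prod_i \xi_{\nu_i}$, so $\xi_P(1) = \prod_i \xi_{\nu_i}(1)$ and it suffices to compute the degree contributed by each block. First I would record the degree of each block type. A block of type~(a) contributes a linear character (degree $q^0=1$): a functional $t e_{i,i+1}^*$ has $\ker$ containing $\fkt_n(\FF_q)^2$, whence $\oll=\ols=G$. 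A block of type~(b) that is $m\times m$ contributes degree $q^{m/2}$ when $m$ is even and $q^{(m-1)/2}$ when $m$ is odd; these exponents are read off from the descriptions of $\olfkl_\lambda$ in the proof of Lemma~\ref{tech-lem} by counting the forced-zero superdiagonal coordinates. A block of type~(c) contributes the same degree $q^{(m-1)/2}$ as the corresponding odd type~(b) block, because by Lemma~\ref{tech-lem}(2) and Lemma~\ref{heis-lem1} all $q$ irreducible constituents of the governing supercharacter share a common degree. Comparing these exponents with the number of $\d$ and $\uu$ steps that the algorithm of Theorem~\ref{heis-thm} appends for each block — namely $1+(m-2)/2 = m/2$ for type~(b) even, $(m-1)/2$ for type~(b) odd and for type~(c), and $0$ for type~(a) — shows that each block contributes a factor $q^{(\#\d)+(\#\uu)}$, so multiplying over blocks yields $\xi_P(1)=q^{k+\ell}$.

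For part~(2), part~(1) and Theorem~\ref{heis-thm} reduce the count to the number of paths in $\sLL(n,\FF_q)$ with exactly $e$ steps among $\{\d,\uu\}$. I would obtain this by refining Proposition~\ref{preHe-formulas}(2): there the summation index is precisely the number of steps in $R=\{\d,\uu\}$, so its $e$th term shows that the number of labeled paths in $\sL(N,\FF_q)$ with exactly $e$ such steps is $p_{N,e} := \binom{N-1-e}{e}(q-1)^e\,q^{\,N-1-e}$. The identity $\He{n}{x}=\preHe{n}{x}-x^2\preHe{n-2}{x}$ reflects that $\sLL(n,\FF_q)$ is obtained from $\sL(n,\FF_q)$ by discarding the paths beginning with $\uu$; refining by the degree statistic, a path beginning with $\uu$ uses one $R$-step and two labels and leaves an arbitrary path of $\sL(n-2,\FF_q)$, so the number of degree-$q^e$ paths beginning with $\uu$ is $(q-1)^2\,p_{n-2,e-1}$. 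Hence the desired count is
\[
p_{n,e}-(q-1)^2 p_{n-2,e-1}
= \binom{n-1-e}{e}(q-1)^e q^{\,n-1-e} - \binom{n-e-2}{e-1}(q-1)^{e+1}q^{\,n-e-2}.
\]
Finally I would simplify to the stated form by factoring out $q^{\,n-e-2}$, expanding $q=1+(q-1)$ in the first term, and applying Pascal's identity $\binom{n-e-1}{e}=\binom{n-e-2}{e}+\binom{n-e-2}{e-1}$, which collapses the expression to $q^{\,n-e-2}\bigl(\binom{n-e-1}{e}(q-1)^e+\binom{n-e-2}{e}(q-1)^{e+1}\bigr)$; the usual binomial conventions handle the boundary case $e=0$.

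The main obstacle is part~(1): the degree formula rests on the multiplicativity of $\xi_P(1)$ across the block decomposition and on the nonobvious fact that upgrading a type~(b) block of odd size to a type~(c) block (that is, inserting the corner entry labeling the extra $\u$ step) leaves the degree unchanged. The latter is exactly what Lemma~\ref{heis-lem1} guarantees, since all irreducible constituents of a single supercharacter $\chi_\lambda$ with $\oll_\lambda=\ols_\lambda$ have equal degree. Once this is in hand, the bookkeeping matching codimensions to $\d$- and $\uu$-step counts is routine, and part~(2) becomes a mechanical refinement of the lattice-path enumeration already performed in Proposition~\ref{preHe-formulas}.
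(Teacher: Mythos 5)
Your proposal is correct and takes essentially the same approach as the paper: part (1) reduces through the block decomposition and the product map (\ref{prod-map}) to single-block functionals whose degrees come from Lemmas \ref{tech-lem1} and \ref{tech-lem} (you read off the index of $\oll_\lambda$ and invoke Lemma \ref{heis-lem1} for type (c) blocks, where the paper equivalently reads off coadjoint orbit sizes via (\ref{xi-formula})). Part (2) arrives at exactly the paper's count --- $\binom{n-e-1}{e}(q-1)^e q^{n-e-1}$ labeled paths with $e$ steps in $\{\d,\uu\}$, minus $\binom{n-e-2}{e-1}(q-1)^{e+1}q^{n-e-2}$ beginning with $\uu$, simplified by Pascal's identity.
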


\begin{remark}
This result with Corollary \ref{fib} proves Theorem \ref{thm2} in the introduction. 
The corollary also shows that  the number of Heisenberg characters of $\UT_n(\FF_q)$ with degree $q^e$ is a polynomial in $q-1$ with nonnegative integer coefficients.  
The same statement for the irreducible characters of $\UT_n(\FF_q)$ is a conjecture usually attributed to Lehrer \cite{Lehrer}; it has been verified in the cases when either $n\leq 13$ \cite{E} or $e \leq 8$ \cite{M_C}.  The number of Heisenberg characters of a pattern group can fail to be a polynomial in the size of the ambient field; see \cite[Theorem 4.9]{Hthesis}.
\end{remark}

\begin{proof}
Fix $P \in \sLL(n,\FF_q)$ and let $\lambda = \lambda_P \in \fkt_n(\FF_q)^*$.   Suppose $(B_1,\dots,B_s)$ is the block decomposition of $\lambda$, where block $i$ has size $m_i$.  Write $\lambda = \lambda_1+\dots+\lambda_s$ such that $\lambda_i \in \fkt_n(\FF_q)^*$ has block decomposition $(0,\dots,0,B_i,0,\dots,0)$ as in (\ref{decomp}), and let $P_i \in \sLL(n,\FF_q)$ be the unique path with $\lambda_i = \lambda_{P_i}$.  

For each $i$, the first $m_1+\dots + m_{i-1}$ steps and the last $m_{i+1}+\dots +m_s$ steps of the path $P_i$ are all the horizontal step $(1,0)$, and one can recover $P$ by removing these steps from each $P_i$ and concatenating the shortened paths which remain.  It follows that the parameters $k,\ell$ defined for $P$ are  simply the  sums of the respective parameters for the paths $P_1,\dots,P_s$.    Since we observed that $\xi_P = \prod_{i=1}^s \xi_{P_i}$ in the proof of Theorem \ref{heis-thm}, it suffices to prove part (1)  in the case when $\lambda$ has at most one nonzero block.  Therefore, without loss of generality assume $\lambda = \lambda_i$ for some $i$.

Recall the matrix types (a), (b),  (c) and the map $\lambda \mapsto P_\lambda$ 
defined in the proof of Theorem \ref{heis-thm}. If the unique nontrivial block $B_i$ is of type (a), then $\lambda$ is invariant under the two-sided action of $\UT_n(\FF_q)$ and $P$ uses only the steps $\r$ and $\u$, so $k=\ell=0$ and $\xi_P(1) = 1 = q^{k+\ell}$.  Suppose $B_i$ is of type (b).  If $m_i$ is odd then we see from the definition of the map $\lambda \mapsto P_\lambda$ that  $k=0$ and $\ell = \frac{m_i-1}{2}$, and from Lemma \ref{tech-lem1} that  the coadjoint orbit of $\lambda$ has size $q^{m_i-1}$, whence $\xi_P(1) = q^{k+\ell}$ by (\ref{xi-formula}).  If $m_i$ is even then similarly $k=1$ and $\ell = \frac{m_i-2}{2}$ and the coadjoint orbit of $\lambda$ has size $q^{m_i}$ by Lemma \ref{tech-lem}, so again $\xi_P(1) = q^{k+\ell}$.  Finally, if $B_i$ is of type (c) then $k=0$ and $\ell = \frac{m_i-1}{2}$ and by Lemma \ref{tech-lem1} the coadjoint orbit of $\lambda$ has size $q^{m_i-1}$, so $\xi_P(1) = q^{k+\ell}$ and we conclude that this holds for all $P \in \sLL(n,\FF_q)$.

To prove part (2),  observe that there are $\binom{e}{k}(q-1)^{e+k}$ distinct $\FF_q$-labeled paths from the origin to the point $(e-k,e+k)$ which use only the steps $\uu,\d$, and so there 
are  $(q-1)^eq^e$ distinct $\FF_q$-labeled paths from the origin to the line $x+y = 2e$ which use only the steps $\uu,\d$.  As there are $\binom{(n-2e-1)+e}{e}q^{n-2e-1}$ ways to insert $n-2e-1$ labeled steps of the form $\r,\u$ into such a path, we deduce that the number of paths in $\sL(n,\FF_q)$ with exactly $e$ steps of the form $\uu,\d$ is $\binom{n-e-1}{e} (q-1)^e q^{n-e-1}$, and that $\binom{n-e-2}{e-1} (q-1)^{e+1} q^{n-e-2}$ of these begin with $\uu$.  Subtracting the second of these quantities  from the first gives the desired formula after applying the usual recurrence for binomial coefficients.
\end{proof}

If $P \in \sLL(n,\FF_q)$ then the Heisenberg character $\xi_P$ of $\UT_n(\FF_q)$ is a constituent of the supercharacter indexed by $\lambda_P \in \fkt_n(\FF_q)^*$.  The character $\xi_P$ is equal to this supercharacter only if the latter is irreducible, and this occurs exactly what
the left and right $\UT_n(\FF_q)$-orbits of $\lambda_P \in \fkt_n(\FF_q)^*$ have a trivial intersection.  It is straightforward to see from our definition of $\lambda_P$ that this fails   if the 
path $P$  uses the step $\uu\ =(0,2)$.  Thus $\xi_P$ is a Heisenberg supercharacter only if $P \in \sK(n,\FF_q)\subset \sLL(n,\FF_q)$.  This necessary condition is in fact sufficient, since if $P \in \sK(n,\FF_q)$ then $\lambda_P$ is equal to the linear functional defined by 
the unique $\FF_q$-labeled set partition $\Lambda_P$ of $[n]$  satisfying the following condition:
\begin{enumerate}
\item[] $\Lambda_P$ has an arc $(i,j)$ labeled by $t \in \FF_q^\times$ if and only if $P$ has a step labeled by $t$ which travels from the point $(x,y-1)$ to the point $(x',y)$ such that $i=x+y$ and $j=x'+y+1$.
\end{enumerate}
It is routine to check that the correspondence $\sK(n,\FF_q) \to \NC(n,\FF_q)$ given by $P \mapsto \Lambda_P$ is well-defined, and that $\Lambda_P =\lambda_P$ as elements of $\fkt_n(\FF_q)^*$.  Thus, if we write $\chi_P$ for the supercharacter indexed by $\Lambda_P$ for $P \in \sK(n,\FF_q)$, then $\chi_P = \xi_P$ and the following theorem is immediate.

%
%We require a brief argument to show that the map $\sK(n,\FF_q) \to \sP(n,\FF_q)$ afforded by $P \mapsto \Lambda_P$ is well-defined.  For this, fix a path $P \in \sK(n,\FF_q)$.  
%Listing the labeled steps of $P$ in the order that they occur, suppose the $k$th labeled step has initial point 
% $(x_k,y_k-1)$ and end point $(x'_k,y_k)$.  (Observe that since only steps with nonzero slope are labeled, the initial and end points indeed have this form.)
% The arcs of $\Lambda_P$ are then $(i_k,j_k) = (x_k+y_k, x'_k+y_k+1)$.  These positions certainly lie above the diagonal since $x'_k$ is either $x_k$ or $x_k+1$.  To show that at most one arc lies in each row and column, it suffices to check that $i_k < i_{k+1}$ and $j_k < j_{k+1}$, and this is immediate from the inequalities $x_k\leq x'_k \leq x_{k+1}\leq x'_{k+1}$ and $y_k \leq y_{k+1}-1$.
% % \be \label{ineq} x_k\leq x'_k \leq x_{k+1}\leq x'_{k+1}\qquad\text{and}\qquad y_k \leq y_{k+1}-1.\ee  
% This confirms that $\Lambda_P$ a well-defined labeled set partition.  
% 
%The set partition $\Lambda_P$ is in fact always noncrossing, since if $k< \ell$ then $i_k < i_\ell$ and $j_k < j_\ell$ but $i_{\ell} = x_{\ell} + y_{\ell} \geq x'_k + y_k + 1= j_k$, so the arcs $(i_k,j_k)$ and $(i_\ell,j_\ell)$ do not cross.
% %
%We thus have a well-defined map $\sK(n,\FF_q) \to \NC(n,\FF_q)$, so given $P \in \sK(n,\FF_q)$, we may
%write $\chi_P$ for the irreducible supercharacter of $\UT_n(\FF_q)$ indexed by $\Lambda_P \in \NC(n,\FF_q)$.  With this notation, we have the following result.

\begin{theorem} \label{del-thm} %Fix a positive integer $n$ and a prime power $q>1$.
The map  %$P\mapsto \chi_P$ is a bijection $\sK(n,\FF_q) \to  \{ \text{Heisenberg supercharacters of $\UT_n(\FF_q)$} \}$. 
\[\barr{ccc} \sK(n,\FF_q) &\to
&
  \bigl\{ \text{Heisenberg supercharacters of $\UT_n(\FF_q)$}\bigr \}  \\
P & \mapsto &  \chi_{P}
\earr\]
is a bijection, and the number of Heisenberg supercharacters of $\UT_n(\FF_q)$
 is $\Del{n}{q-1}$.
\end{theorem}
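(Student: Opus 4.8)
The plan is to deduce this from the bijection $P \mapsto \xi_P$ of Theorem \ref{heis-thm} by restricting it to the subfamily $\sK(n,\FF_q) \subset \sLL(n,\FF_q)$ of paths avoiding the step $\uu = (0,2)$. A Heisenberg supercharacter is an irreducible supercharacter that is Heisenberg, so it is in particular a Heisenberg character; since Theorem \ref{heis-thm} realizes every Heisenberg character as exactly one $\xi_P$, it suffices to identify the $P \in \sLL(n,\FF_q)$ for which $\xi_P$ is itself a supercharacter. As $\xi_P$ is always a constituent of $\chi_{\lambda_P}$ and is irreducible, it is a supercharacter precisely when $\xi_P = \chi_{\lambda_P}$, that is, when $\chi_{\lambda_P}$ is irreducible; by Section \ref{superchar-defs} this is the condition $\langle \chi_{\lambda_P}, \chi_{\lambda_P}\rangle = |G\lambda_P \cap \lambda_P G| = 1$ with $G = \UT_n(\FF_q)$.

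The crux is the dichotomy that $\chi_{\lambda_P}$ is irreducible if and only if $P$ uses no $\uu$ step. One direction I would argue through the crossing criterion furnished by (\ref{bell-thm}): a supercharacter is irreducible if and only if it is indexed by a noncrossing $\FF_q$-labeled set partition. I would first check (routinely, from the definition of $\lambda_P$) that $\lambda_P$ is the functional of a genuine $\FF_q$-labeled set partition $\Lambda_P$, so that $\chi_{\lambda_P} = \chi_{\Lambda_P}$. A step $\uu_{t,u}$ contributes the two arcs $(i-1,i+1)$ and $(i,i+2)$ to $\Lambda_P$, and these satisfy $i-1 < i < i+1 < i+2$, hence cross; so whenever $P$ uses $\uu$ the partition $\Lambda_P$ is crossing and $\chi_{\lambda_P}$ is reducible, forcing $\xi_P \neq \chi_{\lambda_P}$. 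Equivalently, this reducibility is visible in the block decomposition of $\lambda_P$ used in the proof of Theorem \ref{heis-thm}: a $\uu$ step produces a block of size at least three, whose self-inner-product factor exceeds $1$ by the computations behind Lemmas \ref{tech-lem1} and \ref{tech-lem}.

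Conversely, if $P \in \sK(n,\FF_q)$ its steps $\r$, $\u_t$, $\d_t$ each contribute at most one arc, all of the form $(i,i+1)$ or $(i,i+2)$, and a short case analysis shows these arcs are pairwise noncrossing; hence $\Lambda_P \in \NC(n,\FF_q)$ and $\chi_{\lambda_P} = \chi_{\Lambda_P} = \xi_P$ is an irreducible supercharacter, which is Heisenberg by Lemma \ref{heis-obs}(iii) since $\ker \lambda_P \supset \fkt_n(\FF_q)^3$ as every arc of $\Lambda_P$ has length at most two. Writing $\chi_P = \chi_{\Lambda_P}$, the map $P \mapsto \chi_P$ is thus exactly the restriction of the Theorem \ref{heis-thm} bijection to $\sK(n,\FF_q)$, with image precisely the Heisenberg supercharacters, so it is a bijection. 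The enumeration then follows immediately, since $|\sK(n,\FF_q)| = \Del{n}{q-1}$ by the construction of the polynomial $\Del{n}{x}$ from the Delannoy numbers. I expect the main obstacle to be the combinatorial heart of the dichotomy, namely verifying that $\lambda_P$ is always a well-defined labeled set partition, that a $\uu$ step genuinely produces a crossing (hence reducibility), and that every path in $\sK(n,\FF_q)$ yields a noncrossing partition; the irreducibility bookkeeping is supplied by the earlier lemmas and the counting is formal.
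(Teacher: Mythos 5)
Your overall route is the same as the paper's: reduce to the bijection of Theorem \ref{heis-thm}, observe that $\xi_P$ is a supercharacter precisely when $\chi_{\lambda_P}$ is irreducible (since each irreducible character lies in a unique supercharacter), and use the noncrossing classification (\ref{bell-thm}) to show that every $P \in \sK(n,\FF_q)$ yields an irreducible Heisenberg supercharacter. That sufficiency direction, and the counting step $|\sK(n,\FF_q)| = \Del{n}{q-1}$, are correct as you describe them.

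However, your primary argument for the necessity direction rests on a false claim: it is \emph{not} true that $\lambda_P$ is the functional of an $\FF_q$-labeled set partition for every $P \in \sLL(n,\FF_q)$. A functional of the form (\ref{ide}) has at most one nonzero entry in each row and each column of its matrix, and this fails whenever a step $\u$ is immediately followed by $\uu$. Indeed, the paper's own illustrative example $P = (\r,\d_a,\u_b,\uu_{c,d})$ gives $\lambda_P = a e_{2,4}^* + b e_{4,5}^* + c e_{4,6}^* + d e_{5,7}^*$, which has two nonzero entries in row $4$; in the language of the proof of Theorem \ref{heis-thm}, these are exactly the functionals whose block decomposition contains a block of type (c). For such $P$ there is no partition $\Lambda_P$ to which your crossing argument can be applied directly. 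The conclusion you want is still true, but it needs a different justification: either replace $\lambda_P$ by a set-partition representative of its two-sided orbit (zeroing the corner entry of each type (c) block produces a type (b) functional in the same orbit, whose consecutive distance-two arcs do cross), or rely on the fallback you sketch, which is sound: every $\uu$ step lies in a block of size $m \geq 3$, the computations in the proof of Lemma \ref{tech-lem} give $\langle \chi_{\lambda_i},\chi_{\lambda_i}\rangle = q^{m-2} > 1$ for the corresponding single-block functional, and the multiplicativity of $\langle\chi_\lambda,\chi_\lambda\rangle$ over blocks (cited from Thiem in the proof of Theorem \ref{heis-thm}) then forces $\chi_{\lambda_P}$ to be reducible. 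For comparison, the paper sidesteps both issues by arguing necessity directly from the failure of the orbit condition $G\lambda_P \cap \lambda_P G = \{\lambda_P\}$ when $P$ uses $\uu$. With your fallback substituted for the crossing claim, your proof goes through.
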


\begin{remark}
Thus, as mentioned in the introduction, the Pell numbers  \cite[A000129]{OEIS} count the Heisenberg supercharacters of $\UT_n(\FF_2)$.
It is also not difficult to show that  the numbers of Heisenberg supercharacters of $\UT_n(\FF_3)$  form the sequence $\{ \Del{n}{2} \}_{n=0}^\infty = (0,1,3,11,39,139,495,\dots)$ \cite[A007482]{OEIS} whose $n$th term is the number of subsets of $[2n]$ in which each odd number has an even neighbor.
\end{remark}

\section{Counting invariant characters}
\label{c-heis-sect}

We now consider the Heisenberg characters of $\UT_n(\FF_q)$ which are invariant under the action by multiplication of the group's linear characters.  By Lemma \ref{heis-obs} there are $q^{n-1}$ linear characters of $\UT_n(\FF_q)$, and one easily confirms that they are precisely the functions 
$\theta_\tau : g \mapsto \theta\circ\tau(g-1)$ defined in Section \ref{superchar-defs} for $\tau \in \FF_q\spanning\{ e_{i,i+1}^* : 1\leq i < n\}$.  

\begin{proposition}\label{invar-prop} A Heisenberg character of $\UT_n(\FF_q)$ is invariant under multiplication by all linear characters  if and only if it is indexed by a labeled path in $\sLL(n,\FF_q)$ which uses only the steps $\d$ and $\uu$.  There are $q^{k-1}(q-1)^k$ such paths if $n=2k+1$ is odd and zero such paths if $n$ is even.
\end{proposition}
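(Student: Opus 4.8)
The plan is to convert invariance into a statement about the coadjoint orbit of $\lambda_P$ and then extract the answer from the degree formula in Corollary~\ref{deg-cor}. Throughout I fix $P\in\sLL(n,\FF_q)$, abbreviate $\lambda=\lambda_P$, and set $W=\FF_q\spanning\{e_{i,i+1}^*:1\le i<n\}$, so that the $q^{n-1}$ linear characters are exactly the $\theta_\tau$ with $\tau\in W$. Since $\xi_P=\xi_\lambda$ is an irreducible Heisenberg character, the Kirillov formula (\ref{xi-formula}) gives $\xi_\lambda=|\lambda^G|^{-1/2}\sum_{\mu\in\lambda^G}\theta_\mu$, and because $\ker\lambda\supset\fkt_n(\FF_q)^3$ the orbit lies in a single coset, $\lambda^G\subseteq\lambda+W$ (this is the calculation opening the proof of Lemma~\ref{tech-lem1}). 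Using $\theta_\tau\theta_\mu=\theta_{\tau+\mu}$ I would compute $\theta_\tau\cdot\xi_\lambda=|\lambda^G|^{-1/2}\sum_{\nu\in\lambda^G+\tau}\theta_\nu$; since the functions $\{\theta_\nu\}$ are linearly independent, $\theta_\tau\cdot\xi_\lambda=\xi_\lambda$ holds precisely when $\lambda^G+\tau=\lambda^G$.

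Next I would read off the characterization. By the previous step, $\xi_P$ is invariant under all linear characters iff $\lambda^G$ is stable under translation by every $\tau\in W$. As $\lambda^G$ is a nonempty subset of the single coset $\lambda+W$, such stability forces $\lambda^G=\lambda+W$, equivalently $|\lambda^G|=q^{n-1}$. On the other hand $|\lambda^G|=\xi_P(1)^2$ by (\ref{xi-formula}), and $\xi_P(1)=q^{k+\ell}$ by Corollary~\ref{deg-cor}(1), where $k,\ell$ count the steps $\d,\uu$ in $P$; writing $r,u$ for the numbers of steps $\r,\u$, the endpoint condition $x+y=n-1$ reads $r+u+2k+2\ell=n-1$. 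Hence $|\lambda^G|=q^{2(k+\ell)}$ equals $q^{n-1}$ iff $r+u=0$, i.e.\ iff $P$ uses only the steps $\d$ and $\uu$, which is the first claim.

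For the enumeration, note that each of the steps $\d=(1,1)$ and $\uu=(0,2)$ raises $x+y$ by exactly $2$. A path built from these can reach the line $x+y=n-1$ only when $n-1$ is even, giving no such paths for $n$ even; when $n=2k+1$ it must consist of exactly $k$ steps. The definition of $\sLL$ forbids a path beginning with $\uu$, so the first step is forced to be $\d$ (with $q-1$ admissible labels), while each of the remaining $k-1$ steps is freely $\d$ (height $1$, hence $q-1$ labels) or $\uu$ (height $2$, hence $(q-1)^2$ labels). Summing over the number $j$ of $\uu$'s among the last $k-1$ steps yields
\[
(q-1)\sum_{j=0}^{k-1}\binom{k-1}{j}(q-1)^{2j}(q-1)^{k-1-j}
=(q-1)^k\sum_{j=0}^{k-1}\binom{k-1}{j}(q-1)^{j}
=q^{\,k-1}(q-1)^k,
\]
as asserted.

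I expect the crux to be the first paragraph: the equivalence $\theta_\tau\cdot\xi_\lambda=\xi_\lambda\iff\lambda^G+\tau=\lambda^G$ depends on the Kirillov presentation together with the containment $\lambda^G\subseteq\lambda+W$, which in turn rests essentially on $\ker\lambda\supset\fkt_n(\FF_q)^3$; these should be stated cleanly rather than left implicit. A minor point to flag is the counting edge case: the formula $q^{k-1}(q-1)^k$ is intended for $k\ge1$ (that is, $n\ge3$), the degenerate empty path for $n=1$ being handled separately.
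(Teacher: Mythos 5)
Your proof is correct and follows essentially the same route as the paper's: both reduce invariance under all linear characters, via the Kirillov formula (\ref{xi-formula}) and the containment $\lambda_P^G\subseteq\lambda_P+\FF_q\spanning\{e_{i,i+1}^*\}$, to the condition that the coadjoint orbit have size $q^{n-1}$, i.e.\ that $\xi_P$ have degree $q^{(n-1)/2}$, and then invoke Corollary \ref{deg-cor}. The only (immaterial) difference is at the end, where the paper reads the count off from Corollary \ref{deg-cor}(2) while you enumerate the $\d,\uu$-paths directly by a binomial sum; both are one-line computations giving $q^{k-1}(q-1)^k$.
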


\begin{remark}
More generally, one can check by some tedious calculations that the size of the linear character orbit of the Heisenberg character indexed by $P \in \sLL(n,\FF_q)$ is $q^k$ where $k$ is the number of steps in $P$ given by $\r$ or $\u$.
\end{remark}

\begin{proof}
The Heisenberg character indexed by $P \in \sLL(n,\FF_q)$ is invariant under multiplication by all linear characters if and only if 
$\lambda_P$ and $\lambda_P + \tau$ belong to the same coadjoint orbit for all $\tau   \in \FF_q\spanning\{ e_{i,i+1}^* : 1\leq i < n\}$.  This follows  because $\xi_P$ is the Kirillov function indexed by $\lambda_P$, and so the formula (\ref{xi-formula}) (with the fact that $\tau$ is invariant under the two-sided action of the group) implies that the product of $\theta_\tau $ and $ \xi_P$ is the Kirillov function indexed by $\lambda_P + \tau$.

Because $\lambda_P$ is a linear combination of functionals of the form $e_{i,i+1}^*$ and $e_{i,i+2}^*$, it follows by inspection (if not from the preceding discussion) that every element of the coadjoint orbit of $\lambda_P$ has the form $\lambda_P+\tau$ for some $\tau   \in \FF_q\spanning\{ e_{i,i+1}^* : 1\leq i < n\}$.  We deduce from this that  
$\xi_P$ is invariant  if and only if the size of the coadjoint orbit of $\lambda_P$ is exactly $q^{n-1}$, or equivalently if $\xi_P$ has degree $q^{\frac{n-1}{2}}$.  The proposition is now an immediate consequence of 
 Corollary \ref{deg-cor}.
\end{proof}

For $n\geq 2$, we  define $\hom : \UT_n(\FF_q)\to \FF_q^+$ as the homomorphism $\hom(g) =\sum_{i=1}^{n-1} g_{i,i+1}$ and let $C$  denote the subgroup of linear characters $\psi$ of $\UT_n(\FF_q)$ which have $\psi(g)=1$ whenever $\hom(g) = 0$. 
One checks that this coincides with the set 
\[C = \left\{ \vartheta \circ \hom : \vartheta \in \Hom\(\FF_q^+, \CC^\times\) \right\}\]
given in the introduction.
Noting our description of the linear characters of the unitriangular group, we also note that
\[ C = \left\{ \theta_{t\gamma} : t \in \FF_q\right\},\qquad\text{where }\gamma = e_{1,2}^* + e_{2,3}^* + \dots + e_{n,n-1}^* \in \fkt_n(\FF_q)^*.\]
As a preliminary matter let us describe the $C$-invariant supercharacters of $\UT_n(\FF_q)$.  From the formula (\ref{superchar-def}),
it is clear that $\chi_\lambda$ is $C$-invariant if and only if the two-sided $\UT_n(\FF_q)$-orbit of $\lambda \in \fkt_n(\FF_q)^*$ contains $\lambda+t\gamma$ for all $t \in \FF_q$.  
Using this fact, the following lemma is essentially a straightforward technical exercise in linear algebra.

\begin{lemma}\label{fe-lem} The supercharacter of $\UT_n(\FF_q)$ indexed by $\Lambda \in 
 \sP(n,\FF_q)$ is $C$-invariant if and only if for each $j \in [n-1]$, there exists either some $(i,j+1) \in \Arc(\Lambda)$ with $i<j$ or some $(j,k+1) \in \Arc(\Lambda)$ with $j<k$.  
 \end{lemma}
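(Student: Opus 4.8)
The plan is to translate the statement entirely into the combinatorics of two-sided orbits and then reduce it to a local computation at each superdiagonal position. As observed just before the lemma, $\chi_\Lambda$ is $C$-invariant if and only if the two-sided orbit $G\lambda G$ of $\lambda = \lambda_\Lambda \in \fkt_n(\FF_q)^*$ contains $\lambda + t\gamma$ for every $t \in \FF_q$, where $\gamma = \sum_{j=1}^{n-1} e_{j,j+1}^*$. Since by (\ref{bell-thm}) every two-sided orbit contains a unique $\FF_q$-labeled set partition as a canonical representative, this is equivalent to demanding that $\lambda + t\gamma$ reduce to $\Lambda$ under the standard row and column reduction for all $t$. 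As $\gamma$ adds the scalar $t$ to each superdiagonal entry of the matrix of $\lambda$ and changes nothing else, the whole question becomes: for which superdiagonal positions $(j,j+1)$ can the added scalar be absorbed by the reduction?

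First I would record the two elementary moves that match the hypothesis exactly. Writing $M$ for the matrix of $\lambda$, a direct computation shows that the left action of the transvection $1 + x\,e_{i,j}$ sends $M_{j,j+1}$ to $M_{j,j+1} - x M_{i,j+1}$ while fixing every other entry whenever row $i$ of $M$ is supported on column $j+1$ alone, which is precisely the case when $(i,j+1) \in \Arc(\Lambda)$ with $i<j$. Symmetrically, the right action of $1 + x\,e_{j+1,k+1}$ sends $M_{j,j+1}$ to $M_{j,j+1} - x M_{j,k+1}$ and fixes all other entries whenever column $k+1$ is supported on row $j$ alone, i.e.\ when $(j,k+1) \in \Arc(\Lambda)$ with $k>j$. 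Thus the condition imposed on $j$ in the lemma is exactly the statement that some single transvection moves the $(j,j+1)$-entry through all of $\FF_q$ while fixing the rest of $M$. The sufficiency direction follows immediately: if every $j \in [n-1]$ satisfies the hypothesis, then the chosen transvections are supported on pairwise disjoint index pairs, hence commute, and their composite realizes $\lambda + t\gamma \in G\lambda G$ for any prescribed $t$, so $\chi_\Lambda$ is $C$-invariant.

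For the converse I would argue contrapositively. Suppose the hypothesis fails at some $j_0$. I claim that the $(j_0,j_0+1)$-entry takes the same value for $\lambda$ and for every functional in $G\lambda G$ that agrees with $\lambda$ off the superdiagonal. Because $\lambda + t\gamma$ is such a functional, this forces $t = 0$, so $\lambda + t\gamma \notin G\lambda G$ for $t \neq 0$ and $\chi_\Lambda$ is not $C$-invariant. The main obstacle is precisely this claim. The naive moves above cannot touch $(j_0,j_0+1)$ once the hypothesis fails, but one must rule out indirect maneuvers: a \emph{crossing} arc $(a,b)$ with $a<j_0<j_0+1<b$ can, via an auxiliary pair of transvections, be coaxed into altering $(j_0,j_0+1)$. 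The point to establish is that any such maneuver necessarily leaves a nonzero entry strictly above the superdiagonal, so it cannot produce an orbit element congruent to $\lambda$ away from the superdiagonal.

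I expect the clean way to certify this is to exhibit a two-sided-orbit invariant pinning the $(j_0,j_0+1)$-entry among functionals that agree with $\lambda$ off the superdiagonal, or, equivalently and perhaps more efficiently, to compute the inner product $\langle \theta_{\lambda+t\gamma}, \chi_\lambda\rangle$ directly from the closed formula for supercharacter values in \cite[Section 2.3]{Thiem} and verify that it vanishes as soon as $t \neq 0$ at a bad position. Confirming this vanishing is the technical heart of the lemma and is the step on which I would spend the most care; the rest is the bookkeeping of row and column reductions sketched above.
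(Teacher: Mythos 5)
Your reduction of $C$-invariance to the condition $\lambda+t\gamma\in G\lambda G$ for all $t\in\FF_q$ is the paper's own starting point (it is stated immediately before the lemma), and your two local moves are computed correctly; but each direction of your argument has a genuine hole. For sufficiency, the claim that the chosen transvections are ``supported on pairwise disjoint index pairs, hence commute'' is false, and cannot be repaired by choosing certificates more cleverly. Take $\Lambda=\{\{1,3\},\{2,4\}\}\in\sP(4,\FF_q)$: position $j=1$ is forced to use the right move $1+x\,e_{2,3}$ (via the arc $(1,3)$), position $j=3$ is forced to use the left move $1+x\,e_{2,3}$ (via the arc $(2,4)$) --- literally the same matrix unit --- and position $j=2$ must use either $1+x\,e_{1,2}$ or $1+x\,e_{3,4}$, each of which shares an index with $e_{2,3}$; moreover $e_{1,2}e_{2,3}=e_{1,3}\neq 0$, so the corresponding group elements do not commute. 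The conclusion survives, but for a reason your proof never supplies: once one move has been applied, a later move acts through a row or column that has acquired a new superdiagonal entry, and one must check that every such interference term lands in a position $(r,c)$ with $c\leq r$ and is therefore erased by the truncation built into the two-sided action on $\fkt_n(\FF_q)^*$. (One clean way: apply the left moves in decreasing order of $j$ and the right moves in increasing order of $j$, and track the truncation.) That verification is exactly the ``straightforward technical exercise in linear algebra'' the paper alludes to, and it is absent from your write-up.

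The necessity direction is the larger gap: you state the key claim --- that when the hypothesis fails at $j_0$ no ``indirect maneuver'' can move the $(j_0,j_0+1)$-entry while restoring everything off the superdiagonal --- and then explicitly defer its proof, calling it the technical heart. So the converse is simply not proved. It can be closed with an honest orbit invariant: for $i<j$, the rank of the block of the matrix of $\mu$ on rows $1,\dots,i$ and columns $j,\dots,n$ is constant on two-sided orbits, because the left action modifies each of the first $i$ rows only by rows above it, the right action modifies each of the columns $j,\dots,n$ only by columns to its right, and the truncation never touches entries of such a block. If the hypothesis fails at $j_0$, then in the block with $i=j_0$ and $j=j_0+1$ the bottom row and leftmost column vanish except possibly at the corner position $(j_0,j_0+1)$, while the only superdiagonal position inside this block is that same corner; hence adding $t$ there changes the rank for a suitable $t\neq 0$ (any $t\neq 0$ if $(j_0,j_0+1)\notin\Arc(\Lambda)$, and $t=-\Lambda_{j_0,j_0+1}$ otherwise), whence $\lambda+t\gamma\notin G\lambda G$ and $\chi_\Lambda$ is not $C$-invariant. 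Note that this invariant pins only the vanishing or nonvanishing of the corner entry, which is all the lemma needs; your stronger ``takes the same value'' claim is more than is required and would demand extra work. Your fallback of computing inner products via Thiem's supercharacter formula is essentially the alternative route mentioned in the paper's remark after the lemma, but you do not carry that computation out either.
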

 
\begin{remark} One could also prove this result by noting that the elements of $C$ are themselves supercharacters, and then applying either the formula \cite[Eq.\ 2.1]{Thiem} or \cite[Corollary 4.7]{Thiem}, which gives partial rules for decomposing  products of supercharacters as sums of supercharacters.
 \end{remark}

The \emph{matrix} of a labeled set partition $\Lambda \in \sP(n,\FF_q)$ is the $n\times n$ matrix $\sum_{(i,j) \in \Arc(\Lambda)} \Lambda_{ij} e_{ij}$; associating a set partition to its matrix gives a bijection from $\sP(n,\FF_q)$ to the set of strictly upper triangular $n\times n$ matrices over $\FF_q$ with at most one nonzero entry in each row and column.  Shifting the matrix of a set partition one column to the right correspond to the injective map 
\[\label{shift}\shift : \sP(n,\FF_q) \to \sP(n+1,\FF_q)\] given by defining $\shift(\Lambda)$ 
to be the $\FF_q$-labeled set partition of $[n+1]$ with arc set  $\{ (i,j+1) : (i,j) \in \Arc(\Lambda)\}$ and labeling map $(i,j+1) \mapsto \Lambda_{ij}$.
 If we ignore all labels, then $\shift$ is the left inverse of the ``reduction algorithm'' presented in \cite{reduction1}.

We say that a set partition is \emph{feasible} if each of its blocks has at least two elements.
We have appropriated this term from another context:
%Under this definition, the empty partition $\varnothing$ is feasible, but there are no feasible partitions of a singleton set.
%Our terminology here is somewhat novel: the partitions which we call ``feasible'' do not seem to have a well-known name in the literature. 
  in \cite{Bernhart}, Bernhart alternately refers  to partitions of $[n]$ whose blocks do not contain any of the sets $\{1,2\}, \{2,3\}, \dots, \{n-1,n\}, \{n,1\}$ as subsets   as ``cyclically spaced'' or ``feasible.''  Such partitions are in bijection with  partitions of $[n]$ whose blocks have at least two elements 
   (see \cite[Section 3.5]{Bernhart}), and so we decide to call  partitions  of the latter type ``feasible'' since these objects seem to lack a well-known name in the literature.
  
Denote the number of feasible partition of $[n]$ with $k$ blocks by $\stirlstirl{n}{k}$.  These numbers are sometimes called the \emph{associated Stirling numbers of the second kind} \cite{Comtet} and are listed as sequence  \cite[A008299]{OEIS}.
%; they satisfy the recurrence  
%\[\stirlstirl{n+1}{ k+1}=(k+1) \stirlstirl{n}{ k+1}+n \stirlstirl{n-1}{ k},\quad\text{for }n,k\geq 0,\text{ with }\stirlstirl{i}{0} = \stirlstirl{0}{i} = \delta_{i}.
%\]  
The number of feasible $\FF_q$-labeled set partition of $[n]$ is then given by $\Fe{n}{q-1}$ where 
\[\Fe{n}{x} \omdef= \sum_{k=0}^n \stirlstirl{n}{k} x^{n-k}.\]
%More information about these polynomials appears in \cite{M_normal,M_new}. 
The integer sequence $\{\Fe{n}{1}\}_{n=0}^\infty=(1, 0, 1, 1, 4, 11, 41, \dots)$ counting the
 feasible  partitions of $[n]$ appears as \cite[A000296]{OEIS}.  
 
Since a feasible set partition $\Lambda\vdash[n]$ has some $(i,j) \in \Arc(\Lambda)$ or some $(j,k) \in \Arc(\Lambda)$ for each $j \in [n-1]$, the following result is clear from Lemma \ref{fe-lem}.

\begin{proposition}\label{fe-thm} The correspondence $\Lambda \mapsto \chi_{\shift(\Lambda)}$ is a bijection 
\[  \{\text{Feasible elements of }\sP(n,\FF_q)\} \to
  \{ \text{$C$-invariant supercharacters of $\UT_{n+1}(\FF_q)$} \}.  \] Hence $\UT_{n+1}(\FF_q)$ has  $\Fe{n}{q-1}$ distinct $C$-invariant supercharacters.
    \end{proposition}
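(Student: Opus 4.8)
The plan is to transport the problem to the combinatorial side using the Andr\'e--Yan classification (\ref{bell-thm}) and then to match two explicit descriptions: the labeled partitions indexing $C$-invariant supercharacters (via Lemma \ref{fe-lem}) and the image under $\shift$ of the feasible labeled partitions. Since (\ref{bell-thm}) makes $\Lambda' \mapsto \chi_{\Lambda'}$ a bijection from $\sP(n+1,\FF_q)$ onto the supercharacters of $\UT_{n+1}(\FF_q)$, and since $\shift$ is injective, it suffices to establish the set equality
\[ \shift\bigl(\{\text{feasible }\Lambda \in \sP(n,\FF_q)\}\bigr) = \{\Lambda' \in \sP(n+1,\FF_q) : \chi_{\Lambda'}\text{ is }C\text{-invariant}\}. \]
Once this is known, the correspondence $\Lambda \mapsto \chi_{\shift(\Lambda)}$ is automatically a bijection between the indicated sets, and the count $\Fe{n}{q-1}$ follows immediately, since $\Fe{n}{q-1}$ is by definition the number of feasible $\FF_q$-labeled partitions of $[n]$.

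I would begin by recording two elementary translations. First, $\Lambda \in \sP(n,\FF_q)$ is feasible precisely when every $j \in [n]$ is covered by an arc, i.e.\ when $j$ is the smaller element of some $(j,k) \in \Arc(\Lambda)$ or the larger element of some $(i,j) \in \Arc(\Lambda)$; this is just the restatement of the condition that each block has at least two elements. Second, the matrix description of $\shift$ shows that $\Arc(\shift(\Lambda)) = \{(i,j+1) : (i,j) \in \Arc(\Lambda)\}$, so the image of $\shift$ is exactly the set of $\Lambda' \in \sP(n+1,\FF_q)$ having no superdiagonal arc $(a,a+1)$. For the forward inclusion, I would take a feasible $\Lambda$ and verify the criterion of Lemma \ref{fe-lem} (applied with $n$ replaced by $n+1$) for each $j \in [n]$: an arc $(i,j) \in \Arc(\Lambda)$ covering $j$ as its larger element becomes $(i,j+1) \in \Arc(\shift(\Lambda))$ with $i<j$, and an arc $(j,k)$ covering $j$ as its smaller element becomes $(j,k+1)$ with $j<k$, so in either case the required arc exists and $\chi_{\shift(\Lambda)}$ is $C$-invariant.

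The reverse inclusion is where the real content lies, and the step I expect to be the main obstacle is showing that $C$-invariance of $\chi_{\Lambda'}$ forces $\Lambda'$ to have no superdiagonal arc, so that $\Lambda'$ actually lies in the image of $\shift$. Here I would invoke the basic fact that in any set partition each vertex is the smaller element of at most one arc and the larger element of at most one arc. If $(a,a+1) \in \Arc(\Lambda')$ then $a \in [n]$, while $a$ is already the smaller element of an arc and $a+1$ is already the larger element of an arc; hence neither of the two alternatives of Lemma \ref{fe-lem} can hold at $j=a$, contradicting $C$-invariance. With superdiagonal arcs ruled out, $\Lambda' = \shift(\Lambda)$ for a unique $\Lambda \in \sP(n,\FF_q)$, and reversing the computation of the forward inclusion shows that the Lemma \ref{fe-lem} criterion at each $j \in [n]$ forces $j$ to be covered by an arc of $\Lambda$, i.e.\ that $\Lambda$ is feasible. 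This completes the set equality, hence the bijection and the enumeration.
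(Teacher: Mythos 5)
Your proof is correct and takes essentially the same approach as the paper: both reduce the statement via the Andr\'e--Yan bijection (\ref{bell-thm}) to matching the $C$-invariance criterion of Lemma \ref{fe-lem} against the image of the map $\shift$. The paper simply asserts the result is clear from that lemma once one notes that feasible partitions satisfy the covering condition, whereas you spell out the details it leaves implicit---in particular the surjectivity step, where $C$-invariance rules out superdiagonal arcs and thereby forces the indexing partition to lie in the image of $\shift$.
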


Recall the definitions of $\sP(n,\FF_q)$ and $\NC(n,\FF_q)$ from the introduction.
The cardinalities of these sets are given respectively by $\Bell{n}{q-1}$ and $\Cat{n}{q-1}$, where
\[\Bell{n}{x} \omdef =% \sum_{\Lambda \in \sP(n)} x^{|\Arc(\Lambda)|} =
\sum_{k=0}^n \stirl{n}{k} x^{n-k}\quad\text{and}\quad 
\Cat{n}{x} \omdef =% \sum_{\Lambda \in \NC(n)} x^{|\Arc(\Lambda)|} =
 \sum_{k=0}^n N(n,k) x^{n-k}.\]  Here $\stirl{n}{k}$ and $N(n,k)$ are the {Stirling numbers of the second kind} and the {Narayana numbers}, defined as the number of ordinary and noncrossing set partitions of $[n]$ with $k$ blocks (equivalently, with $n-k$ arcs). We note the well-known formula $N(n,k) = \frac{1}{n} \binom{n}{k} \binom{n}{k-1}$ for $n> 0$ and adopt the convention $\stirl{0}{k} = N(0,k) = \delta_{k}$. 
 In explanation of our notation, $\Bell{n}{1}$ is the $n$th {Bell number}  and $\Cat{n}{x}$ is  the \emph{Narayana polynomial}, whose values give the Catalan numbers    when $x=1$ and the 
little Schr\"oder numbers 
when $x=2$.

Let $\cC_n = \Cat{n}{1} = \frac{1}{n+1}\binom{2n}{n}$ denote the $n$th Catalan number.  
The polynomials $\Bell{n}{x}$ and $\Cat{n}{x}$  have the noteworthy alternate formulas 
\be\label{alt-eq}\Bell{n+1}{x} =
 \sum_{k=0}^n \binom{n}{k}\Fe{k}{x} (x+1)^{n-k}
 \quad\text{and}\quad
 \Cat{n+1}{x} =\sum_{k=0}^{\lfloor n/2\rfloor} \cC_k \binom{n}{2k} x^k (x+1)^{n-2k}\ee
for all $n\geq 0$ \cite[Theorem 3.2]{M_new}.  The right hand side in particular is a well-known identity due to Coker \cite{Coker_Enum}, which will be of use in the following corollary.

\begin{corollary} \label{c-irr-thm}  Fix a nonnegative integer $n$ and a prime power $q>1$.
\begin{enumerate}
\item[(1)] The number of $C$-invariant irreducible supercharacters of $\UT_{n+1}(\FF_q)$ is 
\[(1-q)^{\lfloor n/2\rfloor} \Cat{n+1}{-1} = \begin{cases} (q-1)^{n/2} \cC_{n/2},&\text{if $n$ is even}; \\ 0,&\text{if $n$ is odd}.\end{cases}\]

\item[(2)] The number of $C$-invariant Heisenberg supercharacters of $\UT_{n+1}(\FF_q)$ is 
\[(1-q)^{\lfloor n/2\rfloor} \Del{n+1}{-1} = \begin{cases} (q-1)^{n/2},&\text{if $n$ is even}; \\ 0,&\text{if $n$ is even}.\end{cases}\]
\end{enumerate}
\end{corollary}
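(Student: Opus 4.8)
The plan is to reduce both parts to a count of labeled noncrossing matchings, using Proposition \ref{fe-thm} together with the classifications of irreducible and Heisenberg supercharacters. Recall that Proposition \ref{fe-thm} identifies the $C$-invariant supercharacters of $\UT_{n+1}(\FF_q)$ with the feasible $\FF_q$-labeled partitions $\Lambda$ of $[n]$ through $\Lambda \mapsto \chi_{\shift(\Lambda)}$. Under the correspondence (\ref{bell-thm}), the supercharacter $\chi_{\shift(\Lambda)}$ is irreducible precisely when $\shift(\Lambda)$ is noncrossing, and by Theorem \ref{del-thm} it is moreover Heisenberg precisely when $\shift(\Lambda)$ is noncrossing and all of its arcs have the form $(i,i+1)$ or $(i,i+2)$. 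Thus part (1) asks for the number of feasible $\Lambda$ with $\shift(\Lambda)$ noncrossing, and part (2) asks for those additionally satisfying the arc-length condition.

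The central step, which I would isolate as a short lemma, characterizes when $\shift(\Lambda)$ is noncrossing. The arcs of $\shift(\Lambda)$ are the pairs $(i,j+1)$ with $(i,j) \in \Arc(\Lambda)$, and distinct arcs of a partition have distinct left endpoints, so $\shift(\Lambda)$ has a crossing exactly when $\Lambda$ contains two arcs $(i,j),(i',j')$ with $i<i'\le j<j'$. If some block of $\Lambda$ had three or more elements, with consecutive members $a<b<c$, then the arcs $(a,b),(b,c)$ would satisfy $a<b\le b<c$ and force a crossing in $\shift(\Lambda)$; since $\Lambda$ is feasible, every block therefore has exactly two elements, so $\Lambda$ is a perfect matching and $n$ is even. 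When every block has size two the four endpoints of any two arcs are distinct, so the condition $i<i'\le j<j'$ collapses to the ordinary crossing condition $i<i'<j<j'$. Hence $\shift(\Lambda)$ is noncrossing if and only if $\Lambda$ is a noncrossing perfect matching of $[n]$. I expect this step to be the main obstacle, since it requires translating the crossing condition correctly through $\shift$ and, in particular, handling the ``touching'' case $i'=j$ that rules out blocks of size exceeding two; everything afterward is bookkeeping.

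For part (1), the noncrossing perfect matchings of $[2m]$ number $\cC_m$, and each of the $m$ arcs carries an independent label in $\FF_q^\times$, so the count of $C$-invariant irreducible supercharacters is $(q-1)^{m}\cC_m$ when $n=2m$ and $0$ when $n$ is odd. To recover the stated closed form I would evaluate Coker's identity $\Cat{n+1}{x} = \sum_{k=0}^{\lfloor n/2\rfloor} \cC_k \binom{n}{2k} x^k (x+1)^{n-2k}$ from (\ref{alt-eq}) at $x=-1$: the factor $(x+1)^{n-2k}$ vanishes unless $n-2k=0$, leaving $\Cat{n+1}{-1}=(-1)^{n/2}\cC_{n/2}$ for $n$ even and $0$ for $n$ odd, so that $(1-q)^{\lfloor n/2\rfloor}\Cat{n+1}{-1} = (q-1)^{n/2}\cC_{n/2}$, matching the count.

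For part (2), I would impose the arc-length condition on the matching produced above: a matched pair $\{i,j\}$ of $\Lambda$ becomes the arc $(i,j+1)$ of $\shift(\Lambda)$, which has length at most two only if $j=i+1$. Thus every pair must be adjacent, leaving the unique matching $\{1,2\},\{3,4\},\dots,\{n-1,n\}$ when $n$ is even and none when $n$ is odd; with its $n/2$ labeled arcs this yields $(q-1)^{n/2}$ objects (or $0$). I would then reconcile this with $(1-q)^{\lfloor n/2\rfloor}\Del{n+1}{-1}$ by evaluating the formula $\Del{n+1}{x} = \sum_{k=0}^{\lfloor n/2\rfloor} \binom{n-k}{k} x^k (1+x)^{n-2k}$ of Proposition \ref{preHe-formulas} at $x=-1$, where again only the $k=n/2$ term survives and gives $\Del{n+1}{-1}=(-1)^{n/2}$, so $(1-q)^{\lfloor n/2\rfloor}\Del{n+1}{-1}=(q-1)^{n/2}$, completing the proof.
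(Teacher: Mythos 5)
Your proposal is correct and follows essentially the same route as the paper: both reduce via Proposition \ref{fe-thm} to counting feasible labeled partitions $\Lambda$ of $[n]$ with $\shift(\Lambda)$ noncrossing (forcing noncrossing perfect matchings, hence the Catalan count, and for part (2) the unique adjacent matching $\{1,2\},\{3,4\},\dots$), then obtain the closed forms by evaluating the identities of (\ref{alt-eq}) and Proposition \ref{preHe-formulas} at $x=-1$. The only difference is cosmetic: where the paper cites \cite{reduction1,M_new} for the fact that $\shift(\Lambda)$ is noncrossing if and only if $\Lambda$ is noncrossing with no blocks of more than two elements, you prove this directly (and correctly, including the ``touching'' case $i'=j$), making the argument self-contained on that point.
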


\begin{proof}
The equality of the expressions in (1) and (2) follows from (\ref{alt-eq}) and Proposition \ref{preHe-formulas}.
As discussed in \cite{reduction1,M_new}, if $\Lambda \in \sP(n-1,\FF_q)$ then $\shift(\Lambda)$ is noncrossing if and only if $\Lambda$ is noncrossing and has no blocks with more than two elements.  It follows from the preceding proposition that the $C$-invariant irreducible supercharacters of $\UT_{n+1}(\FF_q)$ are in bijection with the $\FF_q$-labeled noncrossing partitions of $[n]$ whose blocks all have size two. 
Such partitions clearly exist only if $n$ is even, in which case their number is equal to the given expression because
the number of (unlabeled) noncrossing partitions of $[2k]$ whose blocks all have size two is $\cC_{k}$ by \cite[Exercise 6.19o]{Stanley}.

For the second part, observe by Lemma \ref{heis-obs} that the supercharacter $\chi_{\shift(\Lambda)}$ is Heisenberg 
if
and
only
if
$\shift(\Lambda)$ is noncrossing and has no arcs $(i,j)$ with $j>i+2$.  Consulting the definition of $\shift$, we see that this is equivalent to the condition that  $\Lambda$ be noncrossing with no blocks with more than two elements and no arcs $(i,j)$ with $j>i+1$.  It follows that $\chi_{\shift(\Lambda)}$ is Heisenberg and $C$-invariant  if and only if the unlabeled form of $\Lambda $ is $\{ \{1,2\}, \{3,4\}, \dots, \{2k-1,2k\}\}$ for some $k$.  
Hence there are $(q-1)^k$ distinct $C$-invariant Heisenberg supercharacters of $\UT_{n+1}(\FF_q)$ when $n=2k$ and no such supercharacters if $n$ is odd.
\end{proof}

Let us now return our attention to Heisenberg characters.
We do not know of a combinatorial bijection from the set of $C$-invariant Heisenberg characters of $\UT_{n+1}(\FF_q)$ to the family of labeled lattice paths $\sMM(n,\FF_q)$, but we can prove that these two sets are equinumerous.

\begin{theorem}\label{c-heis-thm}
%Fix a positive integer $n$ and a prime power $q>1$. Then 
The number of $C$-invariant Heisenberg characters of $\UT_{n+1}(\FF_q)$
 is  $\In{n}{q-1}$.
  \end{theorem}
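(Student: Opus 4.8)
The plan is to count the $C$-invariant Heisenberg characters of $\UT_{n+1}(\FF_q)$ by leveraging the classification from Theorem~\ref{heis-thm} together with the criterion supplied by Proposition~\ref{invar-prop} and the degree formula in Corollary~\ref{deg-cor}. By Theorem~\ref{heis-thm}, the Heisenberg characters of $\UT_{n+1}(\FF_q)$ are indexed bijectively by paths $P \in \sLL(n+1,\FF_q)$, and multiplication by a linear character corresponds to translating $\lambda_P$ by an element of $\FF_q\spanning\{ e_{i,i+1}^* \}$. A character $\xi_P$ is invariant under the full group of linear characters precisely when it uses only the steps $\d$ and $\uu$; but we want invariance only under the smaller subgroup $C$, so the first step is to derive the correct analogue of Proposition~\ref{invar-prop} for $C$-invariance. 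As in the proof of that proposition, $\xi_P$ is $C$-invariant if and only if $\lambda_P$ and $\lambda_P + t\gamma$ lie in the same coadjoint orbit for all $t\in\FF_q$, where $\gamma = \sum_{i} e_{i,i+1}^*$; so the task reduces to understanding when translating $\lambda_P$ by a multiple of the ``all-superdiagonal'' functional $\gamma$ preserves the coadjoint orbit.

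The key observation is that coadjoint orbits decompose according to the block decomposition of $\lambda_P$: if $\lambda_P = \lambda_1 + \dots + \lambda_s$ with block sizes $m_1,\dots,m_s$, then $\gamma$ splits correspondingly into pieces $\gamma_1,\dots,\gamma_s$ supported on each block, and the orbit question factors block by block. For each block $B_i$, whether translating by the local piece of $\gamma$ stays in the coadjoint orbit is governed by Lemma~\ref{tech-lem1}(3), which counts exactly the labelings $t$ of a ``zigzag'' block of type (b) with odd $k-j$ for which $\lambda_t + \gamma$ returns to the coadjoint orbit of $\lambda_t$; that count is $\tfrac1q\bigl((q-1)^{2d} - (-1)^d (q-1)^d\bigr)$. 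Blocks of type (a) (the horizontal and single vertical steps) always trivially absorb their piece of $\gamma$, while blocks of even dimension and blocks of type (c) need to be analyzed separately using Lemma~\ref{tech-lem1} to see whether $C$-invariance can hold. After assembling these local conditions, I would count the total number of $C$-invariant $P\in\sLL(n+1,\FF_q)$ as a product of per-block contributions summed over all admissible block-size compositions of $n$.

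The remaining step is to verify that this sum equals $\In{n}{q-1}$. Recalling $\In{n}{x} = x\,\preIn{n-1}{x} + x\,\preIn{n-2}{x}$ and the generating function $\sum_n \preIn{n}{x} z^n = z/(1 - xz - x(x+1)z^3)$, the cleanest route is to show that the generating function of my block-by-block count matches the generating function for $\In{n}{x}$. Each type-(a) block of size $1$ contributes a factor tracked by $z$, and each zigzag block contributes a factor built from the Lemma~\ref{tech-lem1}(3) cardinality $\tfrac1q\bigl((q-1)^{2d} - (-1)^d (q-1)^d\bigr)$ times $z^{2d}$ or $z^{2d+1}$ depending on parity; summing these geometric-type series over $d$ should reproduce the denominator $1 - xz - x(x+1)z^3$ after substituting $x = q-1$, with the extra factor of $x$ and the shift by one in the recurrence for $\In{n}{x}$ accounting for the reindexing between paths in $\sLL(n+1,\FF_q)$ and the Motzkin-type paths $\sM$. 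The main obstacle I anticipate is the careful bookkeeping in this generating-function computation: correctly translating the Lemma~\ref{tech-lem1}(3) formula (which involves dividing by $q$, hence is not obviously a polynomial in $q-1$ on its own) into per-block weights, handling the parity constraints on block sizes, and ensuring that the contributions from even-dimensional and type-(c) blocks are assigned correctly, so that the cancellations conspire to produce exactly the claimed denominator $1 - xz - x(x+1)z^3$ and the factor $x(\preIn{n-1} + \preIn{n-2})$.
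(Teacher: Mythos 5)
Your plan follows essentially the same route as the paper's own proof (Lemma \ref{compos}): reduce to the block decomposition of $\lambda_P$, decide block by block whether the piece of $\gamma$ is absorbed into the coadjoint orbit using Lemmas \ref{tech-lem} and \ref{tech-lem1}(3), and sum a product of per-block weights over compositions of $n$. However, there is a genuine error in your block analysis that would wreck the count: type (a) blocks do \emph{not} ``trivially absorb their piece of $\gamma$'' --- they \emph{never} absorb it. A functional supported on superdiagonal positions $e_{i,i+1}^*$ is fixed by the coadjoint action of $\UT_{n+1}(\FF_q)$, so for a $1\times 1$ block the relevant piece of the coadjoint orbit is a single point, and $\lambda_i + t e_{i,i+1}^*$ lies outside it for every $t\neq 0$. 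Consequently any path containing an $\r$ or $\u$ step that forms its own block indexes a non-$C$-invariant character; such steps can survive only as the initial step of an odd block of size at least $3$ (type (b) odd, or type (c)). This is precisely the fact that forces every part of the composition to be at least $2$ (equivalently, makes the weight of a part equal to $1$ vanish, as in the paper's $f_q$), and it is what produces $\In{1}{x}=0$: for example, $\UT_2(\FF_q)$ has no $C$-invariant Heisenberg characters at all, whereas your assertion would yield $q$ of them. There is also a secondary muddle in the same sentence: the even/odd roles are reversed. Even-dimensional type (b) blocks need no analysis --- by Lemma \ref{tech-lem}(1) their coadjoint orbit contains the entire superdiagonal span, so they always absorb $\gamma_i$ --- while it is exactly the odd-dimensional blocks (type (b) odd and type (c), whose corner entry is irrelevant to absorption) that require the count of Lemma \ref{tech-lem1}(3); including the $q$ free choices of corner label, each odd block of size $m$ contributes the weight $(q-1)^{m-1}-(-1)^{(m-1)/2}(q-1)^{(m-1)/2}$.

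With those corrections, the rest of your plan is viable, and your generating-function finish is in fact cleaner than the paper's. The paper verifies that the composition sum satisfies the recurrence (\ref{hec-rec}); instead, setting $x=q-1$ and taking the corrected per-block weights $w(m) = x^{m-1}-\sin(m\pi/2)\,x^{(m-1)/2}$ (so $w(1)=0$), one gets
\begin{equation*}
\sum_{m\geq 1} w(m)z^m \;=\; \frac{z}{1-xz}-\frac{z}{1+xz^2},
\qquad
\Bigl(1-\sum_{m\geq 1} w(m)z^m\Bigr)^{-1} \;=\; \frac{(1-xz)(1+xz^2)}{1-xz-x(x+1)z^3} \;=\; 1+\sum_{n\geq 1}\In{n}{x}z^n,
\end{equation*}
where the last equality follows from $\In{n}{x}=x\preIn{n-1}{x}+x\preIn{n-2}{x}$ and the stated generating function for $\preIn{n}{x}$. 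So the bookkeeping you worried about does close up --- but only after the type (a) claim is inverted; as written, your weights would include a nonzero contribution from parts of size $1$ and the identity above would fail.
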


To show this, we first derive an explicit formula for the number of Heisenberg characters of $\UT_{n+1}(\FF_q)$.
 In the following lemma, by a composition of a number $n$, we mean a sequence of positive integers whose sum is $n$.  The positive integers are the \emph{parts} of the composition. %, and we consider $\varnothing$ to be the unique composition of zero.

%$\textbf{c} = (c_1,\dots,c_\ell)$ of $n-1$ (i.e., a sequence of a positive integers whose sum is $n-1$), let $\cX(\textbf{c})$ denote the set of elements of $\cX$ whose block decompositions have $\ell$ blocks with the $i$th block given by a $c_i\times c_i$ matrix.  The set $\cX$ is then the disjoint union of the sets $\cX(\textbf{c})$ with $\textbf{c}$ ranging over all compositions of $n-1$.

\begin{lemma}\label{compos}
For each composition $\textbf{c} = (c_1,\dots,c_\ell)$ of a positive integer $n$, define 
\[f_q(\textbf{c}) = \prod_{i=1}^\ell \((q-1)^{c_i-1} - \sin(c_i \pi /2) (q-1)^{(c_i-1)/2}\).
\]%,\qquad\text{with }f_q(\varnothing) = 1.\]
The number of $C$-invariant Heisenberg characters of $\UT_{n+1}(\FF_q)$ is then the sum
$\sum_{\textbf{c}} f_q(\textbf{c})$ over all compositions $\textbf{c}$ of $n$.
\end{lemma}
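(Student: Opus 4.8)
The plan is to reduce the count to a product over the blocks of the block decomposition and then evaluate a single--block count using the coadjoint--orbit computations in Lemmas \ref{tech-lem1} and \ref{tech-lem}. By Theorem \ref{heis-thm}, every Heisenberg character of $\UT_{n+1}(\FF_q)$ equals $\xi_\lambda$ for a unique $\lambda \in \cX$, and writing $\lambda = \lambda_1 + \dots + \lambda_\ell$ for its block decomposition we have $\xi_\lambda = \prod_i \xi_{\lambda_i}$. The block sizes form a composition $\mathbf{c} = (c_1,\dots,c_\ell)$ of $n$, since the upper form is $n\times n$. First I would show that $C$--invariance factors over blocks. Using the proof of Proposition \ref{invar-prop}, multiplication by $\theta_{t\gamma}$ sends the Kirillov function of $\lambda$ to that of $\lambda + t\gamma$; since $\gamma$ restricts to the block superdiagonals as $\gamma = \gamma_1 + \dots + \gamma_\ell$ and the linear constraints cutting out the coadjoint orbit are block--separated, the orbit of $\lambda + t\gamma$ decomposes accordingly. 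Hence $\xi_\lambda$ is $C$--invariant if and only if $\lambda_i + t\gamma_i$ lies in the coadjoint orbit of $\lambda_i$ for every $i$ and every $t \in \FF_q$. Consequently the desired count equals $\sum_{\mathbf{c}} \prod_i g_q(c_i)$, where $g_q(c)$ is the number of $C$--invariant single--block functionals of size $c$ and $\mathbf{c}$ runs over compositions of $n$.

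It then remains to prove $g_q(c) = (q-1)^{c-1} - \sin(c\pi/2)(q-1)^{(c-1)/2}$, which I would do by cases on the block type. A block of size $1$ is a single superdiagonal functional, whose coadjoint orbit is a single point, so $\lambda_i + t\gamma_i \sim \lambda_i$ fails for $t\neq 0$ and $g_q(1) = 0$, matching the formula. For a block of even size $c$ only type (b) occurs; Lemma \ref{tech-lem}(1) shows its coadjoint orbit is the full coset $\lambda + \FF_q\spanning\{ e_{i,i+1}^*\}$ over the $c$ superdiagonal positions of the block, so every shift $t\gamma_i$ stays in the orbit and all $(q-1)^{c-1}$ type--(b) blocks are $C$--invariant, again matching the formula (where $\sin(c\pi/2)=0$).

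The substantive case is odd size $c = 2d+1 \geq 3$. Here I would invoke the explicit orbit description in Lemma \ref{tech-lem1}(1): the orbit of the type--(b) functional $\lambda_t$ is cut out by the single relation $u_0 = \sum_{i=1}^d a_i u_{2i}$ on the superdiagonal shift $\sum_i u_i e_{i+j,i+j+1}^*$, with $a_1,\dots,a_d \in \FF_q^\times$ determined by $t$. Adding a corner value $u$ (passing between types (b) and (c), as in Lemma \ref{tech-lem}(2)) merely translates this relation to $u_0 = u + \sum_{i=1}^d a_i u_{2i}$, so the uniform shift indexed by $\lambda_t + u e_{j,j+1}^* + s\gamma_i$ lies in the orbit precisely when $s\bigl(1 - \sum_{i=1}^d a_i\bigr) = 0$. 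This holds for all $s \in \FF_q$ exactly when $\sum_{i=1}^d a_i = 1$, a condition independent of $u$. Thus all $q$ constituents indexed by $u \in \FF_q$ are $C$--invariant simultaneously, and the number of admissible labelings $t$ is the cardinality in Lemma \ref{tech-lem1}(3), namely $\frac{1}{q}\bigl((q-1)^{2d} - (-1)^d(q-1)^d\bigr)$. Multiplying by the $q$ choices of $u$ gives $g_q(c) = (q-1)^{2d} - (-1)^d(q-1)^d$, which equals the claimed formula since $\sin(c\pi/2) = (-1)^d$ for $c = 2d+1$. Assembling the three cases yields $\prod_i g_q(c_i) = f_q(\mathbf{c})$ and hence the lemma.

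I expect the main obstacle to be the first paragraph: rigorously justifying that both the action of $C$ and coadjoint--orbit membership decompose over the blocks, so that $C$--invariance is equivalent to blockwise invariance and the count becomes multiplicative. The remaining cases reduce to direct applications of the two technical lemmas, the only delicate point being the observation that the invariance condition for an odd block is independent of its type--(c) corner, which is exactly what recovers the factor of $q$ relative to the count in Lemma \ref{tech-lem1}(3).
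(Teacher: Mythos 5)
Your proposal is correct, but it takes a genuinely different route from the paper's proof, which never invokes Theorem \ref{heis-thm} at all. The paper works one level up, with supercharacters: using Lemma \ref{heis-obs} and Lemma \ref{fe-lem} it shows every $C$-invariant Heisenberg character is a constituent of $\chi_\lambda$ for a functional $\lambda$ whose blocks are all of type (b) with size at least two; it then proves an all-or-none dichotomy (since $\mu-\lambda$ is two-sided invariant for $\mu \in G\lambda G$, either all constituents of $\chi_\lambda$ are $C$-invariant or none are), counts the good $\lambda$ of each shape via Lemmas \ref{tech-lem1}(3) and \ref{tech-lem}(1), and multiplies by the constituent count $q^{\#\{i\,:\,m_i\text{ odd}\}}$ coming from Lemma \ref{tech-lem} and the product bijection of \cite[Lemma 2.1]{M_C}. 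You instead enumerate irreducible characters directly through the bijection of Theorem \ref{heis-thm}, so your factor of $q$ per odd block arises as the type-(c) corner parameter rather than as a constituent count, and your observation that the condition $\sum_i a_i = 1$ is independent of the corner $u$ (because $u e_{j,j+1}^*$ is coadjoint-invariant) plays exactly the role of the paper's two-sided-invariance dichotomy. The technical core is shared: the Kirillov identity $\theta_{t\gamma}\xi_\lambda = \xi_{\lambda+t\gamma}$, the per-block counts of Lemmas \ref{tech-lem1} and \ref{tech-lem}, and the blockwise factorization of the orbit condition --- which the paper also dismisses as ``a straightforward computation,'' though in your write-up it would be worth noting that the inclusion $\mathrm{orbit}(\lambda) \subset \sum_i \mathrm{orbit}(\lambda_i)$ plus the cardinality identity $|\mathrm{orbit}(\lambda)| = \prod_i |\mathrm{orbit}(\lambda_i)|$ (from $\xi_\lambda = \prod_i \xi_{\lambda_i}$ and Eq.\ (\ref{xi-formula})) forces equality, since the blocks occupy disjoint superdiagonal positions. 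What your route buys: the sum over \emph{all} compositions appears immediately, since parts equal to $1$ contribute $g_q(1)=0$ automatically, and Lemma \ref{fe-lem} is not needed. What the paper's route buys: its supercharacter-level bookkeeping makes Observation \ref{heis-orb-obs} (every $C$-orbit of a Heisenberg character has size $1$ or $q$) fall out for free, which is needed later in Lemma \ref{alt-lem}.
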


%\begin{remark}
%There is exactly one term is this sum when $n=0$, since the empty composition has no parts.  There are no terms when $n=1$, however, so the sum is zero in that case.
%\end{remark}

\begin{proof}
The sum in the lemma is 0 if $n=1$, which is the number of $C$-invariant Heisenberg characters of $\UT_{2}(\FF_q)$ since in that case $C$ is   equal to the set of all irreducible characters of $\UT_2(\FF_q)\cong \FF_q^+$.

Assume $n\geq 2$ and let $G = \UT_{n+1}(\FF_q)$ and $\fkn = \fkt_{n+1}(\FF_q)$.
Choose $\Lambda \in \sP(n+1,\FF_q)$, and view this labeled set partition as a functional in $\fkn^*$ in the usual way via (\ref{ide}).   By Lemma \ref{heis-obs}, if $\psi$ is a $C$-invariant Heisenberg constituent of $\chi_\Lambda$, then $\chi_\Lambda$ is $C$-invariant and every $(i,j) \in \Arc(\Lambda)$ has $j<i+3$ since $\ker \Lambda \supset \fkn^3$.  %On the other hand, by Theorem \ref{fe-thm}, the supercharacter $\chi_\Lambda$ is $C$-invariant if and only if for each $j \in [n]$, $\Lambda$ has an arc of the form $(i,j+1)$ with $i<j$ or of the form $(j,k+1)$ with $j<k$.  
By Lemma \ref{fe-lem}, it follows that every $C$-invariant Heisenberg character of $G$ is a constituent of a supercharacter $\chi_\Lambda$ for some 
$\Lambda \in \sP(n+1,\FF_q)$ which has 
$\Arc(\Lambda) \subset \{ (i,i+2) : i \in [n-1]\}$
such that if $(i,i+2) \notin \Arc(\Lambda)$ for some $i \in [n-1]$, then both $(i-1,i+1)$ and $(i+1,i+3)$ belong to $ \Arc(\Lambda)$.  

Recall the definition of the block decomposition of an element of $\fkn^*$ given just after Theorem \ref{heis-thm}, and define $\cX$ as the set of $\lambda \in \fkn^*$ with the property that 
each matrix $B$ in the block decomposition of $\lambda$ is $m\times m$ with $m\geq 2$ and has $B_{ij} \neq 0$ if and only if $j=i+1$.
It is not difficult to see that our previous
assertion is equivalent to the statement that every $C$-invariant Heisenberg character of $G$ is a constituent of a supercharacter $\chi_\lambda$ for a unique $\lambda \in \cX$.

Fix $\lambda \in \cX$ and let $\gamma = \sum_{j=1}^{n} e_{j,j+1}^* \in \fkn^*$.  
  The constituents of  $\chi_\lambda$ are the Heisenberg characters $\xi_\mu$ for $\mu \in G\lambda G$ and, as noted above, the elements of $C$ are the functions $ \theta_{t\gamma}$ for $t \in \FF_q$.  As in the proof of Proposition \ref{invar-prop}, since $\ker \gamma \supset \fkn^2$, the formula (\ref{xi-formula}) implies 
  $\theta_{t\gamma}\xi_\mu = \xi_{\mu + t\gamma}$ and so  
  the character $\xi_\mu$ for $\mu \in G\lambda G$ is $C$-invariant if and only if $\mu + \FF_q\spanning\{\gamma\}$ is a subset of the coadjoint orbit of $\mu$.   By construction, the two-sided $G$-orbit of $\lambda$ is $G\lambda G = \lambda + \FF_q\spanning \{ e_{j,j+1}^* : j \in [n] \}$ and so $\mu -\lambda$   is invariant under the two-sided action of $G$ for each $\mu \in G\lambda G$.  Consequently, if $\mu \in G\lambda G$ then $\mu + \FF_q\spanning\{\gamma\}$ is a subset of the coadjoint orbit of $\mu$ if and only if $\lambda + \FF_q\spanning\{\gamma\}$ is a subset of the coadjoint orbit of $\lambda$.  Thus,   
  if $\lambda +\FF_q\spanning\{\gamma\}$ is a subset of the coadjoint orbit of $\lambda$ then every irreducible constituent of $\chi_\lambda$ is a $C$-invariant Heisenberg character, while no irreducible constituents of $\chi_\lambda$ are $C$-invariant otherwise.

Let $(B_1,\dots,B_\ell)$ be the block decomposition of $\lambda$, and let $m_i\geq 2$ be the dimension of block $B_i$.  Note that each $B_i$ has a nonzero entry in each superdiagonal position, and zeros in all of other positions.
The sequence of positive numbers $(m_1,\dots,m_\ell)$ is a composition of $n$; call this the \emph{shape} of $\lambda \in \cX$.
As in the proof of Theorem \ref{heis-thm}, write $\lambda = \lambda_1 +\dots + \lambda_\ell$ where each $\lambda_i \in \fkn^*$ is the unique functional with block decomposition $(0,\dots,0,B_i,0,\dots,0)$ as in (\ref{decomp}).  Fix $i \in [\ell]$; then
$\lambda_i$ has the form
$\lambda_i=
t_j e_{j,j+2}^* + \dots + t_k e_{k,k+2}^*$ for some integers $1\leq j \leq k \leq n-1$ with $m_i = k-j+2$ where $t_j,\dots,t_k \in \FF_q^\times$, and we define  
$\gamma_i\in  \fkn^*$ by $\gamma_i = e_{j,j+1}^* + \dots + e_{k,k+1}^* + e_{k+1,k+2}^*$.
Then $\gamma = \gamma_1+\dots+\gamma_\ell$, and 
 a straightforward computation shows that $\lambda + \FF_q\spanning\{\gamma\}$ is a subset of the coadjoint orbit of $\lambda$ if and only if $\lambda_i + \FF_q\spanning\{\gamma_i\}$ is a subset of 
the coadjoint orbit of each $\lambda_i$.
By Lemmas \ref{tech-lem1} and \ref{tech-lem}, the coadjoint orbit of $\lambda_i$ is formed by adding $\lambda_i$ to all elements in some subspace of $\fkn^*$.  Hence,  every irreducible constituent of $\chi_\lambda$ is $C$-invariant if $\lambda_i + \gamma_i$ belongs to the coadjoint orbit of $\lambda_i$ for each index $i$, and $\chi_\lambda$ has no $C$-invariant irreducible constituents otherwise.  In the latter case,  the $C$-orbits of the irreducible constituents of $\chi_\lambda$ all have order $q$,
because $C$ acts freely on these constituents  since $\chi_\lambda$ is $C$-invariant.

Each $\lambda_i$ is determined by a choice of $m_i-1$ entries  $t_j,\dots,t_k \in \FF_q^\times$.  
If $m_i$ is even, then  part (1) of Lemma \ref{tech-lem} asserts that $\lambda_i + \gamma_i$ belongs to the coadjoint orbit of $\lambda_i$ for all $(q-1)^{m_i-1}$ choices of these entries.  If $m_i$ is odd, then part (2) of Lemma \ref{tech-lem} gives a formula the number of choices of entries which result in $\lambda_i+\gamma_i$ belonging to the coadjoint orbit of $\lambda_i$.  
From these lemmas, we conclude that
the number of functionals $\lambda \in \cX$ with a fixed shape $(m_1,\dots,m_\ell)$, for which all irreducible constituents of $\chi_\lambda$ are $C$-invariant, is the product $\prod_{i=1}^\ell \wt f_q(m_i)$, where 
\[ \wt f_q(m) = \begin{cases} (q-1)^{m-1},&\text{if $m$ is even;} \\
\frac{1}{q}\( (q-1)^{m-1} - (-1)^{(m-1)/2}(q-1)^{(m-1)/2}\),&\text{if $m$ is odd.}
\end{cases}
\]
By Lemma \ref{tech-lem}, the number of irreducible constituents of the supercharacter $\chi_{\lambda_i}$ is one if $m_i$ is even and $q$ if $m_i$ is odd.  Thus,  by \cite[Lemma 2.1]{M_C} (see Eq. (\ref{prod-map}) above), the number of irreducible constituents of $\chi_\lambda$ depends only on the shape $\textbf{m} = (m_1,\dots,m_\ell)$ and is $q^{|\{ i\in [\ell] : m_i \text{ is odd}\}|}$.  The product of this quantity with $\prod_{i=1}^\ell \wt f_q(m_i)$ is exactly $f_q(\textbf{m})$, and we conclude that $f_q(\textbf{m})$ is the number of $C$-invariant Heisenberg characters which appear as constituents of supercharacters indexed by functionals in $\cX$ with the shape $\textbf{m}$.  The sum of $f_q(\textbf{m})$ over all shapes $\textbf{m}$ of elements of $\cX$ is thus equal to the number of $C$-invariant Heisenberg characters of $\UT_{n+1}(\FF_q)$.  These shapes range over all compositions of $n$ whose parts are all at least two.  Since $f_q(\textbf{m}) =0$ if $\textbf{m}$ has any parts less than two, the proof  is complete. \end{proof}

The following observation is implicit in the arguments in  this proof.

\begin{observation}\label{heis-orb-obs}
The $C$-orbit of a Heisenberg character or supercharacter of $\UT_n(\FF_q)$ has size 1 or $q$.
\end{observation}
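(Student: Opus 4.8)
The plan is to reduce everything to a single structural fact: whenever $\lambda \in \fkt_n(\FF_q)^*$ satisfies $\ker\lambda \supset \fkt_n(\FF_q)^3$ --- the condition governing Heisenberg constituents by Lemma~\ref{heis-obs} --- both its coadjoint orbit $\lambda^G$ and its two-sided orbit $G\lambda G$ are cosets of $\FF_q$-subspaces of $\fkt_n(\FF_q)^*$. Abbreviate $\fkn = \fkt_n(\FF_q)$ and $G = \UT_n(\FF_q)$. For $g = 1+N \in G$ and $X \in \fkn$, the expansion $g^{-1}Xg = (1 - N + N^2 - \cdots)X(1+N)$ shows that $g^{-1}Xg - X - (XN - NX)$ lies in $\fkn^3$; since $\lambda$ annihilates $\fkn^3$ we obtain the \emph{exact} identity $(g\lambda g^{-1})(X) = \lambda(X) + \lambda(XN - NX)$. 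The correction $\phi_N \colon X \mapsto \lambda(XN - NX)$ depends $\FF_q$-linearly on $N$, so $\lambda^G = \lambda + V$ where $V = \{\phi_N : N \in \fkn\}$ is an $\FF_q$-subspace. The analogous computation for the two-sided action, using $g^{-1}Xh^{-1} - X + NX + XM \in \fkn^3$ for $g = 1+N$ and $h = 1+M$, shows $G\lambda G = \lambda + W$ for an $\FF_q$-subspace $W$.

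Next I would record how $C$ permutes the relevant characters. The functional $\gamma = e_{1,2}^* + e_{2,3}^* + \dots + e_{n-1,n}^*$ has $\ker\gamma \supset \fkn^2$, so it is fixed by the two-sided, and hence the coadjoint, action of $G$. As in the proofs of Proposition~\ref{invar-prop} and Lemma~\ref{compos}, the Kirillov formula (\ref{xi-formula}) then gives $\theta_{t\gamma}\xi_\mu = \xi_{\mu+t\gamma}$ for a Heisenberg character $\xi_\mu$, while (\ref{superchar-def}) gives $\theta_{t\gamma}\chi_\lambda = \chi_{\lambda+t\gamma}$ for a supercharacter. Since distinct Heisenberg characters $\xi_\mu$ index distinct coadjoint orbits and distinct supercharacters $\chi_\lambda$ index distinct two-sided orbits, the stabilizer in $C \cong \FF_q$ of such a character is
\[ \{t \in \FF_q : t\gamma \in V\} \quad\text{and}\quad \{t \in \FF_q : t\gamma \in W\},\]
respectively, in the Heisenberg-character and Heisenberg-supercharacter cases.

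The proof then concludes by exploiting the affine structure from the first step. The map $t \mapsto t\gamma$ is an injective $\FF_q$-linear map $\FF_q \to \fkn^*$, so the preimage of the $\FF_q$-subspace $V$ (respectively $W$) is an $\FF_q$-subspace of the one-dimensional $\FF_q$-vector space $\FF_q$, and therefore equals either $\{0\}$ or all of $\FF_q$. Hence the stabilizer is either trivial or the whole of $C$, and the corresponding $C$-orbit has size $q$ or $1$ accordingly. This also recovers, as a special case, the dichotomy used at the end of the proof of Lemma~\ref{compos}.

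I expect the only delicate point to be the bookkeeping in the first paragraph: one must check that every term beyond the linear one in the expansions of $g^{-1}Xg$ and $g^{-1}Xh^{-1}$ genuinely lands in $\fkn^3$, so that the linear approximation becomes an exact equality after applying $\lambda$. Once this is verified, the sets $V$ and $W$ are honest images of $\FF_q$-linear maps, and the remainder of the argument --- the reduction of orbit sizes to preimages of subspaces inside the one-dimensional space $\FF_q$ --- is purely formal.
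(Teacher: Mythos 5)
Your treatment of the Heisenberg cases is correct, and it is a nice unification of what the paper does piecemeal: the paper proves the supercharacter statement by appealing to the closed-form supercharacter formula \cite[Eq.\ 2.1]{Thiem}, and then handles a Heisenberg character $\psi$ by cases according to whether the supercharacter containing it is $C$-invariant (citing the free-action argument inside the proof of Lemma \ref{compos}, which rests on Lemmas \ref{tech-lem1} and \ref{tech-lem}) or not (in which case the $q$ translates of $\psi$ lie in $q$ distinct supercharacters). Your structural fact---that $\ker\lambda \supset \fkt_n(\FF_q)^3$ forces both $\lambda^G$ and $G\lambda G$ to be cosets $\lambda + V$ and $\lambda + W$ of $\FF_q$-subspaces, so that the $C$-stabilizer is the preimage of a subspace under the injective linear map $t \mapsto t\gamma$ and hence is $\{0\}$ or $\FF_q$---collapses both of the paper's cases into one step; the expansions you flag as the delicate point do check out, and the identification of Heisenberg characters with characters $\xi_\mu$ where $\ker\mu \supset \fkt_n(\FF_q)^3$ follows from Lemma \ref{heis-obs}, Proposition \ref{heis-cor}, and the fact (implicit in your own computation) that the condition $\ker\mu \supset \fkt_n(\FF_q)^3$ is preserved by the two-sided action.

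There is, however, a genuine gap in scope: the Observation asserts the dichotomy for \emph{all} supercharacters of $\UT_n(\FF_q)$, not only the Heisenberg ones, and this generality is actually needed---in the proof of Lemma \ref{alt-lem}(c) the Observation is applied to arbitrary irreducible supercharacters, which are indexed by noncrossing labeled partitions with arcs of any length, so their functionals need not annihilate $\fkt_n(\FF_q)^3$. Your key structural fact genuinely fails there. For example, the two-sided orbit of $\lambda = e_{14}^* \in \fkt_4(\FF_q)^*$ is the set of functionals $e_{14}^* + a e_{24}^* + b e_{34}^* + c e_{13}^* + d e_{12}^* + ac\, e_{23}^*$ with $a,b,c,d \in \FF_q$; since the $e_{23}^*$-coefficient is forced to equal the product of the $e_{24}^*$- and $e_{13}^*$-coefficients, this orbit is not a coset of a subspace. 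Without that affine structure, one only knows that the stabilizer $\{t \in \FF_q : \lambda + t\gamma \in G\lambda G\}$ is an additive subgroup of $\FF_q^+$, and when $q$ is a proper prime power such subgroups can be proper and nontrivial, which would permit orbit sizes strictly between $1$ and $q$. So the non-Heisenberg half of the statement requires a different ingredient---the paper's route via the supercharacter formula \cite[Eq.\ 2.1]{Thiem}, or a direct analysis of how adding $t\gamma$ changes the labeled set partition indexing the two-sided orbit---and your proposal as written does not supply it.
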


\begin{proof} It is an easy exercise to deduce from the supercharacter formula \cite[Eq.\ 2.1]{Thiem} that 
 the $C$-orbit of a supercharacter has size 1 or $q$.
We noted at the end of the fifth paragraph of the preceding proof that if a Heisenberg character is a constituent a $C$-invariant supercharacter, then its $C$-orbit has size 1 or $q$.   If a Heisenberg character is a constituent of a supercharacter which is not $C$-invariant, then the $C$-orbit of the Heisenberg character has size $q$ since each character in the orbit is a constituent of a distinct supercharacter.
\end{proof}

%We will prove that the formula in Lemma \ref{compos} coincides with the polynomial $\In{n}{r}$ by demonstrating that they both satisfy the same linear recurrence.  In detail, 
%Using the definition of $\In{n}{r}$ and the obvious recurrence satisfied by the coefficients $\I(a,b)$, 
Returning to the proof of Theorem \ref{c-heis-thm}, we note that one can deduce from the generating function of $\preIn{n}{x}$ that%$\In{n}{x} = x \In{n-1}{x} + x(x+1)\In{n-3}{x}$ for $n\geq 4$, with $\In{1}{x} = 0$ and $\In{2}{x} = x$ and $\In{3}{x} = x^2+x$.
\be\label{hec-rec} %\In{n}{r} 
\In{n}{x}= \begin{cases}
0, & \text{if $n=1$};\\
x, & \text{if $n=2$};\\
x(x+1), & \text{if $n=3$};\\
x \In{n-1}{x} + x(x+1)\In{n-3}{x},&\text{if $n\geq 4$}.\end{cases}\ee
%%Thus the three nonzero terms of the Hankel transform of $\{ \He{n}{r}\}_{n=0}^\infty$ are $h_1 = r$, $h_2 = -r^2(2r+1)$, and $h_3 = -r^5(r+1)^2$.
%%
%%
We  now demonstrate that the formula in Lemma \ref{compos} also satisfies this recurrence.

\begin{proof}[Proof of Theorem \ref{c-heis-thm}]
Fix $q$ and write $a_n$ for the number of $C$-invariant Heisenberg characters of $\UT_{n+1}(\FF_q)$.  We compute from  Lemma \ref{compos} that $a_n =\In{n}{q-1}$ for $n=1,2,3$, so to show $a_n =\In{n}{q-1}$ for all $n\geq 1$, it suffices to prove  $a_n = (q-1)a_{n-1} + q(q-1) a_{n-3}$ for $n\geq 4$. % as $\In{n}{q-1}$ also satisfies this recurrence.
To this end, observe that 
the formula in  Lemma \ref{compos} implies
%It is evident that $a_0=1$ and $a_1=0$, and we compute from the preceding lemma that $a_2 = q-1$ and $a_3 = (q-1)^2 + (q-1)$.  Thus $a_n = \In{n}{q-1}$ for $n=0,1,2,3$, so to show that $a_n = \In{n}{q-1}$ for all $n\geq 0$, it suffices to prove  $a_n = (q-1)a_{n-1} + q(q-1) a_{n-3}$ for $n\geq 4$.
\be\label{a_n-id} a_n = \sum_{k=2}^{n} \((q-1)^{k-1} - \sin(k\pi /2) (q-1)^{(k-1)/2}\) a_{n-k},\qquad\text{for }n\geq 2.\ee
In particular, each term in this sum corresponds to the sum $\sum_{\textbf{c}} f_q(\textbf{c})$ over all 
compositions $\textbf{c}$ of $n$ whose parts are all at least two and whose last part is $k$.
Fix $n\geq 4$.  Applying (\ref{a_n-id}) to $a_n$ and $a_{n-1}$, we obtain
\[
a_n -(q-1)a_{n-1} = (q-1) a_{n-2}
+
\sum_{k=2}^{n-1} \(\sin(k\pi/2)(q-1)^{(k+1)/2} - \cos(k\pi /2) (q-1)^{k/2}\) a_{n-1-k}.
\]
Expanding the $a_{n-2}$ term on the right hand side via a second application of (\ref{a_n-id}), this equation becomes
\[ a_n -(q-1)a_{n-1} = 
(q-1)a_{n-3}
+
\sum_{k=4}^{n-1} \((q-1)^{k-1}+\sin(k\pi/2)(q-1)^{(k+1)/2}\) a_{n-1-k}
.\]  Shifting indices and comparing with (\ref{a_n-id}) shows that the sum on the right hand side is $(q-1)^2 a_{n-3}$; notably, when $n=4$ and the sum is zero, this still holds because $a_1 = 0$.  Thus, $a_n = (q-1)a_{n-1} + q(q-1) a_{n-3}$
for $n\geq 4$ as required.
\end{proof}

\section{Applications to the alternating subgroup of $\UT_n(\FF_q)$}\label{alt}
\label{alt-sect}

Fix $n\geq 2$ and recall that $\hom : \UT_n(\FF_q) \to \FF_q^+$ is the homomorphism $g \mapsto \sum_{i=1}^{n-1} g_{i,i+1}$.  When $q=2$, composing this map with the nontrivial homomorphism $\theta : \FF_2 \to \CC^\times$ yields the unique homomorphism
$\UT_n(\FF_2)\to \CC^\times$ which assigns $-1$ to each of the $n-1$ generators $1+e_{i,i+1}$.  In this sense, $\hom$ serves as something of a unitriangular analogue for the sign homomorphism of the symmetric group, and we therefore refer to the kernel of $\hom$, i.e., the algebra group 
\[ \UT^\hom_n(\FF_q) \omdef = \left\{ g \in \UT_n(\FF_q) : \sum_{i=1}^{n-1}g_{i,i+1} = 0\right\}, \qquad\text{for }n\geq 2,\]
as the \emph{alternating subgroup} of $\UT_n(\FF_q)$.  %By convention, we define $\UT^\hom_1(\FF_q)=1$.
We discussed this algebra group briefly in Example 5.1 in \cite{M_normal}, which computed a formula for the number of its supercharacters.  Here, using the results of the previous section, we give a more comprehensive set of formulas, counting in addition the numbers of irreducible supercharacters, Heisenberg supercharacters, and Heisenberg characters. % of $\UT^\hom_n(\FF_q)$.  

To derive these formulas we require the following lemma, a straightforward exercise in the Clifford theory.

\begin{lemma}\label{alt-lem}
Fix an integer $n\geq 2$ and a prime power $q>1$. The following statements hold for supercharacters:

\begin{enumerate}
\item[(a)] Each supercharacter of $\UT_n(\FF_q)$ restricts to a supercharacter of $\UT^\hom_n(\FF_q)$, and every supercharacter  of $\UT^\hom_n(\FF_q)$ arises in this way. 
\item[(b)] Two supercharacters of $\UT_n(\FF_q)$ have the same restriction to $\UT^\hom_n(\FF_q)$ if and only if they belong to the same $C$-orbit.
\item[(c)]
A supercharacter $\chi$ of $\UT_n(\FF_q)$ restricts to an irreducible character of $\UT^\hom_n(\FF_q)$ if and only if $\chi$ is irreducible and not $C$-invariant.
\end{enumerate}
Likewise, the following statements hold for Heisenberg characters:
\begin{enumerate}
\item[(d)] A Heisenberg character $\psi$ of $\UT_n(\FF_q)$ restricts to a sum of $q$ distinct Heisenberg characters of $\UT^\hom_n(\FF_q)$ if $\psi$ is  $C$-invariant, and to a single Heisenberg character of $\UT^\hom_n(\FF_q)$ otherwise. 

\item[(e)] Every Heisenberg character of $\UT^\hom_n(\FF_q)$ appears as an irreducible constituent of the restriction of a Heisenberg character of $\UT_n(\FF_q)$.  

\item[(f)] The restrictions to $\UT^\hom_n(\FF_q)$ of two Heisenberg characters $\psi,\psi'$ of $\UT_n(\FF_q)$ are equal if $\psi$ and $\psi'$ belong to the same $C$-orbit, and they share no irreducible constituents otherwise.
\end{enumerate}

\end{lemma}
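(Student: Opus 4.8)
The plan is to read the whole lemma as Clifford theory for the normal subgroup $H=\UT^\hom_n(\FF_q)=1+\fkn'$ of $G=\UT_n(\FF_q)$, where $\fkn=\fkt_n(\FF_q)$ and $\fkn'=\{X\in\fkn:\sum_iX_{i,i+1}=0\}$ is the codimension-one ideal cut out by $\hom$. Since $\hom$ maps $G$ onto $\FF_q^+$, the subgroup $H$ is normal of index $q$ with $G/H\cong\FF_q^+$, and $C$ is exactly the group $\widehat{G/H}$ of linear characters of $G$ that are trivial on $H$; thus $|C|=q$ and $\psi|_H=1_H$ for each $\psi\in C$. Two soft facts will drive everything. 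First, $(\chi\psi)|_H=\chi|_H$ for every character $\chi$ and every $\psi\in C$, so restriction to $H$ is constant on $C$-orbits. Second, the projection formula gives $\Ind_H^G(\chi|_H)=\chi\cdot\Ind_H^G 1_H=\sum_{\psi\in C}\chi\psi$, so Frobenius reciprocity yields
\[\langle\chi|_H,\chi'|_H\rangle=\sum_{\psi\in C}\langle\chi,\chi'\psi\rangle.\]
Together with Observation \ref{heis-orb-obs}, which forces the $C$-orbit of a supercharacter or Heisenberg character to have size $1$ or $q$, this identity gives the basic dichotomy: $\langle\chi|_H,\chi|_H\rangle$ equals $\langle\chi,\chi\rangle$ when $\chi$ is not $C$-invariant and $q\langle\chi,\chi\rangle$ when it is.

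For (a)--(c) I would pair this dichotomy with the functional picture. Restriction of characters mirrors the restriction of functionals $r:\fkn^*\to(\fkn')^*$, which is the quotient by the annihilator $\FF_q\gamma$ of $\fkn'$; since $\gamma$ is fixed by the two-sided action of $G$, the map $r$ is $G$-equivariant and carries each two-sided $G$-orbit onto a single two-sided orbit in $(\fkn')^*$. Interpreting the supercharacters of $H$ as those of the restricted supercharacter theory (\cite{DI}; cf.\ \cite{M_normal}), this yields (a) with its surjectivity clause and identifies the fibre of the induced map on supercharacters with a $C$-orbit, giving (b). Part (c) is then immediate from the dichotomy, since $\chi|_H$ is irreducible exactly when $\langle\chi|_H,\chi|_H\rangle=1$, i.e.\ when $\langle\chi,\chi\rangle=1$ and no factor of $q$ appears.

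The Heisenberg statements (f) and the non-invariant case of (d) are equally soft. Part (f) is immediate from the two facts above. If a Heisenberg character $\chi$ is not $C$-invariant, the dichotomy gives $\langle\chi|_H,\chi|_H\rangle=1$, so $\chi|_H$ is a single irreducible character; it is Heisenberg for $H$ because $\ker(\chi|_H)\supseteq 1+\fkn^3\supseteq 1+(\fkn')^3\supseteq[H,[H,H]]$. For (e), I would note that every Heisenberg character $\eta$ of $H$ is, by the algebra-group version of Lemma \ref{heis-obs}(iii) recorded in \cite[Proposition 3.3]{M_normal}, a constituent of a supercharacter of $H$ indexed by a functional killing $(\fkn')^3=\fkn^3$; by (a) this supercharacter is $\chi_\lambda|_H$ for a supercharacter $\chi_\lambda$ of $G$ with $\ker\lambda\supseteq\fkn^3$, whose constituents are Heisenberg characters of $G$ by Proposition \ref{heis-cor}, so $\eta$ sits inside the restriction of one of them. (The equalities $(\fkn')^k=\fkn^k$ for $k\geq2$ used here are a routine computation with elementary matrices.)

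The main obstacle is the $C$-invariant case of (d): there the dichotomy only gives $\langle\chi|_H,\chi|_H\rangle=q$, so Clifford's theorem writes $\chi|_H=e(\vartheta_1+\dots+\vartheta_t)$ with $e^2t=q$, and I must rule out $e>1$ to obtain exactly $q$ distinct constituents. This will not follow from generic Clifford theory, because $G/H\cong\FF_q^+$ is only elementary abelian and the obstructing $2$-cocycle need not split; instead I would compute the restriction outright. By Proposition \ref{invar-prop} such a $\chi$ is $\xi_P$ for a path $P$ using only the steps $\d$ and $\uu$, so $\lambda_P$ is a sum of blocks of the kind treated in Lemmas \ref{tech-lem1} and \ref{tech-lem} and its coadjoint orbit is known explicitly. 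Restricting the Kirillov formula (\ref{xi-formula}) for $\xi_P$ to $H$ simply drops the $\gamma$-coordinate of each functional in the coadjoint orbit, and the explicit orbit description should show that these restricted functionals fall into exactly $q$ distinct coadjoint $H$-orbits, each contributing one irreducible Kirillov function of $H$. That multiplicity-freeness forces $e=1$ and $t=q$, completing (d) and with it the lemma.
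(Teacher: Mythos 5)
Most of your proposal is sound and runs parallel to the paper's own proof: parts (a) and (b) are deferred (by both you and the paper) to \cite[Lemma 5.1]{M_normal}, and your inner-product identity $\langle\chi|_H,\chi'|_H\rangle=\sum_{\psi\in C}\langle\chi,\chi'\psi\rangle$ together with Observation \ref{heis-orb-obs} gives (c), (f), and the non-invariant half of (d) exactly as the paper does via Frobenius reciprocity and \cite[Problem 6.2]{Isaacs}. One caveat on (e): the fact you actually need is the equality $[H,[H,H]]=1+(\fkt_n(\FF_q))^3$, not merely $(\fkn')^k=\fkn^k$ for $k\geq 2$. The containment $[H,[H,H]]\subset 1+(\fkn')^3$ is automatic for any algebra group, but to apply \cite[Proposition 3.3]{M_normal} to a Heisenberg character of $H$ you need the reverse inclusion, and since $\fkn'$ is \emph{not} a pattern algebra (it is not spanned by elementary matrices), Lemma \ref{heis-obs}(i) does not apply to it; the paper states this equality explicitly and leaves it as an exercise, whereas your "routine computation with elementary matrices" mislabels where the work lies.

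The genuine gap is the $C$-invariant case of (d), which you correctly isolate as the crux but then only sketch. The sentence "the explicit orbit description \emph{should show} that these restricted functionals fall into exactly $q$ distinct coadjoint $H$-orbits, each contributing one irreducible Kirillov function of $H$" is precisely the claim requiring proof, and it is not routine: $H$ is not a pattern group, Kirillov functions of algebra groups need not be characters (as the paper itself warns after (\ref{xi-formula})), and the obvious coefficient count is circular — if the constituents of $\xi_P|_H$ were Kirillov functions of orbits of size $e^2q^{n-3}$, matching coefficients of the $\theta_\nu$ just reproduces the equation $e^2t=q$ you started with. Moreover, your reason for abandoning Clifford theory ("the obstructing $2$-cocycle need not split") misses the paper's actual resolution, which \emph{is} generic Clifford theory supplemented by one arithmetic ingredient you never invoke: Isaacs' theorem \cite[Theorem A]{I95} that every irreducible character of an $\FF_q$-algebra group has degree a power of $q$. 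Writing $\Res_H^G(\chi)=e\sum_{i=1}^t\psi_i$, the vanishing of $\chi$ off $H$ gives $e^2t=q$, while $\chi(1)=et\,\psi_1(1)$ with both $\chi(1)$ and $\psi_1(1)$ powers of $q$ forces $et$ to be a power of $q$; together these equations have the unique solution $e=1$, $t=q$. This closes the case in three lines, with no cocycle splitting and no orbit computation, and is the step your proposal is missing.
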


\begin{proof}
Let $G = \UT_n(\FF_q)$ and $H = \UT^\hom_n(\FF_q)$, and note that $|C| = |G|/|H|=q$ since $n\geq 2$.  
Problem (6.2) in \cite{Isaacs} asserts that if $\psi$ is an irreducible character of $H$ then $\Ind_H^G(\psi) = f \sum_{i=1}^s \chi_i$ where $f$ is an integer and the $\chi_i$ comprise a $C$-orbit of irreducible characters of $G$.  This fact with Frobenius reciprocity suffices to prove (f).
For any $i$, we have $q\psi(1) = fs \chi_i(1)$ and $f = \langle \psi, \Res_H^G(\chi_i) \rangle_H$ by Frobenius reciprocity.  Thus $\psi = \Res_H^G(\chi_i)$ if and only if $f=1$ and $s=q$, and we conclude that an irreducible character $\chi$ of $G$ restricts to an irreducible character of $H$ if and only if the $C$-orbit of $\chi$ has size $q$.

Example 5.1 in \cite{M_normal} derives (a) and (b)  from \cite[Lemma 5.1]{M_normal}.  Certainly only an irreducible supercharacter of $G$ can restrict to an irreducible supercharacter $\chi$ of $H$, and it follows from the previous paragraph and Corollary \ref{heis-orb-obs} that if $\chi$ is irreducible then this occurs if and only if  $\chi$ is not $C$-invariant. This proves (c).
%
%The fact that a character of $H$ is Heisenberg if and only if it appears as an irreducible constituent of the restriction of a Heisenberg character of $G$ follows from the observation that

Part (e) is immediate from the fact that $[G,[G,G]] = [H,[H,H]] = 1 + (\fkt_n(\FF_q))^3$, the proof of which we leave as an exercise.  
%When $n\leq 3$ this holds because $[H,H]=1+(\fkt_n(\FF_q))^3=1$.  If $n=4$ then the containment $[H,[H,H]] \subset [G,[G,G]] = 1 + \FF_q\spanning \{ e_{1,4}\}$ is an equality since for any $t\in \FF_q$ we have $1+te_{1,4} = [x,[x,y]]$ for $x = 1 + e_{1,2}-e_{2,3} \in H$ and $y =1+te_{1,2} - te_{3,4}\in H$.  
%When $n>4$ we see by shifting indices that $1+t e_{i,i+3} \in [H,[H,H]]$  for all $i \in [n-3]$ and $t \in \FF_q$.  Since $1+te_{ij} =[1+e_{i,i+1}-e_{i+1,i+2}, 1+te_{i+1,j}]$ for $i+4\leq j\leq n$ it follows that $[H,[H,H]] \supset 1+(\fkt_n(\FF_q)^3$ whence $[H,[H,H]] = [G,[G,G]]$ as desired.  This observation shows that each Heisenberg character of $G$ restricts to a sum of Heisenberg characters of $H$, and that if $\psi$ is a Heisenberg character of $H$ then $\ker \(\Ind_H^G (\psi) \)\supset [G,[G,G]]$, which implies by Frobenius reciprocity $\psi$ is a constituent of the restriction of some Heisenberg character of $G$.
To prove (d),
let $\chi$ be a Heisenberg character of $G$.  By 
Clifford's theorem \cite[Theorem (6.2)]{Isaacs}, the restriction of $\chi$ to $H$ has the form $\Res_H^G(\chi) = e \sum_{i=1}^t \psi_i$ for a positive integer $e$ and $t$ distinct Heisenberg characters $\psi_i$ of $H$ with the same degree.  
If $\chi$ is $C$-invariant, then $\chi(g) = 0$ for all $g \in G-H$ so 
$ 1 = \langle \chi,\chi\rangle_G = \tfrac{|H|}{|G|} \langle \Res_H^G(\chi) ,\Res_H^G(\chi) \rangle_H =  e^2t/q$ and $e^2 t = q$.  On the other hand, $\chi(1) = e t \psi_1(1)$ and both $\chi(1)$ and $\psi_1(1)$ are powers of $q$ by \cite[Theorem A]{I95}.  Hence $e$ and $t$ are both powers of $q$; since these numbers are integers with $e^2t=q$, we must have $e=1$ and $t=q$. Thus a $C$-invariant Heisenberg character of $G$ restricts to a sum of $q$ distinct Heisenberg characters of $H$.
If $\chi$ is not $C$-invariant then by Observation \ref{heis-orb-obs}
the $C$-orbit of $\chi$ has order $q$, and it follows as in the supercharacter case that $\chi$ restricts to an irreducible character of $H$.  
%
%In more detail, \cite[Problem (6.2)]{Isaacs} asserts that if $\psi$ is an irreducible character of $H$ then $\Ind_H^G(\psi) = f \sum_{i=1}^s \chi_i$ where $f$ is an integer and the $\chi_i$ comprise a $C$-orbit of irreducible characters of $G$.  This fact suffices to prove the last sentence in (2).
\end{proof}

We are now prepared to list the character counting formulas promised for $\UT^\hom_n(\FF_q)$. 
For each positive integer $n$, define the following rational functions in an indeterminate $x$:
\begin{enumerate}
\item[] $\AltBell{n+1}{x}  \omdef=\tfrac{1}{x+1} \Bigl( \Bell{n+1}{x} -\Fe{n}{x}\Bigr) + \Fe{n}{x}$;
\item[] $\AltCat{n+1}{x} \omdef=\tfrac{1}{x+1} \Bigl(\Cat{n+1}{x} - (-x)^{\lfloor n/2\rfloor } \Cat{n+1}{-1}\Bigr)$; 
\item[] $\AltDel{n+1}{x} \omdef=\tfrac{1}{x+1} \Bigl( \Del{n+1}{x} - (-x)^{\lfloor n/2\rfloor} \Del{n+1}{-1}\Bigr)$; 
\item[] $\ds\AltHe{n+1}{x}\omdef = \tfrac{1}{x+1}\(\He{n+1}{x} -\In{n}{x}\) + (x+1)\In{n}{x}$.
\end{enumerate}
It is immediate from Lemma \ref{alt-lem} combined with Proposition \ref{fe-thm}, Corollary \ref{c-irr-thm}, and Theorem \ref{c-heis-thm}  that $\AltBell{n}{q-1}$, $\AltCat{n}{q-1}$, $\AltDel{n}{q-1}$, and $\AltHe{n}{q-1}$ respectively count the supercharacters, irreducible supercharacters, Heisenberg supercharacters, and Heisenberg characters of the alternating subgroup $\UT^\hom_n(\FF_q)$.
To see that these rational functions are actually polynomials in $x$ with nonnegative integer coefficients, we first note from Eq. (\ref{alt-eq}) and Proposition \ref{preHe-formulas} that 
\begin{enumerate}
\item[] $\ds\AltBell{n+1}{x}  \omdef=  \sum_{k=0}^{n} \binom{n}{k} \Fe{n-k}{x} (x+1)^{\delta_{k}+k-1}$ ,
\item[] $\ds\AltCat{n+1}{r} \omdef=\sum_{k=0}^{\lfloor (n-1)/2\rfloor} \cC_k \binom{n}{2k} x^k (x+1)^{n-2k-1}$,
\item[] $\ds\AltDel{n+1}{r} \omdef=\sum_{k=0}^{\lfloor (n-1)/2\rfloor}  \binom{n-k}{k} x^k (x+1)^{n-2k-1}$.
%\item[] $\ds\AltHe{n+1}{x}\omdef = \tfrac{1}{x+1}\(\He{n+1}{x} -\In{n}{x}\) + (x+1)\In{n}{x}$.
%= \sum_{k=0}^{\lfloor n/2 \rfloor} \(\binom{n-k}{k} + \binom{n-k-1}{k}x \) x^k (1+x)^{n-k-2}+ \sum_{k=0}^{\lfloor (n-1)/3 \rfloor} \(\binom{n-2k-3}{n-3k-3} + \binom{n-2k-2}{n-3k-2}x\) x^{n-2k-1} (1+x)^{k-1}(2+x).$
\end{enumerate}
The function $\AltHe{n+1}{x}$ is likewise a polynomial with nonnegative coefficients since by Corollary \ref{He-formulas}, 
$ \AltHe{2}{x} = 1$ and $\AltHe{3}{x} = (x+1)^2$ 
and  for $n\geq 4$, both $\He{n+1}{x}$ and $\In{n}{x}$ are divisible by $x+1$.  %This observation also shows that $\AltHe{n+1}{x}$ has nonnegative integer coefficients for all $n\geq 1$.
%
%These polynomials relate to the characters of $\UT^\hom_n(\FF_q)$ as one might expect:
%The following theorem makes clear what is implicit in our notation.
These observation provide us with the following result, which implies Theorem \ref{thm3} in the introduction.

\begin{theorem} \label{alt-thm}
The polynomials $\AltBell{n}{x}$, $\AltCat{n}{x}$, $\AltDel{n}{x}$, and $\AltHe{n}{x}$ have nonnegative integer coefficients, and their values at $x=q-1$ are respectively the numbers of supercharacters, irreducible supercharacters, Heisenberg supercharacters, and Heisenberg characters of the alternating subgroup $\UT^\hom_n(\FF_q) \subset \UT_n(\FF_q)$ for $n\geq 2$.
\end{theorem}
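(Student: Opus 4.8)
The plan is to reduce every count to a count of $C$-orbits of characters of the ambient group $\UT_n(\FF_q)$ and then invoke the explicit polynomial formulas already assembled for those orbits. Lemma \ref{alt-lem} is the engine: parts (a)--(c) identify the supercharacters, irreducible supercharacters, and (as a special case) Heisenberg supercharacters of $\UT^\hom_n(\FF_q)$ with $C$-orbits of the corresponding characters of $\UT_n(\FF_q)$, while parts (d)--(f) govern the more delicate behavior of Heisenberg characters. First I would record the two distinct orbit-counting patterns. For supercharacters, each $C$-orbit restricts to a single supercharacter of $\UT^\hom_n(\FF_q)$, so the count is (number of $C$-invariant supercharacters) $+\tfrac{1}{q}$(number of non-invariant supercharacters); substituting the total $\Bell{n}{q-1}$ and the $C$-invariant count $\Fe{n-1}{q-1}$ from Proposition \ref{fe-thm} reproduces $\AltBell{n}{q-1}$. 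For irreducible and Heisenberg supercharacters only the non-invariant ones restrict irreducibly, so the count is simply $\tfrac{1}{q}$(total $-$ invariant); the invariant counts come from Corollary \ref{c-irr-thm}, and with the totals $\Cat{n}{q-1}$ and $\Del{n}{q-1}$ these yield $\AltCat{n}{q-1}$ and $\AltDel{n}{q-1}$.

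The Heisenberg-character count requires the opposite bookkeeping, and this is the step to watch. By Lemma \ref{alt-lem}(d) a $C$-invariant Heisenberg character restricts to $q$ distinct Heisenberg characters of $\UT^\hom_n(\FF_q)$ rather than merging into one, while the non-invariant ones fall into $C$-orbits of size $q$ each contributing a single character; parts (e) and (f) guarantee that these exhaust $\UT^\hom_n(\FF_q)$ without overlap. Hence the count is $q\cdot$(invariant) $+\tfrac{1}{q}$(total $-$ invariant), and with the total $\He{n}{q-1}$ from Theorem \ref{heis-thm} and the invariant count $\In{n-1}{q-1}$ from Theorem \ref{c-heis-thm} this is exactly $\AltHe{n}{q-1}$. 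The factor $q$ in front of the invariant term is the origin of the extra summand $(x+1)\In{n}{x}$ in the definition of $\AltHe{n+1}{x}$, and it is the one genuinely asymmetric feature of the argument.

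It then remains to check that the four rational functions are honest polynomials with nonnegative integer coefficients. Nonnegativity I would read off from the alternate expansions recorded just before the theorem: the expressions for $\AltBell{n+1}{x}$, $\AltCat{n+1}{x}$, and $\AltDel{n+1}{x}$ obtained from Eq.\ (\ref{alt-eq}) and Proposition \ref{preHe-formulas} are manifestly sums of nonnegative binomial, Catalan, and feasible-Stirling coefficients times monomials in $x$ and $(x+1)$. Polynomiality is the subtler point, since each definition divides by $x+1$ and I must verify that $x+1$ divides the relevant numerator. For the three supercharacter families this divisibility is precisely what the reindexed identity (\ref{alt-eq}) exhibits: the $k=0$ term of $\Bell{n+1}{x}$ is $\Fe{n}{x}$, which the subtraction cancels, and the analogous cancellations occur for the Catalan and Delannoy evaluations at $-1$. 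For $\AltHe{n+1}{x}$ I would split off the base cases $\AltHe{2}{x}=1$ and $\AltHe{3}{x}=(x+1)^2$ by direct computation, and for $n\geq 4$ argue via Corollary \ref{He-formulas} that both $\He{n+1}{x}$ and $\In{n}{x}$ are divisible by $x+1$: every term of the displayed sum for $\He{n+1}{x}$ carries a factor $(1+x)^{n-k-1}$ with $n-k-1\geq 1$, and the recurrence (\ref{hec-rec}) makes $\In{n}{x}$ divisible by $x+1$ for $n\geq 3$ by an easy induction. This renders the first summand of $\AltHe{n+1}{x}$ a polynomial, while $(x+1)\In{n}{x}$ is visibly one.

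The main obstacle is therefore not a single hard computation but the correct handling of the Heisenberg case's asymmetry together with the divisibility-by-$(x+1)$ checks; once Lemma \ref{alt-lem} and the earlier enumerations are in hand, the theorem is an exercise in assembling these pieces and confirming that the ``$+(x+1)\In{n}{x}$'' and ``$+\Fe{n}{x}$'' corrections land exactly where the orbit structure predicts.
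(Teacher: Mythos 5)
Your proposal is correct and takes essentially the same route as the paper's own proof: it combines Lemma \ref{alt-lem} (with the orbit sizes $1$ or $q$ from Observation \ref{heis-orb-obs}) with Proposition \ref{fe-thm}, Corollary \ref{c-irr-thm}, and Theorems \ref{heis-thm} and \ref{c-heis-thm} to match the four orbit counts against the defining formulas, and then reads off polynomiality and nonnegativity from the alternate expansions, including the base cases $\AltHe{2}{x}=1$, $\AltHe{3}{x}=(x+1)^2$ and the divisibility of $\He{n+1}{x}$ and $\In{n}{x}$ by $x+1$. The only cosmetic difference is that you establish the divisibility of $\In{n}{x}$ by induction on the recurrence (\ref{hec-rec}) rather than by citing Corollary \ref{He-formulas}; both routes are valid.
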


%
%\begin{proof} Our assertion concerning coefficient nonnegativity is clear. 
%Since by Equation (\ref{alt-eq}) and Proposition \ref{preHe-formulas} we have
%\[ \ba
%\AltBell{n+1}{x}  &=\tfrac{1}{x+1} \Bigl( \Bell{n+1}{x} -\Fe{n}{x}\Bigr) + \Fe{n}{x},
%\\
%\AltCat{n+1}{x} &=\tfrac{1}{x+1} \(\Cat{n+1}{x} - (-x)^{\lfloor n/2\rfloor } \Cat{n+1}{-1}\)
%,
%\\
%\AltDel{n+1}{x} &=\tfrac{1}{x+1} \( \Del{n+1}{x} - (-x)^{\lfloor n/2 \rfloor} \Del{n+1}{-1}\),
%\ea\]
%the rest of the theorem is a straightforward consequence of Lemma \ref{alt-lem} combined with Proposition \ref{fe-thm}, Corollary \ref{c-irr-thm}, and Theorem \ref{c-heis-thm}.
%%Lemma \ref{alt-lem} that $\AltBell{n+1}{q-1}$, $\AltCat{n+1}{q-1}$, and $\AltDel{n+1}{q-1}$ count the ordinary, irreducible, and Heisenberg supercharacters of $\UT^\hom_{n+1}(\FF_q)$.
%%
%%
%%Our claim concerning the values of these polynomials at $r=q-1$ follows without difficulty by combining Lemma \ref{alt-lem} with (\ref{bell-thm}) and Proposition \ref{fe-thm} to count  supercharacters, with Theorems \ref{nc-thm} and \ref{c-irr-thm} to count irreducible supercharacters, with Theorem \ref{del-thm} and \ref{c-irr-thm} to count Heisenberg supercharacters, and with Theorems \ref{heis-thm} and \ref{c-heis-thm} to count Heisenberg characters.
%\end{proof}

\begin{remarks}
Some remarks are in order.
\begin{enumerate}
\item[(i)] Proposition 5.2 in \cite{M_normal} gives the alternate formula 
\[ \AltBell{n+1}{x} = \sum_{k=0}^n \frac{x^k + (-1)^k x}{x+1} \binom{n}{k} \Bell{n-k}{x},\qquad\text{for }n\geq 1.\] 
Noting the well-known recurrence $\Bell{n+1}{x} = \sum_k x^k \binom{n}{k}  \Bell{n-k}{x} $ \cite{KerberFollow,Rogers,Yan}, one finds that 
the
sequences of differences $\{ \AltBell{n}{1} - \Bell{n-1}{1}\}_{n=2}^\infty=(0,1,3,13,55,256,1274,\dots)$ \cite[A102287]{OEIS} and $\{ \Bell{n}{1}-\AltBell{n}{1}\}_{n=2}^\infty = (1,2,7,24,96,418,1989,\dots)$ \cite[A102286]{OEIS} give respectively the  numbers of distinguished even and odd blocks in all set partitions of $[n-1]$.  By \emph{distinguished}, we mean that we consider two blocks to be equal if and only if they are equal as sets and belong to the same set partition.  %(By even/odd, we mean having an even/odd number of elements.) 
For example, the set partitions of $\{1,2,3\}$ (in abbreviated notation) are $1|2|3$, $12|3$, $13|2$, $1|23$, and $123$, and they contain 10 distinguished blocks, of which 3 are even and 7 are odd.  %Verifying this claim requires a short argument, which we give below.

\item[(ii)] If $C(z)$ is the ordinary generating function for $\cC_n=\Cat{n}{1}$ and $D(z) = 1-2z+zC(z^2)$ then one finds that $\frac{1}{2}\( C(z) -D(z)\) = \sum_{n\geq 2} \AltCat{n}{1} z^n$.  Comparing this to the generating function listed for sequence A000150 in \cite{OEIS},  we find that
 $\{ \AltCat{n}{1} \}_{n=2}^\infty = (1,2,7,20,66,212,715,\dots)$ gives
the number of Dyck paths with $2n$ steps having an odd number of peaks at even height.  These numbers also count the number of dissections of an $n$-gon, rooted at an exterior edge, which are asymmetric with respect to that edge.  %To form a dissection of an $n$-gon rooted at an exterior edge, fix an edge and triangulate the $n$-gon; we then consider two such dissections to be the same if the rooted triangles are identical (with the same orientation) and one set of remaining pieces can be rearranged by translations and rotations to form the other set of remaining pieces. A proof that the formula for this sequence matches $\AltCat{n}{1}$ appears in \cite{charm}.

\item[(iii)] The integers $\{ \AltDel{n}{1} \}_{n=0}^\infty = (0,1,2,6,14,35,84,\dots)$ appear as sequence A105635 in \cite{OEIS}, which identifies these numbers as the row sums of two related integer  arrays.

%\item[(iv)] Noting that $\AltCat{n}{-1} =  \frac{d}{dr} \Cat{n}{r} \big\vert_{r=-1} + \lfloor \frac{n-1}{2} \rfloor \Cat{n}{-1}$ and using the formula (\ref{alternate}), we find for $n\geq 1$ that
%\[ 
%\Cat{n}{-1} =\begin{cases} 0,&\text{if $n=2k$}; \\
%\frac{(-1)^k}{k+1} \binom{2k}{k},&\text{if $n=2k+1$}; \end{cases}
%\quad\text{and}\quad
%\AltCat{n}{-1} =\begin{cases} \frac{(-1)^{k}}{2} \binom{2k}{k},&\text{if $n=2k$}; \\
%0,&\text{if $n=2k+1$}. \end{cases}\]
%Curiously, the moment statistics of the normalized Frobenius trace $a_1$ 
%studied in \cite{kedland} converge for elliptic curves  without and with complex multiplication, respectively,  to the 
%unsigned sequences $\left\{ |\Cat{n}{-1}| \right\}_{n=1}^\infty$ \cite[A126120]{OEIS} and $\left\{ |\AltCat{n}{-1}| \right\}_{n=1}^\infty$ \cite[A138364]{OEIS}.

\item[(iv)] A polynomial $f(x)$ of degree $n$ is \emph{palindromic} if $f(x) = x^n f(x^{-1})$, so that its coefficients are the same read forwards and backwards.   It follows from  (\ref{alt-eq}), Proposition \ref{preHe-formulas}, and the formulas above that 
the polynomials $\Cat{n}{x}$, $\Del{n}{x}$, $\AltCat{n}{x}$, and $\AltDel{n}{x}$ are all palindromic.

\end{enumerate}
\end{remarks}

\end{document}